\documentclass{article}
\usepackage[utf8]{inputenc}
\usepackage[english]{babel}
\usepackage{amsmath, amsfonts, amssymb}
\usepackage{enumitem}
\usepackage{stmaryrd}
\usepackage{graphicx}
\usepackage{color}
\usepackage{dsfont}
\usepackage[all]{xy}
\usepackage{csquotes}

\usepackage{geometry}
\usepackage{amsthm}
\usepackage{fullpage}
\theoremstyle{plain}
\usepackage{subcaption}
\usepackage{hyperref}

\usepackage{appendix}

\usepackage{nccmath}

\selectlanguage{english}

\title{Cutting directed ribbon graphs and recursion for volumes of the combinatorial moduli spaces.}

\begin{document}
\newtheorem{conj}{Conjecture}
\newtheorem{Def}{Definition}
\newtheorem{prop}{Proposition}
\newtheorem{thm}{Theorem}
\newtheorem{lem}{Lemma}
\newtheorem{cor}{Corollary}
\newtheorem{rem}{Remark}
\newtheorem{question}{Question}

\newcommand\MMS{\text{Mirzakhani-McShane}}
\newcommand\vol{\text{vol}}
\newcommand\Lc{\mathcal{L}}
\newcommand\Br{\text{Br}}

\newcommand\tb{\mathbf{t}}
\newcommand\qb{\mathbf{q}}
\newcommand\rk{\text{rk}}
\newcommand\End{\text{End}}
\newcommand\Res{\text{Res}}
\newcommand\C{\mathbb{C}}
\newcommand\Sf{\mathfrak{S}}
\newcommand\R{\mathbb{R}}
\newcommand\Rp{\mathbb{R}_{\ge 0}}
\newcommand\Rpp{\mathbb{R}_{>0}}
\newcommand\Q{\mathbb{Q}}
\newcommand\Z{\mathbb{Z}}
\newcommand\N{\mathbb{N}}
\newcommand\Npp{\mathbb{N}_{>0}}
\newcommand\T{\mathcal{T}}
\newcommand\sign{\text{signe}}
\newcommand\Aut{\text{Aut}}
\newcommand\Autt{\widetilde{\text{Aut}}}
\newcommand\Sym{\text{Sym}}
\newcommand\Stab{\text{Stab}}
\newcommand\Ker{\text{Ker}}
\newcommand\Proj{\text{Proj}}
\newcommand{\Pc}{\mathcal{P}}
\newcommand\Hom{\text{Hom}}
\newcommand\re{\text{Re}}
\newcommand\im{\text{Im}}
\newcommand\db{\overline{d}}

\newcommand\Tc{\mathcal{T}^{comb}}
\newcommand\Tcs{\mathcal{T}^{comb,*}}
\newcommand\Tcb{\overline{\mathcal{T}}^{comb}}
\newcommand\M{\mathcal{M}}
\newcommand\Mb{\overline{\mathcal{M}}}
\newcommand\Mc{\mathcal{M}^{comb}}
\newcommand\CM{\mathcal{CM}}
\newcommand\CMb{\overline{\mathcal{CM}}}
\newcommand\Mcs{\mathcal{M}^{comb,*}}
\newcommand\Mcb{\overline{\mathcal{M}}^{comb}}
\newcommand\HT{\mathcal{HT}}
\newcommand\Hc{\mathcal{H}}

\newcommand\Zo{Z^{\circ}}
\newcommand\lambdao{\lambda^{\circ}}
\newcommand\pr{\text{pr}}
\newcommand\Kb{\mathbf{K}}
\newcommand\phid{\overline{\phi}}
\newcommand\phib{\phi^{\bullet}}
\newcommand\Kd{\overline{K}}
\newcommand\Kbull{K^{\bullet}}
\newcommand\Kbd{\overline{\mathbf{K}}}
\newcommand\uf{\mathfrak{u}}
\newcommand\s{\mathfrak{s}}
\newcommand\lf{\mathfrak{l}}
\newcommand\cf{\mathfrak{c}}
\newcommand\alphab{\boldsymbol{\alpha}}
\newcommand\Gb{\mathbf{G}}
\newcommand\Ada{\overline{\mathcal{A}}}
\newcommand\Aba{\mathcal{A}^{\bullet}}
\newcommand\Mba{\mathcal{M}^{\bullet}}
\newcommand\Mda{\overline{\mathcal{M}}}
\newcommand\Dh{\hat{\mathcal{D}}}
\newcommand\eb{\mathbf{e}}
\newcommand\ez{\eb_{\emptyset}}
\newcommand\Gbull{G^{\bullet}}
\newcommand\Zbull{Z^{\bullet}}
\newcommand\Zd{\overline{Z}}
\newcommand\Rot{\tilde{R}^{\circ}}
\newcommand\W{\mathcal{W}}
\newcommand\Hb{\mathbf{H}}

\newcommand\topo{\textbf{top}}
\newcommand\topod{\overline{\textbf{top}}}

\newcommand\topob{\textbf{top}^{\bullet}}
\newcommand\bord{\textbf{bord}}
\newcommand\bords{\textbf{bord}^{\s}}
\newcommand\bordc{\textbf{bord}^{\cf}}
\newcommand\bordbc{\textbf{bord}^{\bullet,\cf}}
\newcommand\bordsc{\textbf{bord}^{\s,\cf}}
\newcommand\bordl{\textbf{bord}_{\lf}}
\newcommand\bordb{\textbf{bord}^{\bullet}}
\newcommand\bordbl{\textbf{bord}^{\bullet}_{\lf}}
\newcommand\bordo{\textbf{bord}^{\circ}}
\newcommand\borddo{\overline{\textbf{bord}}^{\circ}}
\newcommand\bordd{\overline{\textbf{bord}}}
\newcommand\bordoc{\textbf{bord}^{\circ,\cf}}
\newcommand\bordos{\textbf{bord}^{\circ,\s}}
\newcommand\bordosc{\textbf{bord}^{\circ,\s,\cf}}
\newcommand\bordol{\textbf{bord}^{\circ}_{\lf}}
\newcommand\bordobl{\textbf{bord}^{\circ,\bullet}_{\lf}}
\newcommand\bordob{\textbf{bord}^{\circ,\bullet}}
\newcommand\borddob{\overline{\textbf{bord}}^{\circ,\bullet}}
\newcommand\borddb{\overline{\textbf{bord}}^{\bullet}}
\newcommand\stab{\textbf{stab}}
\newcommand\stabo{\textbf{stab}^{\circ}}
\newcommand\stabol{\textbf{stab}^{\circ}_{\lf}}
\newcommand\acyclod{\overline{\textbf{acycl}}}
\newcommand\acyclodmax{\overline{\textbf{acycl}}^*}
\newcommand\acyclol{\textbf{acycl}_{\lf}}
\newcommand\acyclogen{\textbf{acycl}^{*}}
\newcommand\acyclolp{\textbf{acycl}_{\lf,+}}
\newcommand\acyclo{\textbf{acycl}}
\newcommand\acyclos{\textbf{acycl}^*}
\newcommand\acyclomax{\textbf{acycl}}

\newcommand\Cyl{\mathfrak{C}}
\newcommand\Mo{M^{\circ}}
\newcommand\Mol{M^{\circ}_{\lf}}
\newcommand\Mdo{\overline{M}^{\circ}}
\newcommand\Md{\overline{M}}
\newcommand\Go{\mathcal{G}^{\circ}}
\newcommand\Gol{\mathcal{G}^{\circ}_{\lf}}
\newcommand\Gdo{\overline{\mathcal{G}}^{\circ}}
\newcommand\To{\mathcal{T}^{\circ}}
\newcommand\G{\mathcal{G}}
\newcommand\Gao{\Gamma^{\circ}}
\newcommand\Gado{\overline{\Gamma}^{\circ}}

\newcommand\Mod{\text{Mod}}
\newcommand\Irr{\text{Irr}}
\newcommand\irr{\text{irr}}


\newcommand\A{\mathcal{A}}
\newcommand\MA{\mathcal{MA}}
\newcommand\MAr{\mathcal{MA}_{\R}}
\newcommand\MAor{\mathcal{MA}_{\R}^0}
\newcommand\MAo{\mathcal{MA}^0}
\newcommand\MAz{\mathcal{MA}_{\Z}}
\newcommand\AC{\mathcal{AC}}
\newcommand\MS{\mathcal{MS}}
\newcommand\MSt{\widetilde{\mathcal{MS}}}
\newcommand\MStr{\widetilde{\mathcal{MS}}_{\R}}
\newcommand\MSr{\mathcal{MS}_{\R}}
\newcommand\MSz{\mathcal{MS}_{\Z}}
\newcommand\MF{\mathcal{MF}}
\newcommand\MFt{\widetilde{\mathcal{MF}}}
\newcommand\PMF{\mathcal{PMF}}
\newcommand\Si{\mathcal{S}}
\newcommand\Sit{\widetilde{\mathcal{S}}}
\newcommand\QT{\mathcal{QT}}
\newcommand\Zcal{\mathcal{Z}}
\newcommand\QM{\mathcal{QM}}
\newcommand\Qq{\mathcal{Q}}
\newcommand\St{\text{St}}
\newcommand\st{\text{st}}
\newcommand\h{\mathfrak{h}}


\newcommand\Rib{\text{Rib}}
\newcommand\Ribb{\overline{\text{Rib}}}
\newcommand\Ro{R^{\circ}}
\newcommand\So{S^{\circ}}
\newcommand\Rto{\tilde{R}^{\circ}}
\newcommand\Rt{\tilde{R}}
\newcommand\Sc{\mathbb{S}}
\newcommand\Prj{\mathbb{P}}
\newcommand\Met{\text{Met}}
\newcommand\Metb{\overline{\text{Met}}}
\newcommand\cut{\text{cut}}
\newcommand\rib{\text{rib}}
\newcommand\ribb{\overline{\text{rib}}}
\newcommand\Gad{\overline{\Gamma}}

\newcommand\Lbord{L_{\partial}}
\newcommand\Sub{\text{Sub}}
\newcommand\sub{\text{sub}}
\newcommand\stabdo{\overline{\text{stab}}^{\circ}}
\newcommand\subd{\overline{\text{sub}}}
\newcommand\Subd{\overline{\text{Sub}}}
\newcommand\D{\mathcal{D}}
\newcommand\Do{\mathcal{D}^{\circ}}
\newcommand\Dd{\overline{\mathcal{D}}}
\newcommand\Ddo{\overline{\mathcal{D}}^{\circ}}
\newcommand\Nc{\mathcal{N}}
\newcommand\No{\mathcal{N}}
\newcommand\Ndo{\overline{\mathcal{N}}^\circ}
\newcommand\Loop{\text{loop}}

\newcommand\Wall{\mathbf{Wall}}
\newcommand\Lambdab{\overline{\Lambda}}
\newcommand\Vmv{\vartheta}
\newcommand\Vmvo{\vartheta^{\circ}}
\newcommand\Vmvbo{\bar{\vartheta}^{\circ}}
\newcommand\Det{\text{Det}}
\newcommand\Td{\overline{\mathcal{T}}}
\newcommand\muMV{d\mu^{MV}}

\newcommand\Vo{V^{\circ}}
\newcommand\Qc{\mathcal{Q}}
\newcommand\Bil{\text{Bi}}
\newcommand\Vc{V^{comb}}

\newcommand\sle{\text{SLE}}

\newcommand{\keywords}[1]
{
  \small	
  \textbf{\textit{Keywords---}} #1
}
\author{Simon Barazer}
\maketitle

\begin{figure}[h!]
    \centering
    \includegraphics[width=0.3\linewidth]{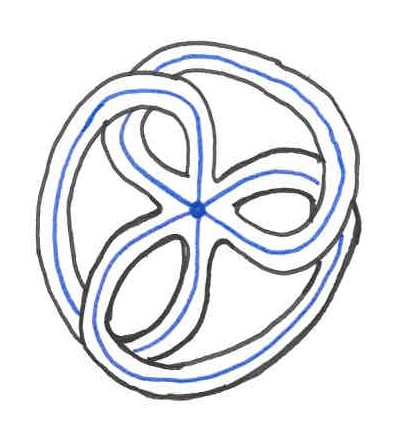}
    
\end{figure}
\begin{abstract}
In this paper we study metric ribbon graphs, in particular, directed metric ribbon graphs. These ribbon graphs are dual to bipartite maps and appear in the context of Abelian differentials. We prove that it is possible to decompose a directed ribbon graph into a family of ribbon graphs with one vertex, by performing surgeries along appropriate multi-curves. The decomposition is canonical and we call it \textit{acyclic decomposition} due to a condition on the stable graphs that encode the surgeries. This result provides a recursion scheme for volumes of moduli space of directed metric ribbon graphs, we give explicitly the recursion in the case of $4-$valent metric ribbon graphs. In a particular case, we give applications to count of Grothendieck \textit{Dessins d'enfant}.
 \end{abstract}

\keywords{Metric ribbon graphs, geometric recursion, bipartite maps, measurable foliations, dessins d'enfants.}

\newpage
\tableofcontents

\newpage

\section{Introduction:}
The study of ribbon graphs started more than thirty years ago, these objects received more attention with their use in the proof of Witten conjecture \cite{witten1990two} by M. Kontsevich in \cite{kontsevich1992intersection}. Ribbon graphs are graphs embedded in surfaces with boundaries, and the surface retracts on the graph. They have different names in different fields, they are also called fatgraphs and maps in combinatorics. We give a self-contained definition in Section \ref{section_ribbon}; but the reader could also uses \cite{kontsevich1992intersection}, \cite{lando2004graphs} or \cite{zvonkine2002strebel}. For each $(g,n)$, we denote $\Mc_{g,n}$ the combinatorial moduli space of metric ribbon graphs of genus $g$ and with $n$ boundary components. Since the works of K. Strebel \cite{strebel1984quadratic}, we know that $\Mc_{g,n}$  is a cell complex that is homeomorphic to $\M_{g,n}\times \Rpp^n$, with $\M_{g,n}$ the moduli space of marked Riemann surfaces. There is a function $\Lbord : \Mc_{g,n}\to \Rp^n$ that associates to a metric ribbon graph the length of its boundary components. In \cite{kontsevich1992intersection}, M. Kontsevich computes the volume $V_{g,n}(L)$ of the level set $\M_{g,n}(L)=\Lbord^{-1}(L)$ for its natural volume form, proving that these functions are polynomials in $L$ with coefficients given by the intersection theory of $\Mb_{g,n}$. The polynomials $V_{g,n}(L)$ appear in different contexts; in \cite{delecroix2021masur}, the authors use them to express the Masur-Veech volumes of the principal strata in moduli spaces of quadratic differentials. In \cite{andersen2023topological} a recursion was obtained for Masur-Veech volumes by using Masur-Veech polynomials. To compute volumes of other strata in moduli spaces of quadratic and Abelian differentials, we could use volumes of substrata in moduli spaces of metric ribbon graphs. This paper is a step in this direction; we study directed ribbon graphs\footnote{We remark that we prefer the terminology directed versus oriented because oriented refers to the orientation of topological space, and unorientable ribbon graphs can exist, which leads to confusion.}. Volumes of moduli spaces directed metric ribbon graphs can be used to express Masur-Veech volumes of various strata in moduli spaces of Abelian differentials. We give in this paper a procedure to compute volumes of moduli space of directed ribbon graphs inductively by finding nice decompositions of the ribbon graphs. Completed with the  cases of directed ribbon graphs with only one vertex (which have been studied in detail in \cite{yakovlev2022contribution} by I. Yakovlev, who obtained impressive results), our results allow us to compute all the possible volumes of moduli spaces of directed metric ribbon graphs.

\paragraph{Directed ribbon graphs, moduli spaces, and their volumes:}
 Using surgeries on trivalent ribbon graphs, it was possible for the authors in \cite{andersen2020kontsevich} to obtain a Mirzakhani Mac-Shane formula for trivalent ribbon graphs and then derive a recursion for the polynomials $V_{g,n}$. It gives another proof of results in \cite{kontsevich1992intersection} and \cite{delecroix2021masur}. Nevertheless, outside this case, very little was known about volumes of sub-strata in the combinatorial moduli space. We focus on the case of directed ribbon graphs; these are ribbon graphs with a coherent orientation of the edges (see definition \ref{def_orientation_ribbon} or figure \ref{intro_fig_ribbon} for a picture); by duality, these graphs are related to bipartite maps. We use the notation $R$ for ribbon graphs and $\Ro$ for ribbon graphs with a direction (and we use the upper script $\circ$ for everything that is directed). The direction of the edges induces a direction of the boundary components; some of them are oriented according to the orientation induced by the underlying surface, and they are labeled by $+$, the others have the opposite orientation and are labeled by $-$; we call such surfaces with signs on the boundary a directed surface (we denote it $\Mo$ instead of $M$). We consider the moduli space $\Mc_{g,n^+,n^-}$ of directed metric ribbon of genus $g$ with $n^+$ positive and $n^-$ negative boundary components. This space is stratified by the order of the vertices of the ribbon graphs; we mainly focus on the principal strata that correspond to four-valents directed ribbon graphs, in these cases, our results are easier to state. For directed ribbon graphs the boundary lengths are not independent; the sum of the positive boundaries equals the sum of the negative ones. We consider the level set $\Mc_{g,n^+,n^-}(L^+|L^-)$ of directed ribbon graphs, where $L^{\pm}$ is the length of the positive/negative boundaries \footnote{These volumes correspond to principal strata, which are dense open set in the moduli spaces of directed ribbon graphs. We also study cases of sub-strata's but in general the volumes are not continuous.}. There is a natural volume form on this space, and we consider the volumes as a function $V_{g,n^+,n^-}(L^+|L^-)$ of two sets variables; we prove the following result (Theorem  \ref{thm_polynomial_Vgnn} in the text).

\begin{thm}
\label{into_thm_polynomial_Vgnn}
    The functions $V_{g,n^+,n^-}(L^+|L^-)$ are continuous, homogeneous, piecewise polynomials of degree $4g-3+n^++n^-$ defined on $\Lambda_{n^+,n^-}$.
\end{thm}
The set $\Lambda_{n^+,n^-}$ is given by the condition $\sum_i L_i^+=\sum_i L_i^-$. It is possible to be more precise on the wall structure of the functions $V_{g,n^+,n^-}(L^+|L^-)$, it is similar to the ones of double Hurwitz numbers studied in \cite{hahn2018wallcrossing}. 
\begin{figure}
    \centering
    \includegraphics[width=0.3\linewidth]{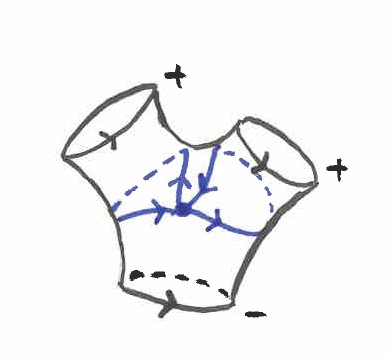}
    \caption{A directed ribbon graph on a directed surface of type $(0,2,1)$ }
    \label{intro_fig_ribbon}
\end{figure}

\paragraph{Surgeries on directed ribbon graphs and acyclic decompositions:}
We investigate, in some sense, the geometry
of metric ribbon graphs and will use ideas that are close to the ones of \cite{andersen2020kontsevich} and study curves on ribbon graphs in
order to decompose them in simple pieces. To perform surgeries on ribbon graphs that are not trivalent, we  introduce in Section \ref{subsection_admissible}, Definition \ref{def_admissible_fol}, the notion of admissible multi-curves
on a ribbon graph. This is a particular subset of homotopy classes of multi-curves on which it is possible to define the
twist flow and perform surgeries. Admissible multi-curves are the ones that do not split a vertex when we cut the ribbon graph along them. We can complete these spaces of admissible curves by considering admissible foliations. We study the spaces of admissible foliations on any ribbon graph in Section \ref{subsection_admissible} and we provide coordinates in Proposition \ref{prop_coord_x}. When we cut a directed ribbon graph $\Ro$ along an admissible multi-curve $\Gamma$ the result is a family of directed ribbon graphs $\Ro_{\Gamma}$, each of these graphs has signs on its boundary components, and two boundary components glued along a curve must have opposite signs. Then, when we perform surgeries on directed ribbon graphs, stable graphs that encode the surgeries have an additional structure; they are directed stable graphs (see Definition \ref{def_directed_stable_graph} or figure \ref{intro_directed_stab}) in the sense that the edges of the graphs are directed. In the sequel, acyclic directed stable graphs will play an important role.
\begin{figure}[h!]
    \centering
    \includegraphics[width=0.3\linewidth]{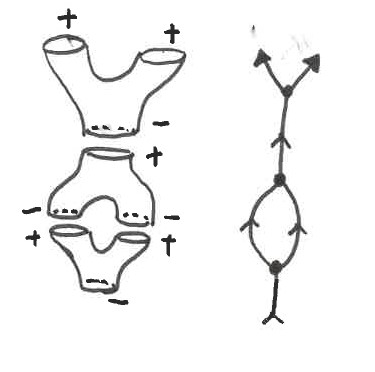}
    \caption{A directed acyclic stable graph.}
    \label{intro_directed_stab}
\end{figure}
By studying admissible multi-curves, we prove that: if we fix a directed ribbon graph $\Ro$ (with at least two vertices) and a vertex $v$ of $\Ro$, we can find an admissible curve that extracts this vertex. In other words, after surgeries along the multi-curve there is a component of the graph that contains $v$ and no other vertices. Moreover, with some assumptions, the multi-curve is unique. The detailed statement is given below and corresponds to Theorem \ref{thm_acycl_curve} in the text: 

\begin{thm}
\label{intro_thm_acycl_curve}
Let $\Ro$ be a directed ribbon graph with at least two vertices. For each vertex $v$, there exists a unique admissible primitive multi-curve $\Gamma_v^{+}$ such that:
\begin{itemize}
\item The stable graph $\Go_v$ of $\Gamma_v^+$ contains a component $c_v$ that spares $v$ from the rest of the graph.
\item All the curves in $\Gamma_v^+$ are boundaries of $c_v$.
\item $c_v$ is glued along negative boundaries only.
\end{itemize}
\end{thm}

\begin{rem}
Theorem \ref{intro_thm_acycl_curve} is quite surprising; it means that the local structure of the graph around an even vertex admits some model that corresponds to a minimal ribbon graph. Moreover, it is possible to cut the graph around the vertex in a canonical way. This process allows us to recover the structure of the graph inductively on the number of vertices. The recursion scheme is very similar to a directed version of the topological recursion, as we will see later.
\end{rem}

To find the curve, we use the two-form on each stratum of metric ribbon graphs; it comes from the intersection
pairing in cohomology and was first used by M. Kontsevich in \cite{kontsevich1992intersection} (see Section \ref{subsection_symplectic}). When there is a vertex of even degree, this form can degenerate. We identify in Proposition \ref{prop_coord_x} the space of admissible foliation to the tangent space of a stratum of the moduli space. The admissible curves that are in the kernel of the two-form enjoy many interesting properties and are the ones that are used to decompose the surface in Theorem \ref{intro_thm_acycl_curve}. We prove Theorem \ref{intro_thm_acycl_curve} in Section \ref{subsection_acyclic_decomposition} and we use it to obtain a recursion for the volumes $V_{g,n^+,n^-}$. We can also iterate Theorem \ref{intro_thm_acycl_curve}, if we fix an enumeration of the vertices in a directed ribbon graph, we can remove these vertices one by one and we obtain a multi-curve. This multi-curve decomposes the ribbon graph into graphs with only one vertex. We call such graphs minimal graphs. As we mention, this multi-curve is associated to a directed stable graph, and the structure of the gluing's implies that this directed stable graph is acyclic. Moreover, the enumeration of the vertices induces an enumeration of the components of the stable graph which is compatible with the order relation induced by the acyclic structure. Such enumeration is called a linear order in the context of acyclic directed graphs.

\begin{thm}
\label{intro_thm_acycl_decomp}
Let $\Ro$ be a directed ribbon graph with a linear order; then there is a unique admissible primitive multi-curve $\Gamma$ such that
\begin{enumerate}
\item The components of $\Ro_{\Gamma}$ are minimal.
\item The directed stable graph $\Go$ associated to $\Gamma$ is acyclic.
\item The linear order on the vertices induces a linear order on the acyclic stable graph.
\end{enumerate}
\end{thm}

We call such decomposition an acyclic decomposition, what is surprising is the uniqueness. The acyclicity is essential, otherwise, if we remove this condition, there is an infinite number of decomposition  in Theorems \ref{intro_thm_acycl_curve} and \ref{intro_thm_acycl_decomp}. We give in figure \ref{intro_fig_acyclic} an example of acyclic decomposition. It appears that similar structures can be observed in other context of enumerative geometry, such as double Hurwitz numbers. We see in a forthcoming paper that acyclic decomposition is related to Cut-and-Join equations. We use the acyclic decomposition to prove Theorem \ref{into_thm_polynomial_Vgnn} by studying the contribution of each acyclic stable graph, by using Theorem \ref{thm_polynomial_ZGo} and Proposition \ref{prop_graph_expension}.

\begin{figure}[h!]
    \centering
    \includegraphics[width=0.8\linewidth]{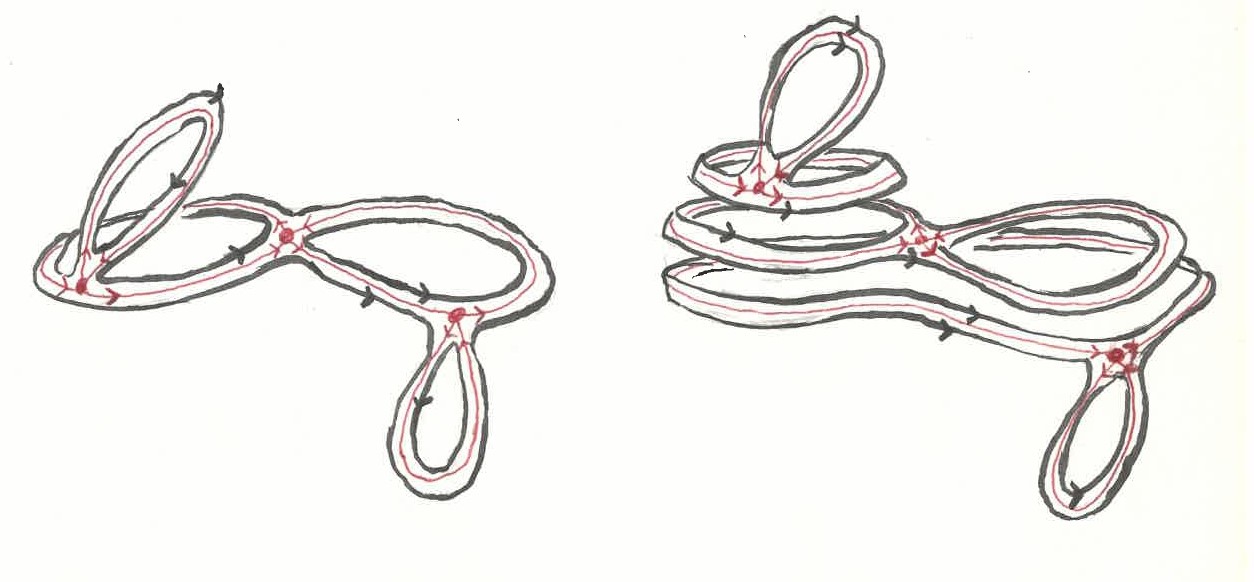}
    \caption{An acyclic decomposition.}
    \label{intro_fig_acyclic}
\end{figure}
\paragraph{Recursion for the volumes:}  By using Theorem \ref{intro_thm_acycl_curve} and techniques that were introduced by M. Mirzakhani
\cite{mirzakhani2007simple} and adapted in Proposition \ref{prop_integral_formula_2}, we can perform surgeries over moduli spaces of directed metric ribbon graphs, and this gives a recursion for the
volumes $V_{g,n^+,n^-}(L^+|L^-)$ of the principal stratum of directed metric ribbon graphs. The structure of the recursion is in some sense similar to the recursion of \cite{mirzakhani2007simple},\cite{andersen2017geometric} and \cite{andersen2020kontsevich} but we need
to take into account signs of boundary components. In the case of four-valents directed ribbon graphs, the recursion corresponds to removing a pair of pants that is glued along
its negative boundary components. In this case, the gluing's that we consider are the following (see figure \ref{intro_fig_pant_decomp})
\begin{enumerate}
    \item We  remove a pants that contains two positive boundaries $(i, j)$,
    \item We remove a pants that contains a positive boundary $i$ and a negative boundary $j$.
    \item We remove a pants with one positive boundary $i$ that is connected to the surface by two negative boundaries and does not separate the surface.
\item We remove a pants with one positive boundary $i$ that is connected to the surface by two negative boundaries
and separates the surface into two components.
\end{enumerate}
Then we obtain the following theorem, $[]_+$ means the positive part.

\begin{thm}
\label{intro_thm_recurrence_1}
For all values of the boundary lengths $L=(L^+,L^-)$, the volumes satisfy the recursion:
\begin{eqnarray*}
(2g-2+n)V_{g,n^+,n^-}(L^+|L^-)&=& 
\frac{1}{2}\sum_{i\neq j} (L_i^+~+L_j^+)~V_{g,n^+-1,n^-}(L_i^+ +L_j^+,L^+_{\{i,j\}^c}|L^-)\\
&+&\sum_{i}\sum_{j} [L_i^+-L_j^-]_+~V_{g,n^+,n^--1}([L_i^+-L_j^-]_+,L^+_{\{i\}^c}|L^-_{\{j\}^c})\\
&+&\frac{1}{2}\sum_{i} \int_0^{L_i^+} V_{g-1,n^++1,n^-}(x,L_i^+-x,L^+_{\{i\}^c}|L^-)~x(L_i^+-x)~ dx\\
&+&\frac{1}{2}\sum_{i}\sum_{\underset{I_1^{\pm}\sqcup I_2^{\pm}=I^{\pm}}{g_1+g_2=g}}^{'} x_1 x_2 V_{g_1,n_1^++1,n^-_1}(x_1,L^+_{I_1^+}|L^-_{I_1^-})~ V_{g_2,n_2^++1,n^-_2}(x_2,L^+_{I_2^+}|L^-_{I_2^-});
\end{eqnarray*}

where we use the notation\footnote{For $L\in \R^{n}$ and $I\subset \{1,...,n\}$ we denote $L_I=(L_i)_{i\in I}\in \R^I$.}
\begin{equation*}
x_l = \sum_{i\in I^{-}_l}L_i^{-}-\sum_{i\in I^{+}_l}L_i^{+}.
\end{equation*}
\end{thm}
Theorem \ref{intro_thm_recurrence_1} allow to
compute the volumes inductively by a recursion scheme by using the initial conditions 
\begin{equation*}
    V_{0,2,1}(L_1^+,L_2^+|L_1^-)=1 ~~~~~~~V_{0,1,2}(L_+^1|L_1^-,L_2^-)=1.
\end{equation*}
We do not need to enumerate the ribbon graphs and it is an
efficient way to compute the volumes for low values of $g$ and $n^\pm$. 

\begin{figure}
    \centering
    \includegraphics[width=0.5\linewidth]{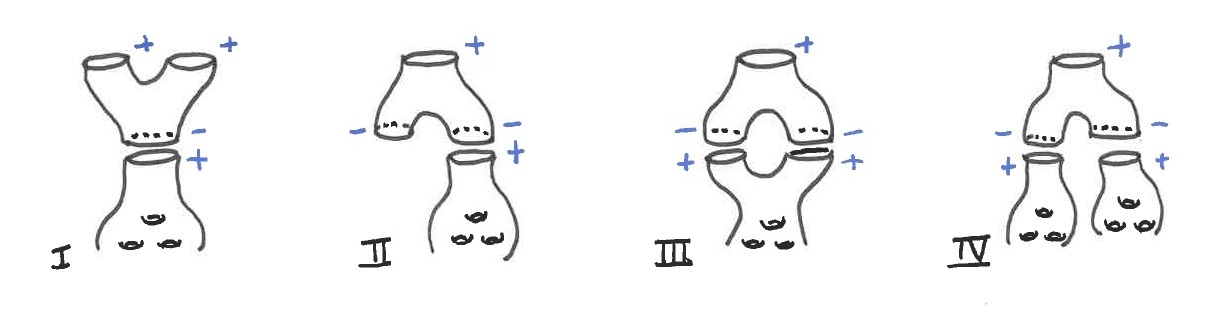}
    \caption{The four terms of the recursion in theorem \ref{intro_thm_acycl_curve}}
    \label{intro_fig_pant_decomp}
\end{figure}


\paragraph{Special case, Cut-and-Join equation and Grothendieck dessins d’enfants:} There is one case where
the piecewise polynomials $V_{g,n^+,n^-}$ and the recurrence of Theorem \ref{intro_thm_recurrence_1} are particularly simple. It corresponds to $n^-=1$, i.e., directed ribbon graphs with only one negative boundary component. In this case, we can write the volumes as functions $\Vo_{g,n}(L)$ of only the positive boundaries. The subfamily of surfaces with one negative boundary is preserved by the recursion, and the only allowed gluings are the ones of type $I$ and $III$. Moreover, the fact that there is only one negative boundary doesn’t provide enough ”space” for piece-wise polynomiality (see Theorem \ref{thm_recursion_1_bord}).

\begin{thm}
\label{intro_thm_recursion_1_bord}
Functions $\Vo_{g,n}$ are homogeneous polynomials of degree $4g-2+n$ and satisfy the following recursion:
\begin{eqnarray*}
(2g+n-1)\Vo_{g,n}(L)&=& \frac{1}{2}\sum_{i\neq j} (L_i~+L_j)~\Vo_{g,n-1}(L_i +L_j,L_{\{i,j\}^c})\\
&+&\frac{1}{2}\sum_{i} \int_0^{l_i} \Vo_{g-1,n+1}(x,L_i-x,L_{\{i\}^c})~x(L_i-x)~ dx
\end{eqnarray*}
\end{thm}

This recursion leads to a recursion for the coefficients of the polynomials. These are symmetric, and then can
be indexed by partitions $\mu = (0^{\mu(0)},1^{\mu(1)},...)$, we denote them $(c(\mu))_{\mu}$. We can form a generating function $\Zo$ given by
\begin{equation*}
\Zo(q,\tb)=\sum_\mu \frac{q^{\frac{d(\mu)+n(\mu)}{2}}\prod_i (i!) ^{\mu(i)}t_i^{\mu(i)}}{\prod_i \mu(i)!} c(\mu).
\end{equation*}
From Theorem \ref{intro_thm_recursion_1_bord}, we obtain the following result (see Theorem \ref{cor_cutandjoin_1_bord} in the text):
\begin{cor}
\label{intro_cor_cutandjoin_1_bord}
The series $\Zo(q,\tb)$ satisfies the following equation:
\begin{equation*}
\frac{\partial \Zo}{\partial q} = \frac{1}{2}\sum_{i,j}(i+j)t_it_j \partial_{i+j-1} \Zo+ \frac{1}{2}\sum_{i,j}(i+1)(j+1)t_{i+j+3} \partial_i\partial_j \Zo + \frac{t_0^2}{2},
\end{equation*}
with $\Zo(0,\tb)=0 $.
\end{cor}
The last equation is a Cut-and-Join equation; these equations appear at different places in enumerative geometry, such as enumeration of ramified covering over the sphere. We relate coefficients $c(\mu)$ to the count of a special kind of Grothendieck "dessins d'enfants".

\paragraph{Acknowledgment:} The author thanks several colleagues and friends for their support during this work. First of all I am grateful to my advisors, M. Kontsevich and A. Zorich for their support during my thesis and also the IHES for the hospitality. I am also gratefull to M. Liu, E. Goujard, V. Delecroix and Y. Yakovlev for their interest and their help. I want also thanks the jury of my thesis, G. Borot, J.E. Anderson, B. Eynard, D. Zvonkine, E. Garcia Faldes and S. Lelievre, for their encouragement and criticism. I am also grateful to many researchers from the IHES, IMO, and other members of the ERC ReNewQuantum, such as V. Fantini, I. Ren, A. Takeda, A. Giacchetto, R. Santaroubane, D. Lewansky...

\paragraph{Plan of the paper:}

\begin{enumerate}
    \item In section $1$, we recall definitions and results on multi-curves, foliations and quadratic differentials. We extend these results to surfaces with boundaries and discuss the relations with multi-arcs. We also introduce directed surfaces, directed multi-curves, and directed stable graphs; they play a central role in the text. Most of the material can be found in \cite{fathi2012thurston}, \cite{farb2011primer} or \cite{andersen2020kontsevich}; but we adapt it for our purposes.
    \item In section $2$, we study directed stable graphs in more detail and, more precisely, acyclic stable graphs. They are essential in Theorem \ref{intro_thm_acycl_decomp}. We also prove piece-wise polynomiality results that will be used to prove Theorem \ref{into_thm_polynomial_Vgnn}.
    \item In section $3$, we discuss ribbon graphs, we define directed ribbon graphs, and we study their properties. We also introduce cohomology of ribbon graphs, which allows us to symplifies some results. We recall the construction of the combinatorial moduli spaces, their stratifications and we define their volumes. We also recall the basic relationships between ribbon graph multi-arcs and foliations that can also be found in \cite{andersen2020kontsevich} or \cite{arbarello2011geometry}.
    \item In section $4$, we study curves on ribbon graphs, surgeries, and define admissible curves. In Proposition \ref{prop_coord_x}, we give coordinates for admissible foliations and admissible integral multi-curves. We finally study the properties of the twist flow allong an admissible foliation and relate it to the horocyclic flow for quadratic differentials.
    \item  In the section $5$, we prove the vertex surgery theorem (Theorem \ref{intro_thm_acycl_curve}) and the acyclic decomposition (Theorem \ref{intro_thm_acycl_decomp}). To do this, we define the Kontsevich two-form using the cohomology of ribbon graphs and we study degenerations of this form. We also define irreducible and minimal ribbon graphs and study their properties.
    \item In section $6$, we derive Theorems \ref{intro_thm_recurrence_1} and \ref{into_thm_polynomial_Vgnn} by using the results of the preceding sections and an integration formula over the combinatorial moduli space.
    \item In the last section, we study the special cases of graphs with one negative boundary component, relate it to Grothendieck dessins d'enfants, and we derive the Cut-and-Join equation (Theorem \ref{intro_cor_cutandjoin_1_bord}).
\end{enumerate}

\newpage

\section{Topological background and notations:}
\label{section_background}

\subsection{Surfaces and curves}

\paragraph{Surfaces, directed surfaces and decorations:}
\label{paragraph_surface_background}
We consider compact, oriented, topological surfaces $M$, with boundary. The boundary of $M$ is a finite union of circles; by abuse of notation, we denote $\partial M$ the set of boundary components $\pi_0(\partial M)$. A connected, oriented, compact surface is  characterized, up to homeomorphism by a pair 
$(g,n)$. With $g$, his genus and $n$ is the number of boundary components. We denote, $d_{g,n}=2g-2+n$
the opposite of the Euler characteristic. The mapping class group $\Mod(M)$ is the group of isotopy classes of homeomorphisms of $M$ that fix each boundary component. We denote $\bordc$ the category of connected, stable surfaces with boundaries (i.e., $n>0$),  $\bord$ corresponds to the categories of non-connected surfaces such that each connected component is in $\bordc$. A surface of first importance in $\bordc$ is the pair of pants; it corresponds to a sphere with three boundary components, i.e., of type $(0,3)$. If $M\in \bord$ and $c\in \pi_0(M)$, we denote $M(c)\in \bordc$ the corresponding surface and write 
\begin{equation*}
    M=\sqcup_{c}~M(c).
\end{equation*}
We give the following definition:
\begin{Def}
\label{def_dir_surfaces}
   A directed surface $\Mo$ is a pair $(M,\epsilon)$, with $M\in \bord$ and
\begin{equation*}
    \epsilon : \partial M \longrightarrow \{\pm 1\}
\end{equation*}
is a map, non-constant on each connected component of $M$.
\end{Def}
As before, we use the notation $\bordo,\bordoc...$ for different categories of stable directed surfaces. A directed surface $\Mo$ defines in an obvious way an usual surface $M$ by forgetting the direction. By assumption, a direction divides the boundary into two non-empty sets, $\partial^\pm\Mo$. The positive boundary components $\partial^+\Mo$ are in some sense the outputs, and the negative boundary components $\partial^-\Mo$ are the inputs of the surface. If we denote $n^{\pm}=\#\partial^\pm\Mo\in \N^*$, a connected directed surface is characterized by a triple $(g,n^+,n^-)$. In the case of directed surfaces, there are two surfaces of first importance. They are the two pairs of pants $P_+$ and $P_-$, which are respectively of type $(0,2,1)$ and $(0,1,2)$.

\begin{rem}[Orientation of the boundary components]
\label{rem_orientation_boundaries}
    A direction $\Mo$ on $M$ defines an orientation of the boundary components of $M$, as $M$ is oriented the boundary components are naturally oriented. We choose to orient the positive boundaries according to the orientation induced by $M$; and the negative boundaries, with the opposite orientation.
\end{rem}

If $\Mo$ is connected, we define $\Lambda_{\Mo}$ by
\begin{equation}
\label{formula_LambdaMo}
   \Lambda_{\Mo}=T_{\Mo}\cap \Rp^{\partial M},~~~\text{and}~~~T_{\Mo}=\left\{L=(l_\beta)\in \R^{\partial M}~|~{\sum}_\beta \epsilon(\beta)l_\beta=0\right \}.
\end{equation}
In general, if $\Mo=\sqcup_c~ \Mo(c)$ we set $\Lambda_{\Mo}=\prod_c \Lambda_{\Mo(c)}$ and define $T_{\Mo}$ similarly. 
\begin{figure}
    \centering
    \includegraphics[width=0.4\linewidth]{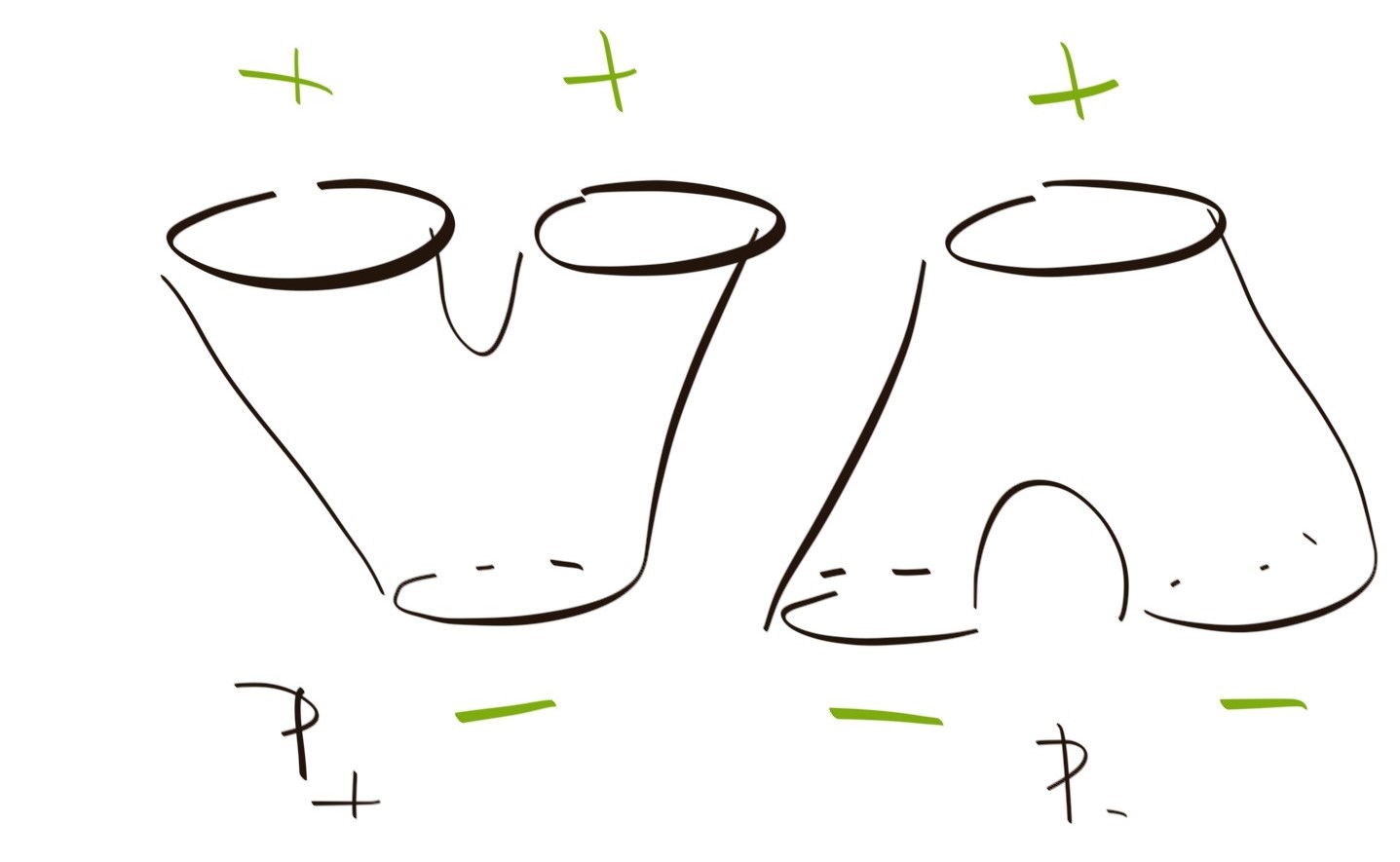}
    \caption{Directed pants of type $(0,2,1)$ and $(0,1,2)$.}
    \label{figure_directed_pants}
\end{figure}
\begin{rem}[marked points and labelling:]
\label{rem_marked_points_label}
\begin{itemize}
    \item It is sometimes convenient to add marked points to surfaces; in this case, we assume that an element of $\Mod(M)$ can permute marked points. If $M\in \bord$, we can consider the surface $M^{\bullet}$ obtained by pinching the boundary components to marked points, for $\beta\in \partial M$ we denote $\beta^{\bullet}$ the corresponding marked point in $M^{\bullet}$. We also denote $M^{cap}$  the surface obtained by gluing a disc on each boundary component of $M$. We use the upper-script $^\bullet$ for categories of surfaces with marked points; a connected surface is characterized by the triple $(g,n,m)$ where $m$ is the number of marked points; we denote $d(M)=d_{g,n,m}=2g-2+n+m$ and extend it by linearity to non-connected surfaces.
   \item A labeled surface is a surface $M\in \bord$ with a label $\lf$, which is a bijection $\lf: \partial M\to \llbracket 1 , n\rrbracket$. If the surface is directed, we choose two labels $\lf^+,\lf^-$ for the positive and negative boundary components; in this case, if the surface is connected of type $(g,n^+,n^-)$, $\Lambda_{\Mo}$ is identified  with $\Lambda_{n^{+},n^{-}}$ given in Appendix \ref{paragraph_notation_indices}.
\end{itemize}
\end{rem}

 We introduce a notation useful to define stratifications of moduli spaces. We consider "partition" $\nu$ which is a function 
 \begin{equation*}
     \nu: \frac{1}{2}\N_{>0} \to \N
 \end{equation*}
 with finite support, we use notations:
 \begin{equation*}
     d(\nu)=\sum_i i~\nu(i)~~~\text{and}~~~n(\nu)=\sum_i\nu(i).
 \end{equation*}
 There is a natural monoid structure on the set of partitions given by $(\nu_1+\nu_2)(i)=\nu_1(i)+\nu_2(i)$ and $d,n$ are both additive for this structure. A decorated surface $\Md$ is a pair $(M,\nu)$ where $M\in \bord$ and $\nu=(\nu_c)_{c\in\pi_0(M)}$ is the data of a partition for each connected component of $M$. Moreover, we impose the following constraints:
 \begin{equation*}
d(\nu_c)=d(M(c)),~~~\forall~c\in \pi_0(M).
 \end{equation*}
If $\Mo=(M,\epsilon)$ is directed, then $\Mdo=(\Mo,\nu)$ is a decorated surface if we assume that $\nu$ is supported by $\N_{>0}$. We denote $\bordd,\borddo...$ the different categories of decorated surfaces.
 \begin{rem}
 \label{rem_decoration_collapsing_et_marked_point}
 \begin{itemize}
     \item In the case of marked surfaces, for each marked point $x$ we assume that we have the additional data of an half integer $\kappa_x$. In general, we assume that $\kappa_x\ge -\frac{1}{2}$ (resp $\kappa_x\ge 0$ in the directed case), we have the constraint $d(\nu_c)+\sum_{x\in M(c)}k_x=d(M(c))$. As before, we denote $\borddb,\borddob,...$ the different categories of decorated surfaces. 
     \item We can collapse two blocs of a partition by the transformation $\nu'=\nu-(i)-(j)+(i+j)$ (if $\nu(i),\nu(j)\ge 1$), $(a)$ is the partition with a single block of size $a$, we have $d(\nu)=d(\nu')$ and $n(\nu)=n(\nu')+1$. We denote $\nu_1<\nu_2$ if it is possible to obtain $\nu_1$ by a succession of collapsing on $\nu_2$. This relation defines a partial order relation on the space of partitions.
 \end{itemize}
 \end{rem}
 
\paragraph{Curves and multi-curves:}

\label{paragraph_multi_curves}
\begin{figure}
    \centering
    \includegraphics[width=0.3\linewidth]{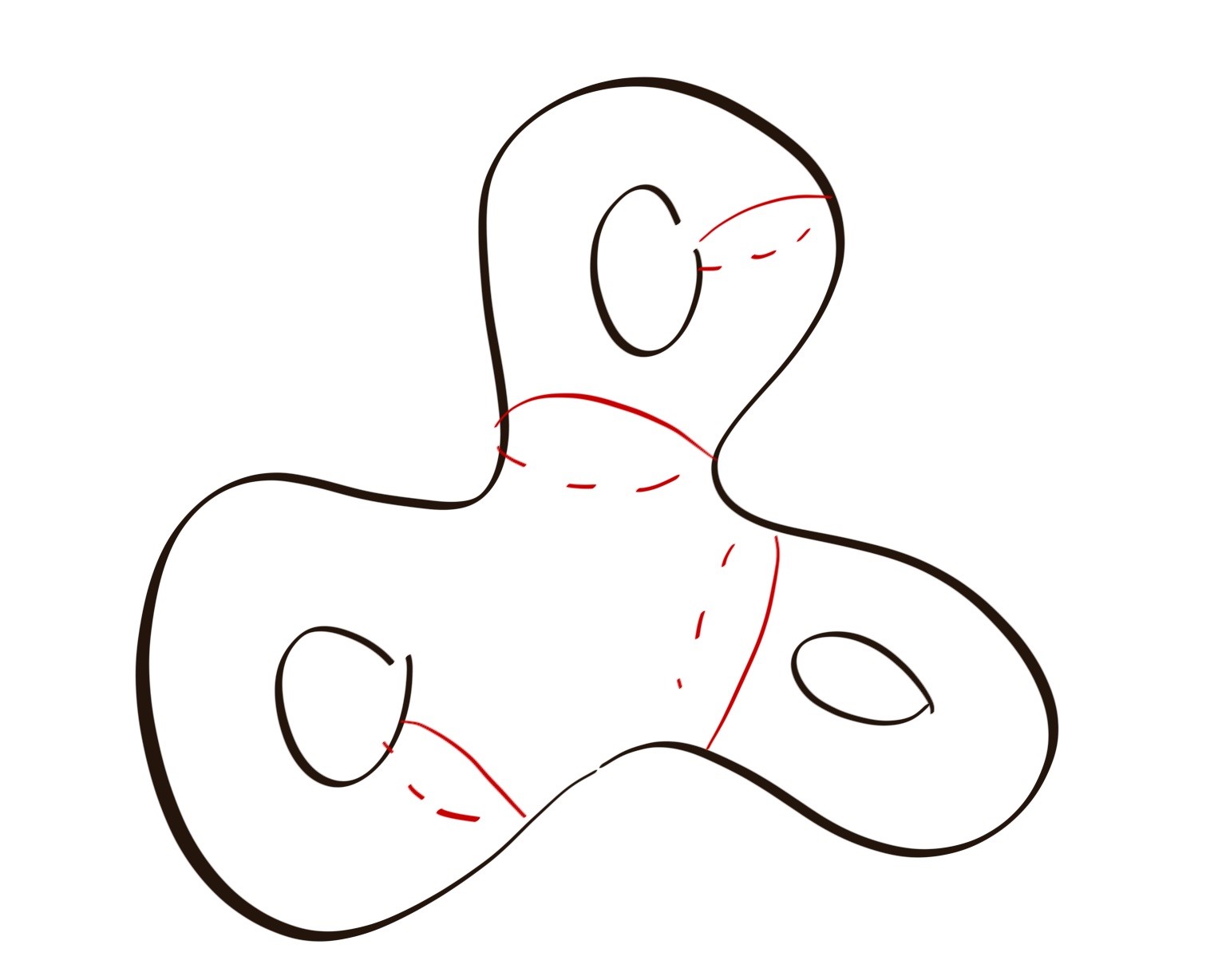}
    \caption{A multi-curve.}
    \label{figure_multi_curves}
\end{figure}

Let $M\in \bord$, an essential simple curve, is an embedded closed curve, that is non-contractible, non-oriented and it is not homotopic to a point or a boundary component of $M$. When $M$ has marked points, we assume that the curve does not pass through these points and does not retract to a marked point. We denote $\Si(M)$ the set of isotopy classes of essential simple curves \cite{fathi2012thurston}, isotopies are taken relatively to the marked points if any. If $M$ has boundary, we also denote $\Sit(M)=\Si(M)\sqcup \partial M$, the union of $\Si(M)$ and the set of boundary curves.\\

A multi-curve $\Gamma \in \MSz(M)$ is a closed family of disjoint, essential curves (see figure \ref{figure_multi_curves}). A multi-curve $\Gamma$ can be represented formally by a sum $\sum_\gamma m_\gamma \gamma$ over $\Si(M)$. Weights $(m_\gamma)$ are positive integers, and two curves in the support are non-intersecting; this last condition implies that the sum is finite and the number of curves in a multi-curve is bounded by $3g-3+n+m$ for a surface of type $(g,n,m)$. If the weights are in $\{0,1\}$, we say that the multi-curve is primitive; we denote $\MS(M)$ the subset of primitive multi-curves. In the same way, we can consider sets $\MSt(M)$ and $\MSt_{\Z}(M)$.\\

If $\Gamma\in \MS(M)$ is a primitive multi-curve on $M$, it is always possible to cut $M$ along $\Gamma$. We obtain a stable topological surface $M_\Gamma$, the procedure depends on the choice of the representative, but all the possible surfaces obtained by surgeries are canonically identified up to isotopies. Surgeries along multi-curves are encoded by stable graphs; we give a possible definition:

\begin{Def}
\label{def_stable_graphs}
A stable graph is given by $\G=(X_0\G,(\G(c))_{c\in X_0\G},s_1)$ where:
\begin{itemize}
\item $X_0\G$ is the set of vertices.
\item $(\G(c))_{c\in X_0\G}$ is a family of topologically stable surfaces with boundary.
\item If $X\G= \sqcup_c \pi_0(\partial \G(c))$ is the set of boundary components of the surfaces, then $s_1: X\G\longrightarrow X\G$ is an involution.
\end{itemize}
\end{Def}
If $X\G(c)=\partial \G(c)$, then a stable graph defines a graph $(X_0\G,X\G, (\partial \G(c))_{c\in X_0\G},s_1)$ (see Appendix \ref{paragraph_graph}). But it also contains information about the topology of the ''vertices''; it can be encoded by a genus map $g: X_0\G\rightarrow \N$ with the stability condition for each vertex of the graph. We say that two graphs are isomorphic if there is a bijection $\phi : X\G \rightarrow X\G$, which is an isomorphism of graphs and preserves the genus map. The involution $s_1$ divides $X_1\G$ into two subsets: $X^{int}_1\G$ is the set of elements of order two; $\partial\G$ is the set of fixed points. We denote by $\Aut(\G)$ the automorphism group of the stable graph, as before we assume that all the elements of $\Aut(\G)$ act trivially on the set $\partial\G$.\\

A stable graph $\G$ encodes how to glue a family of stable surfaces together. $\G$ defines a surface $M_\G$ by gluing the pairs of boundary components in $X^{int}\G$ that are in the same orbit of $s_1$, these boundaries define a multi-curve $\Gamma_\G$ on $M_\G$. Conversely, a multi-curve defines a stable graph, and two multi-curves are in the same orbit under the action of $\Mod(M)$ iff their stable graphs are isomorphic. Then, if $\st(M)$ is the set of isomorphism classes of stable graphs with an homeomorphism $M\simeq M_\G$, we have $\st(M)= \MS(M)/\Mod(M)$. Two multi-curves are in the same orbit iff they have the same topology, which is exactly the information contained in a stable graph.\\

If $\Gamma\in \MS(M)$, we can consider $\Stab(\Gamma)\subset \Mod(M)$, the stabilizer of $\Gamma$; for each $\gamma\in \Si(M)$, we denote $\delta_\gamma\in \Mod(M)$ the Dehn twist along $\gamma$ (see \cite{farb2011primer}). Let $D_\Gamma=\langle \delta_\gamma,\gamma\in \Gamma\rangle$, it is an Abelian normal subgroup of $\Stab(\Gamma)$. Moreover, we have the following exact sequence of surgery:
\begin{equation*}
\label{formula_exact_sequence_stable_graph}
\{0\}\to \Mod(M_\Gamma)\to \Stab(\Gamma)/D_\Gamma\to \Aut(\G_\Gamma)\to \{0\}.
\end{equation*}

\begin{rem}[Quotient and product]
\label{rem_quotient_product_stab}
\begin{itemize}
    \item As for usual graphs, it is possible to take the quotient of a stable graph. Let $E\subset X^{int}_1\G$, we can define the stable graph $\G_{\langle E\rangle}$ by gluing the boundary components in $\tilde{E}$ that are identified by $s_1$, where $\tilde{E}$ is the preimage of $E$ in $X^{int}\G$. We can also define $\G_E$ the graph obtained after removing the edges in $E$.
    \item Let $\G$ be a stable graph and
\begin{equation*}
T_\G=\{(l_\beta)\in \R^{X\G}~|~l_{\beta}=l_{s_1(\beta)}\}.
\end{equation*}
 For each edge $\gamma\in X_1\G$, we denote:
\begin{equation*}
l_\gamma : \Lambda_\G \longrightarrow \R
\end{equation*}
the projection. For a family of sets $(E(c))_{c\in X_0\G}$, with maps $L_c: E_c \rightarrow \R^{\partial\G(c)}$. We can consider the fiber product ${\prod}_\G E_c$. It corresponds to the elements of the product that equalize the length of two elements of $X\G$ in the same orbit under the involution.:
\begin{equation*}
    {\prod}_\G E_c=\{(x_c)\in \prod_c E_c~|~l_{\beta}(x)=l_{s_1(\beta)}(x)~~\forall \beta \in X\G\}.
\end{equation*}
If $\gamma\in X_1\G$, we have a projection ${\prod}_\G E_c \to T_\G$ and then $l_\gamma: {\prod}_\G E_c \rightarrow \R$.
\end{itemize}
\end{rem}

\paragraph{Directed multi-curves:}
We use in this text directed multi-curves; they appear in the context of directed ribbon graphs. 
\begin{Def}
\label{def_oriented_multicurve}
Let $\Mo=(M,\epsilon)$, a directed multi-curve is a pair $\Gao = (\Gamma,\epsilon_\Gamma)$ such that:
\begin{enumerate}
\item $\Gamma$ is a primitive multi-curve on $M$.
\item $\epsilon_\Gamma$ is a map, $\epsilon_\Gamma : \partial M_\Gamma \rightarrow \{\pm 1\}$ such that:
\begin{equation*}
\epsilon_\Gamma(\beta)=\epsilon(\beta)~~~~\forall \beta\in \partial M~~\text{and}~~\epsilon_\Gamma(s_1(\beta))=-\epsilon_\Gamma(\beta)~~~~\forall \beta\in X^{int}\Gamma.
\end{equation*}
\item For each component $M_\Gamma(c)$, let $\epsilon_{\Gamma,c}$ be the restriction of $\epsilon_{\Gamma}$ to $\partial M_{\Gamma}(c)$, then we assume that $\Mo_{\Gao}(c)=(M_\Gamma(c),\epsilon_{\Gamma,c})$ is a directed surface.
\end{enumerate}
A directed multi-curve is non-degenerate iff
\begin{enumerate}[resume]
\item For each sub-multi-curve $\Gamma'\subset \Gamma$, if $\epsilon_{\Gamma'}$ is the restriction of $\epsilon_{\Gamma}$ to $\partial M_{\Gamma'}$, then $(\Gamma',\epsilon_{\Gamma'})$ is a directed multi-curve.
\end{enumerate}
We denote $\MS(\Mo)$ the set of non-degenerate directed multi-curves on $\Mo$.
\end{Def}
 
\begin{figure}
    \centering
    \includegraphics[width=0.5\linewidth]{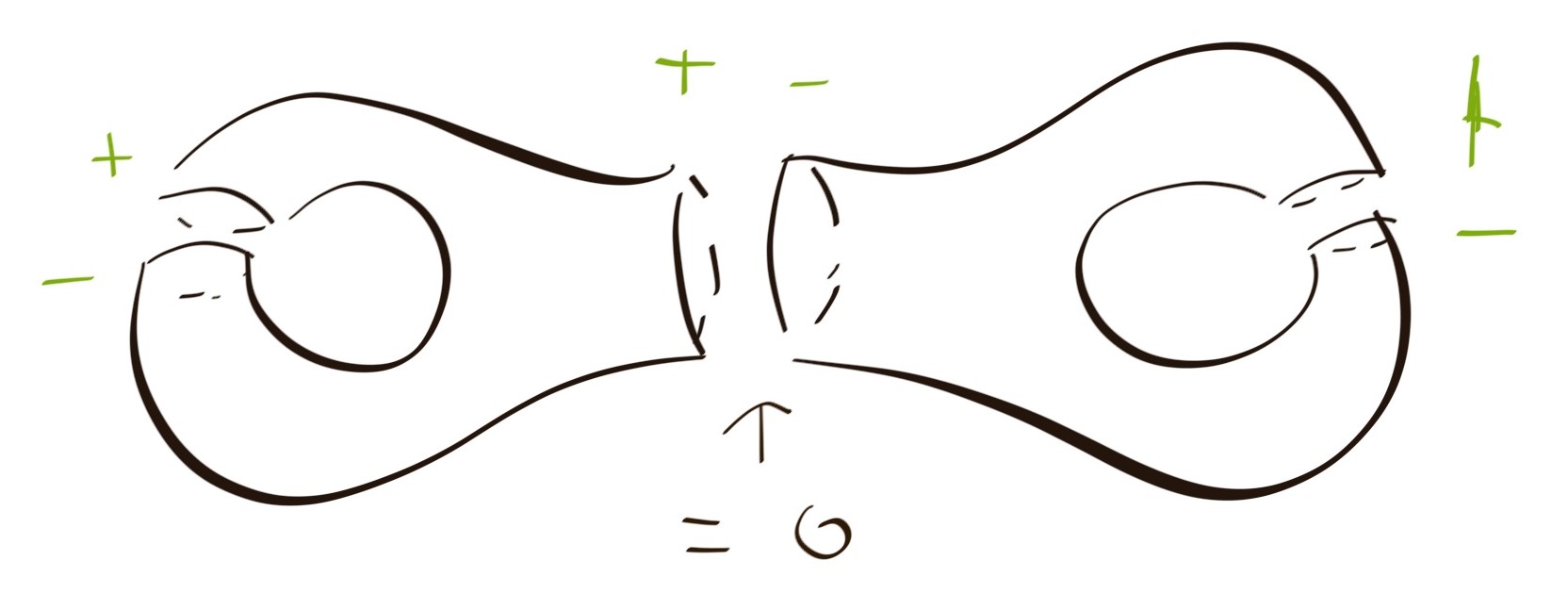}
    \caption{A degenerate directed multi-curve}
    \label{figure_degenerate_graph}
\end{figure}
\begin{rem}[Degenerations]
The point $(4)$ in the definition is more technical but necessary; otherwise, it can happen that the multi-curve contains a sub-multi-curve $\Gamma'$ such that the direction is constant on some connected components of $M_{\Gamma'}$ and such multi-curves are not realizable in practice (see figure \ref{figure_degenerate_graph}).
\end{rem}
\begin{rem}[Orientation of the curves]
\label{rem_orient_curves}
    As we see in remark \ref{rem_orientation_boundaries}, a direction on $\Mo$ defines an orientation of the boundary components. Let $\Gao$ a directed multi-curve on $\Mo$, the direction on $\Mo_{\Gao}$ induces an orientation of the boundary components  $\partial M_\Gamma$. Using the point $(2)$ in the Definition \ref{def_oriented_multicurve}, we see that two boundary components glued together along a curve in $\Gamma$ have the same orientation; then a direction $\Gao$ on $\Gamma$ induces an orientation of the curves in $\Gamma$. 
\end{rem}

To a directed multi-curve, it is natural to associate a directed stable graph, we give a natural definition.

\begin{Def}
\label{def_directed_stable_graph}
A directed, stable graph $\Go$ is a pair $(\G,\epsilon_\G)$ such that:
\begin{enumerate}
\item $\G$ is a stable graph, and $\epsilon : X\G \rightarrow \{\pm 1\}$ is a map.
\item For each $c\in X_0\G$, the restriction $\epsilon_{\G,c}$ of $\epsilon$ to $X\G(c)$ defines a directed surface $\Go(c)$.
\item The involution reverses the direction of the elements of $X^{int}\G$.
\end{enumerate}
A directed stable graph is non-degenerate iff:
\begin{enumerate}[resume]
\item For all $E\in X^{int}_1\G$, the direction restricted to $\G_{\langle E\rangle}$ defines a directed stable graph (see remark \ref{rem_quotient_product_stab} for notation).
\end{enumerate}
\end{Def}

In other words, all the components are directed surfaces and two boundary components glued along a curve have opposite signs. An important point is that the edges and the half edges of a directed, stable graph are directed. We assume that the edges are directed from the $+$ to the $-$ (see figure \ref{figure_acyclic_pant_decomp}). Then a directed stable graph defines a directed graph by forgetting the topology of the components and an usual stable graph by forgetting the direction.\\

As in the case of stable graphs, a non-degenerate directed stable graph defines a directed surface $\Mo_{\Go}$ and a directed, primitive, non degenerate multi-curve $\Gao_{\Go}$ on $\Mo_{\Go}$. If $\Mo$ is a fixed directed surface, we denote $\st(\Mo)$ the subset of non-degenerate directed stable graphs up to homeomorphisms. Then, as in the usual case, we have the identification:
\begin{equation*}
\st(\Mo)\simeq  \MS(\Mo)/\Mod(M).
\end{equation*}
The group of automorphism's $\Aut(\Go)$ of a directed stable graph $\Go$ is then the subgroup of $\Aut(\G)$ that preserves the direction.

\paragraph{Decoration:}
We can define decorated multi-curve. Let $\Md=(M,\nu)$ a decorated surface; a decorated multi-curve $\overline{\Gamma}$ is a multi-curve $\Gamma$ with a decoration $\nu_{\Gamma,c}$ for each connected component of $M_\Gamma$, with the constraint $\sum_c \nu_{\Gamma,c}=\nu$. We can generalize this in a straightforward way for directed surfaces. Similarly, we can also consider decorated, stable graphs. We remark that a decoration $\nu$ on $M$ imposes restrictions on the topology of the possible decorated multi-curves $\Gamma$, for instance, we must have $\#\pi_0(M_\Gamma)\le n(\nu)$.

\paragraph{Multi-arcs:}
\label{paragraph_multiarc}
Let $M\in \bordb$ a surface with boundary, we call ``essential arc'', isotopy classes of unoriented simple arcs, with extremities in $\partial M$. Such arcs are assumed to be non-trivial, in the sense that they do not retract to a portion of a boundary. A family of simple arcs that are disjoint and pairwise non-isotopic defines a multi-arc (see figure \ref{figure_multi_arc_torus}).
Similarly to weighted multi-curves, a weighted multi-arc is defined as a formal sum $\sum_a m_a a$. The arcs in the sum are pairwise distinct and non-intersecting. This implies that the number of arcs is bounded by $6g-6+3n+2m$ whether the surface is of type $(g,n,m)$. We denote  $\A(M)$ the set of arcs, $\MA(M)$ the set of multi-arcs and $\MA_\R(M)$ the space of weighted multi-arcs.\\

In the case of a directed surface $\Mo$, we impose an additional constraint. A multi-arc $A\in \MA(M)$ is directed on $\Mo$ iff each arc connects a positive and a negative boundary of $\Mo$. We denote $\A(\Mo),\MA(\Mo),...$ the subsets of directed multi-arcs.

\begin{figure}
\centering
\includegraphics[height=3cm]{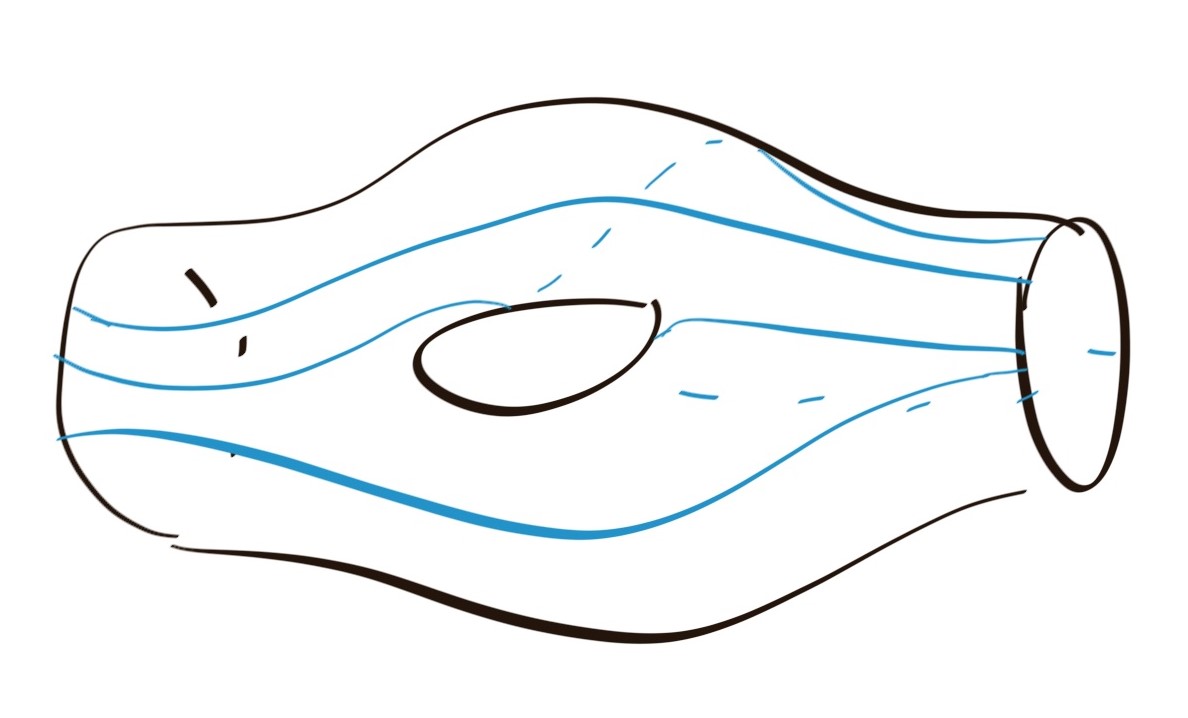}
   \caption{Multi-arc on a torus with one boundary.}
\label{figure_multi_arc_torus}
\end{figure}

\subsection{Foliations, quadratic differentials and Abelian differentials:}
\label{subsection_foliation}
\paragraph{Intersection pairing:}
Let $M$ be a stable surface, W.Thurston introduces the geometric intersection pairing $\iota(\gamma_1,\gamma_2)$ between two isotopy classes of essential curves $\gamma_1,\gamma_2$ (see \cite{fathi2012thurston} or \cite{farb2011primer}). Thurston considers the infinimum of the number of intersection points over the set of all representatives of the two curves. The definition is extended to multi-curves by linearity. If $M$ has a non-empty boundary. We denote $I(\Gamma)=(\iota(\Gamma,\gamma))_{\gamma\in \Si(M)}\in \Rp^{\Si(M)}$ a fundamental result is the following 
\begin{prop}
\label{prop_injectivity_iota}
    The map $I: \MS_\R(M)\to \Rp^{\Si(M)}$ is injective.
\end{prop}

Let $\Gamma_1,\Gamma_2$ two multi-curves, and let $\Gamma_1',\Gamma_2'$ two representatives. We say that $\Gamma_1',\Gamma_2'$ are in minimimal position if they minimize their intersection in the isotopy classes, i.e., $\iota(\Gamma_1',\Gamma_2')=\# (\Gamma_1'\cap \Gamma_2')$. The bigon criterion is a practical way to check if two representatives are in minimal position. Let $M_{\Gamma_1'\cup \Gamma_2'}$ be the closure of the surface obtained by cutting $M$ along $\Gamma_1'\sqcup \Gamma_2'$. The set $\Gamma_1\cap \Gamma_2$ defines marked points on the boundary of $M_{\Gamma_1\cup \Gamma_2}$. We call $n-$gone a topological disc with $n$ marked points on the boundary. We can see that, under our assumptions, each boundary component must contain an even number of marked points. 

\begin{lem}[Bigon criterion 1]
\label{lem_bigon_criterium_1}
$\Gamma_1',\Gamma_2'$ are in minimal position iff $M_{\Gamma_1'\cup \Gamma_2'}$ does not contain components that are $2-$gones.
\end{lem}

The intersection pairing between a multi-arcs and an element of $\widetilde{S}(M)$ is also well defined. It provides a map
\begin{equation*}
   I:  \MA(M) \longrightarrow \Rp^{\tilde{S}(M)}\backslash\{0\}.
\end{equation*}
Moreover, a combinatorial argument allow to prove the following result.
\begin{prop}
\label{prop_injectivite_iota_arc}
The map $I: \MA_\R(M)\to \Rp^{\Sit(M)}$ is injective and continuous\footnote{As we see later, the space $\MA_\R(M)$ is a cell complex and can be endowed with the topology given by this structure.}.
\end{prop}

\begin{rem}[Bigon criterion 2]
\label{rem_bigon_criterion_2}
    As in the case of two multi-curves. By a doubling argument, we can see that: a multi-arc and a multi-curve are in minimal position iff $M_{A\cup \Gamma}$ does not contain $2-$gones.
\end{rem}

\paragraph{Measured foliations:}
Let $M$ a surface with an empty boundary; the intersection pairing allows to embed $\Si(M)\times\Rpp$ to $\Rp^{\Si(M)}\backslash \{0\}$. The closure of the image of $\Si(M)\times\Rpp$ for the product topology is identified with the space of measured foliations (see \cite{masur1982interval} or \cite{fathi2012thurston}). A measured foliation $\lambda$ on $M$ is a foliation with a transverse measure. It can be defined by an atlas of charts $\{(U_i,\phi_i),i\}$ of $M\backslash Z$, where $Z$ is a finite set of singularities. We assume that the transition functions are of the form
\begin{equation}
\label{formula_change_coord_fol}
\phi_i\circ\phi_j^{-1}(x,y)=(f_{i,j}(x,y),\pm y + c_{i,j}).
\end{equation}
These functions preserve the foliation by horizontal lines. The transverse measure of an arc $\alpha~:~[0,1]\longrightarrow M\backslash \Sigma$ is defined locally by the absolute variation of the function $y_i\circ \alpha$ in coordinates. Alternatively, a measured foliation can be defined locally by an exact one form $dy_i$ on each chart; at the overlap of two charts, we have $dy_i=\pm dy_j$. The structure of the singularity at the neighborhood of a point $x\in Z$ is given by the multi-valued one form:
\begin{equation*}
\text{Im} (z^{k}dz)~~~\text{with}~~~k\in \frac{1}{2}\N^*.
\end{equation*}
Where $k$ is the order of the singularity. A saddle connection is a leaf of a foliation that connects two singularities. Two foliations are equivalent if they are related by isotopies and Whitehead moves; these moves are obtained by collapsing a saddle connection that connects two different singularities (see \cite{fathi2012thurston}). We denote $\MF(M)$ the set of equivalence classes of measured foliations on $M$. The intersection pairing between an essential curve in $\S(M)$ and an equivalence class of measured foliation in $\MF(M)$ is well defined; Thurston uses it to characterize the isotopy classes of foliations up to Whitehead equivalences. He considers the map
\begin{eqnarray*}
 I : \MF(M) &\longrightarrow& ~~(\Rp)^{\Si(M)}\\
         \lambda~~~~ &\longrightarrow& (\iota(\lambda,\gamma))_{\gamma\in \Si(M)}
\end{eqnarray*}
and proves that it is injective, two isotopy classes of foliations are Whitehead equivalent iff they have the same intersection pairing.
\begin{rem}[Marked points and foliations]
\label{rem_puncture_foliations}
When the surface has marked points, we allow marked singularities of order $k\ge-\frac{1}{2}$ at these points. In this case, we assume that isotopies are relative to the marked points, and Whitehead moves between two marked singularities are not allowed. The results of Thurston remain true under these assumptions.
\end{rem}

\paragraph{Quadratic differentials:}
\label{paragraph_stratum_ab_quad}
If $M$ is a compact topological surface with empty boundary, a structure of Riemann surface $X$ is an atlas of charts with values in $\C$ and that such transition functions are holomorphic maps. A quadratic differential $q$ on $X$ is then a global section of the square of the cotangent bundle of $X$. In other words, $q$ is given by
\begin{equation*}
q=f(z)(dz)^2,
\end{equation*}
in a local coordinate $z$, where $f$ is an holomorphic function. A quadratic differential might have zeros or poles; let $k_x\in \N$ be the order of the zeros at $x$ we must have
\begin{equation*}
\sum_x k_x= d(M).
\end{equation*}
A quadratic differential also defines a structure of a half-translation surface. Away from the zeros, it is possible to find coordinates in which the differential is given by $(dz)^2$. The change of flat coordinates is of the form $z\rightarrow \pm z + c$, where $c\in \C$ is a constant; using this, a quadratic differential $q$ defines two foliations: the vertical and horizontal foliations. They are given by
\begin{equation*}
    \lambda_v(q)=\text{Re} \sqrt{q}~~~\text{and}~~~\lambda_h(q)=\text{Im} \sqrt{q}.
\end{equation*}
Where $\sqrt{q}$ is a local square root of $q$, which is well defined up to a sign and is closed. We denote $\QT(M)$ the Teichmüller space of quadratic differentials; it is the space of triples $(\phi,X,q)$
where:
\begin{itemize}
\item $X$ is a Riemann surface,
\item $\phi: M \rightarrow X$ is a homeomorphism,
\item $q$ is a holomorphic quadratic differential on $X$.
\end{itemize}
Two triples are equivalent if there is $h: X'\rightarrow X$ biholomorphic such that $h^*q=q'$ and $\phi^{-1} \circ h\circ \phi' $ is isotopic to the identity.
\begin{rem}[Marked points]
When $M$ has marked points, we consider quadratic differentials that are holomorphic outside the set of marked points and with $k_x\ge -1$ for all marked points.
\end{rem}
\paragraph{Pair of transverse foliations:}
\label{paragraph_hubbard_masur}
Two measurable foliations $\lambda_1,\lambda_2$ on a surface $M$ without boundaries are transverse iff for all essential simple curves $\gamma\in \Si(M)$ they satisfy
\begin{equation*}
\label{formula_transverse}
\iota(\lambda_1,\gamma)+\iota(\lambda_2,\gamma)>0.
\end{equation*}
A quadratic differential $q$ defines a pair of foliations $(\lambda_v(q),\lambda_h(q))$, and it is possible to see that these two foliations are necessarily transverse \cite{hubbard1979quadratic}. In fact, the converse statement is true and is part of the Hubard-Masur Theorem.

\begin{thm}[Hubbard-Masur \cite{hubbard1979quadratic}]
\label{thm_hubbard_masur}
Let $M$ a compact, stable surface with an empty boundary. The map $q\to (\lambda_v(q),\lambda_h(q))$ induces a homeomorphism:
\begin{equation*}
\QT(M)\to  \MF(M)^2\backslash \Delta.
\end{equation*}
Where $\Delta$ is the set of pairs of non-transverse foliations.
\end{thm}
\paragraph{Jenkin-Strebel differentials:}
\label{paragraph_jenkin_strebel}
A leaf of a foliation is periodic if it is closed and does not contain singularities. Such a leaf cannot be isolated and is contained in a maximal annulus domain foliated by parallel periodic leaves called cylinder. A cylinder $C$ admits a unique isotopy class of essential curves $\gamma_C$, which is its core curve; moreover, the cylinder has a well-defined height, which we denote as $h_C$. It is possible to see that the core curves are necessarily essential curves; moreover, if two cylinders are distinct, then the core curves are non-isotopic. The multi-curve associated with $\lambda$ is
\begin{equation*}
    \Gamma(\lambda)=\sum_{C}h_C \gamma_C,
\end{equation*}
where we sum over all the cylinders. A foliation is periodic if all the leaves are either periodic or are saddle connections; in this case, the closure of the union of the cylinders covers the surface. In this case, for all $\gamma\in \Si(M)$, we have the equality $I(\Gamma(\lambda))=I(\lambda)$.
Conversely, for each multi-curve $\Gamma\in \MS_\R(M)$, it is possible to associate a periodic foliation $\lambda(\Gamma)$ that is unique up to Whitehead moves. Then the intersection pairing allows us to identify periodic foliations with the space of weighted multi-curves $\MS_\R(M)$.\\

In this text, a Jenkin-Strebel differential is a quadratic differential with a periodic horizontal foliation (see \cite{strebel1984quadratic}). Then, by Theorem \ref{thm_hubbard_masur}, the Hubbard-Masur map identifies the subspace of Jenkin-Strebel differentials with
\begin{equation*}
(\MF(M)\times \MS_\R(M))\backslash\Delta.
\end{equation*}
If $\Gamma\in \MS(M)$ is a primitive multi-curve, we denote $\MF_\Gamma(M)$ the set of foliations transverse to $\Gamma$. The Hubbard-Masur map defines a map:
\begin{equation*}
q_\Gamma: \MF_\Gamma(M)\longrightarrow \QT(M).
\end{equation*}
The quadratic differential $q_\Gamma(\lambda)$ is Jenkin-Strebel and satisfies
\begin{equation*}
\lambda_h(q_\Gamma(\lambda))= \lambda(\Gamma),~~~\text{and}~~~\lambda_v(q_\Gamma(\lambda))= \lambda.
\end{equation*}
Then image of $q_\Gamma$ is the space
\begin{equation*}
\QT_\Gamma(M)= \{q\in \QT_0(M)|\lambda_h(q)= \lambda(\Gamma)\}.
\end{equation*}

\paragraph{Foliations with boundaries and double poles:}
\label{paragraph_fol_double}
Let $M\in \bord$; we need to consider measured foliations on $M$. A possible way is to take foliations on $M$ such that, for all boundary $\beta$, all the leaves of the foliation that cross $\beta$ are either transverse to $\beta$, or $\beta$ is a non-singular periodic leaf of the foliation. As before, we consider measured foliation up to isotopies and Whitehead moves; we denote $\MFt(M)$ the corresponding space.\\

Nevertheless, this is not completely satisfactory; it is somewhat better to consider foliations with poles rather than foliations with boundaries. We introduce the space $\MF(M)$ of measured foliations on the punctured surface $M^\bullet$, with singularities of order $-1$ (double poles) at  punctures that correspond to boundary components of $M$. It means that each $\beta^\bullet$ admits a neighborhood $U_\beta$ that is isomorphic to a disc with a local chart:
\begin{equation*}
U_\beta \longrightarrow \R/l_{\beta}\Z \times \Rpp.
\end{equation*}
The leaves of the foliation are either the vertical or the horizontal lines in the half-infinite cylinder $\R/l_{\beta}\Z\times\Rpp $. When the leaves are horizontal, the value of $l_\beta$ is not relevant, and we take $l_\beta=1$. Otherwise, the number $l_\beta$ corresponds to the absolute value of the residue at the pole $\beta^\bullet$. As below, these foliations are considered up to isotopies and Whitehead moves. When the surface $M$ has marked points, we allow  singularities of order $\ge -\frac{1}{2}$ at these points ($k_x=0$ is a regular marked point). It is possible to extend the Thurston intersection pairing to the case of foliations with poles:
\begin{equation*}
\iota~:~\Sit(M)\times \MF(M) \longrightarrow \Rp.
\end{equation*}
The absolute value of the residue of an element in $\MF(M)$ at a pole $\beta^\bullet$ defines a map $l_\beta: \MF(M) \longrightarrow \Rp$\footnote{It is the intersection pairing with the circle $\beta$ around this pole (which corresponds to a boundary of $M$)}, and we denote $\Lbord=(l_\beta(\lambda))_{\beta\in \partial M}$.

\begin{rem}
An element of $\MFt(M)$ can be extended uniquely to a foliation on $M^\bullet$, and at the punctures, the foliation has a simple pole. Then, there is a surjective, but not injective, map: $\MFt(M)\longrightarrow \MF(M)$. If we restrict the map to the subset of foliations with non-vanishing residues, i.e, when the leaves are transverse to the boundary components, then the map is injective.
\end{rem}

\paragraph{Multi-arcs and foliations:}
\label{paragraph_multiarc_fol}
As for multi-curves, a weighted multi-arc defines a partial foliation of the surface, it can be extended by the Thurston enlargement procedure \cite{fathi2012thurston}. Then it is possible to construct a unique map that preserves the intersection product:
\begin{equation*}
\MAr(M)\longrightarrow \MF(M).
\end{equation*}
Moreover, this map is continuous. We give this map explicitly in the case of filling multi-arcs in paragraph \ref{paragraph_ribbon_filling_arc}. A fact that is important in the sequel is that the converse map exists. A foliation on a surface with boundary defines, in a natural way, a weighted multi-arc by considering the leaves that connect two poles. This procedure is the exploration of the surface from the boundary components.
\begin{prop}
\label{prop_zippered_fol_general}
There is a continuous map:
\begin{equation*}
 A: \MF(M)\longrightarrow \MAr(M)\cup \{0\}.
\end{equation*}
Which coincides with the identity on $\MAr(M)$.
\end{prop}

\begin{proof}
For the construction of the map, consider for each pole with non-vanishing residue a circle around it. Assume that these circles are pairwise disjoint, and each of them bounds a disc that contains a double pole and no other singularity. By using local coordinates around the pole, we can assume that the foliation intersects transversely these circles. Each circle defines a contractible neighborhood $U_\beta$ of the pole $\beta^\bullet$ and if a leaf enters such a neighborhood, it cannot escape. Then the intersection of the singular leaves of the foliation and the circles $C_\beta$ is a finite set $X_0\lambda$ and each circle $C_\beta$ is divided into a finite number of intervals. We denote $X\lambda$ the set of intervals. If $x\in C_\beta$ is a point in one of these intervals, it is possible to consider the half leaf starting at $x$ in the direction opposite to $U_\beta$. By assumption, this leaf does not hit any singularities. By the Thurston recurrence Lemma \cite{fathi2012thurston} such a leaf must intersect another circle $C_{\beta'}$ at a point $T(x)$. The map $T$ is well defined on the union of the intervals and induces a map:
\begin{equation*}
s_1: X\lambda \longrightarrow X\lambda,
\end{equation*}
such that $T$ maps $I$ to $s_1(I)$. The map $s_1$ is an involution. A leaf of the foliation that joins $I$ and $s_1(I)$ defines an arc $a_I$ on the surface. By using the bigon criterion (remark  \ref{rem_bigon_criterion_2}) it is possible to see that two arcs are necessarily non-homotopic. And then the foliation defines a multi-arc. Moreover, the transverse measure on $\lambda$ induces a measure on each interval $I$, and the total mass gives a weight $m_I(\lambda)$. The map $T$ preserves these measures and then $m_I(\lambda)$ defines weight on the arcs $a_I$, and this construction gives the desired map. The continuity is due to the fact that the weights on the edges can be computed by using intersection pairings with appropriate curves.
\end{proof}
\begin{figure}
\centering
\includegraphics[width=0.5\textwidth]{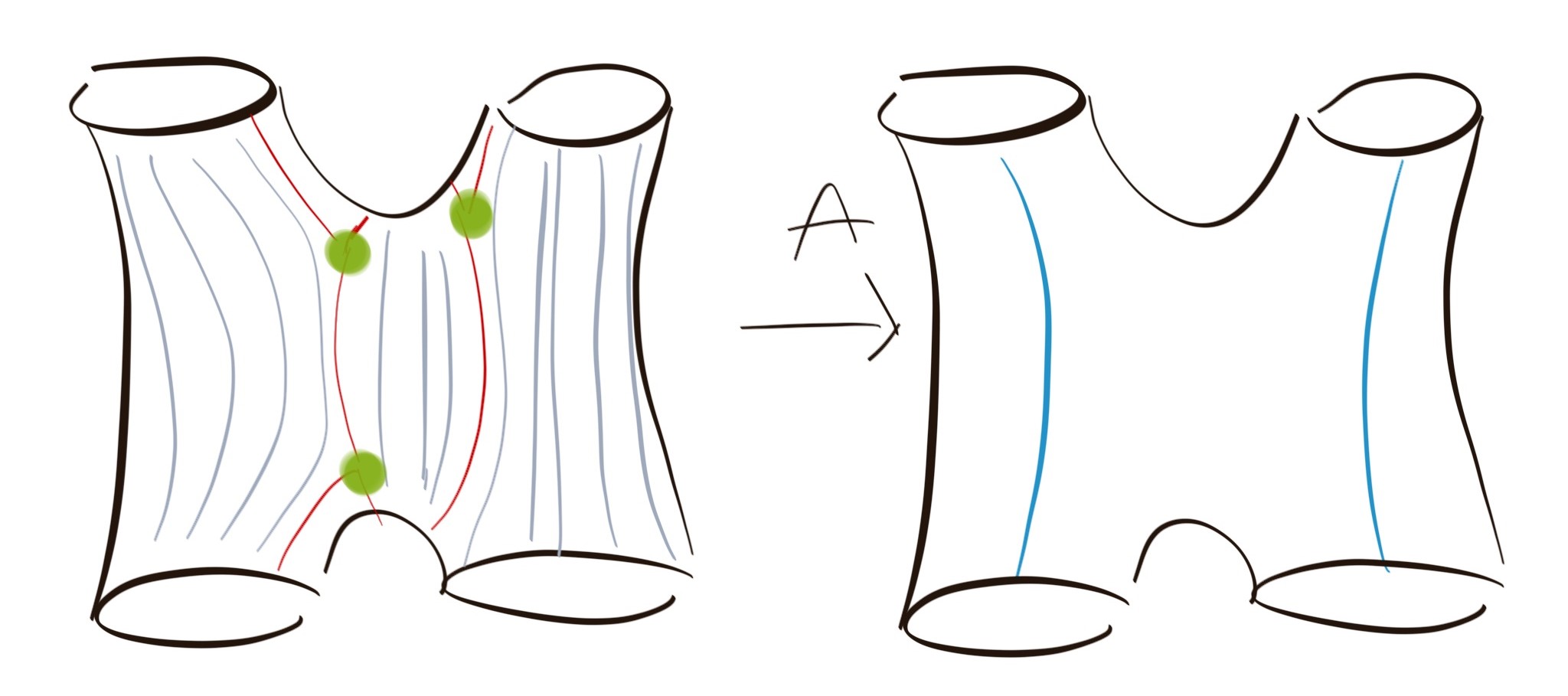}
\caption{Illustration of the map $A$.}
\label{fugure_zippered_1}
\end{figure}

 We can also make the statement of Proposition \ref{prop_zippered_fol_general} more precise and describe the entire foliation. From this, it is possible to derive the following Theorem, which is similar to the results of Thurston.

\begin{thm}
\label{thm_injetivite_iota_fol2}
Let $M$ a surface with boundary; the map:
\begin{equation*}
I: \MF(M)\longrightarrow \Rp^{\Sit(M)},
\end{equation*}
is injective. Moreover, the space $\MF(M)$ is the closure of $\MAr(M)$ for the topology given by the intersection pairing.
\end{thm}

\paragraph{Foliations with vanishing residues:}
\label{paragraph_zeros_residu}
In this paragraph, we consider $\MF_0(M)$ the subset of $\MF(M)$ formed by foliations with vanishing residues. $\MF_0(M)$ contains a trivial element, which is the trivial Jenkin-Strebel foliation; it is periodic, and all the non-singular trajectories retract to a pole. We denote this foliation as $\lambda(0)$; it is unique up to Whitehead moves and isotopies. A nontrivial foliation contains leaves that are not homotopic to boundary curves; it is then possible to pinch the boundary components and obtain a foliation on the punctured surface $M^{\bullet}$, with no double poles and marked conical singularities on each puncture of $M^{\bullet}$.
\begin{prop}
\label{prop_contraction}
The contraction defines a bijection:
\begin{equation*}
C: \MF_0(M)\backslash \{\lambda(0)\}\longrightarrow \MF(M^{\bullet}).
\end{equation*}
Moreover, the bijection is characterized by:
\begin{equation*}
\iota(\lambda,\gamma)=\iota(C(\lambda),\gamma)
\end{equation*}
for all curve $\gamma\in \Si(M^{\bullet})$.
\end{prop}
In a similar way, we can consider the space $\MFt_0(M)$ of foliations in $\MFt(M)$ with vanishing residues. It corresponds to elements of $\MF_0(M)$ marked by a choice of non-singular trajectories around each pole. Then we have an identification
\begin{equation*}
    \MFt_0(M) \simeq \MF_0(M) \times \Rpp^{\partial M}.
\end{equation*}

\paragraph{Quadratic differentials with double poles:}
For a surface $M\in\bord$, we consider quadratic differentials on the punctured surface $M^{\bullet}$ with double poles at the marked points $\beta^\bullet$. As before, we can consider the Teichmüller space $\QT(M)$; this does not depend on the choice of $M^{\bullet}$. It is possible to take the residue around a double pole of $q$ by taking a local square root; the result is an element of $\C/\{\pm 1 \}$, but we can fix a representative by assuming that the real part is positive (or the imaginary part when the real part is zero). We denote $\QT_0(M)$ the subspace of quadratic differentials with real residues at the poles.
 
\paragraph{Pairs of transverse foliations:}
Transversality has a straightforward generalization for foliations with double poles, but we need to replace $\Si(M)$ by $\Sit(M)$ in the characterization, Formula \ref{formula_transverse}. As before, a quadratic differential with double poles defines a pair of transverse foliations with poles. We have a map:
\begin{equation*}
\QT(M)\to \MF(M)^2\backslash \Delta.
\end{equation*}
We do not give the most general statement of the Theorem, but only the part that is useful in this text. A doubling argument allows us to prove the following result by using the Hubbard-Masur Theorem \ref{thm_hubbard_masur}.\\

\begin{thm}
\label{thm_hubbard_masur_poles}
Let $M$ a surface with boundary, and let $\Delta$ be the subspace of pairs of non-transverse foliations in $\MF(M)\times \MF_0(M)$. Then the map:
\begin{equation*}
\QT_0(M)\longrightarrow (\MF(M)\times \MF_0(M))\backslash \Delta,
\end{equation*}
is a bijection.
\end{thm}

\paragraph{Periodic foliations:}
We can generalize the notion of a cylinder in a straightforward way. There is only one difference: a cylinder can have an infinite height. It happens for a foliation that has a pole with vanishing residue. In this case, there is a maximal neighborhood of the poles of the form $\mathbb{S}_1\times \Rpp$ in which $\lambda$ is given by $dy$. We can also generalize the notion of periodic foliations, but we must add the Jenkin-Strebel foliation given by the empty multi-curve $0$. The space of periodic foliations is then identified with
\begin{equation*}
    (\MS_\R(M)\sqcup \{0\})\times \MF(M)\backslash \Delta.
\end{equation*}

\paragraph{Directed foliations and Abelian differentials:}
A measured foliation $\lambda$ is dirigible if it can be globally defined by a closed one form. It is equivalent to the fact that the changes of local charts take the form
\begin{equation*}
    (x,y)\longrightarrow (f(x,y),y+c).
\end{equation*}
When the surface is connected, there are at most two possible directions, and a directed foliation will be denoted by $\lambdao$.
Directed foliations are naturally related to Abelian differentials. An Abelian differential is the data of a structure of Riemann surface $X$, with an holomorphic one form $\alpha$ on $X$ (i.e., a section of $K_X$), $\alpha$ might have zeros; and in this case, we have:
\begin{equation*}
    \sum_x k_x= 2g-2.
\end{equation*}
The square of an Abelian differential is a quadratic differential. According to this, an Abelian differential defines a pair of transverse measured foliations. In this case, outside the set of zeros, we can find local coordinates $z$ in which $\alpha$ is equal to $dz$. Moreover, the transition functions are translations; there is no sign ambiguities. Then the two foliations $\lambda_h(\alpha^{\otimes 2}),\lambda_v(\alpha^{\otimes 2})$ are naturally directed, and we denote them $\lambda^{\circ}_h(\alpha),\lambda^{\circ}_v(\alpha)$. As before, we can consider the Teichmüller $\HT(M)$ of Abelian differentials on $M$; this space is naturally stratified.

\begin{rem}
If $M$ has marked points, we assume that these points correspond to marked regular points or marked zeros of the Abelian differentials.
\end{rem}

For a surface with boundary, the definition is similar. We consider directed foliation $\lambdao$ with poles. In this case, when we compute the residues, there are no sign ambiguities. The residues define real numbers by taking the period of the closed one form. The signs of these numbers define a sign for each boundary component. If the foliation is defined by a closed one form, then, by virtue of the Stokes Theorem, the sum of the residues is zero. If the residues are non-vanishing, there is at least one positive and one negative boundary components. According to this, we see that a directed foliation with non-vanishing residues on a surface $M$ defines a direction on this surface. Then, for each directed surface $\Mo$ we can consider the space $\MF(\Mo)$ of directed foliations that give a direction compatible with $\Mo$. Moreover, by Stockes theorem, the sum of the residues of the positive boundaries is equal to the sum of the residues of the negative boundaries. And then the map $\Lbord$ induces a map:
\begin{equation*}
\Lbord : \MF(\Mo)\longrightarrow \Lambda_{\Mo}.
\end{equation*}
There is an equivalent version of Proposition \ref{prop_zippered_fol_general} in this case.
\begin{prop}
Let $\Mo$ be a directed surface.
\begin{itemize}
    \item The restriction of $A: \MF(M)\longrightarrow \MA_\R(M)$ induces a map:
\begin{equation*}
A: \MF(\Mo)\longrightarrow \MA_\R(\Mo).
\end{equation*}
\item Each element $A\in \MA_\R(\Mo)$ defines a directed foliation by enlarging procedure; there is a unique map:
\begin{equation*}
\MA_{\R}(\Mo)\longrightarrow \MF(\Mo).
\end{equation*}
That preserves the intersection pairings.
\end{itemize}
\end{prop}
\paragraph{Cylinders of a directed foliation:}
If $\Mo$ is a directed surface and $\lambda^{\circ}$ is a directed foliation, it is possible to orient the core curve of a cylinder in a natural way. Indeed, for each cylinder, we can find a map:
\begin{equation*}
\varphi : \R/\Z\times ]0,h[\rightarrow M,
\end{equation*}
such that the foliation is given by $dy$ in these coordinates. We chose to label the upper boundary of the cylinder by $+$ and the lower boundary by $-$. This defines an orientation of the curves in $\Gamma(\lambda)$, then according to remark \ref{rem_orient_curves}, it defines a directed multi-curve $\Gao(\lambdao)$. Then, by the Stockes Theorem, we can prove the following:

\begin{lem}
\begin{itemize}
\item If $\lambdao\in \MF(\Mo)$, the multi-curve $\Gao(\lambdao)\in \MF_\R(\Mo)$ is non-degenerate.
\item For each non-degenerate multi-curve $\Gao\in \MS_\R(\Mo)$, there is a directed foliation $\lambda^\circ=\lambdao(\Gao)\in \MF(\Mo)$ such that
\begin{equation*}
\Gao=\Gao(\lambdao),
\end{equation*}
and this foliation is unique up to Whitehead moves.
\end{itemize}
\end{lem}

\newpage
\section{Directed stable graphs}
\label{section_directed}
\subsection{Cone associated to directed stable graphs}

\paragraph{Cone of relative cycles:}
Let $\Go=(\G,\epsilon)$ be a directed stable graph, we can construct the cone $\Lambda_{\Go}$ of directed cycles on $\Go$; it is the subset of $\Lambda_\G$ defined by:
\begin{equation}
\label{conedirected_graph}
\Lambda_{\Go} = {\prod}_\G \Lambda_{\Go(c)} = \left\{L\in \prod_c \Lambda_{\Go(c)}~|~l_\beta=l_{s_1(\beta)}~~~~\forall~\beta\in X\G\right\}.
\end{equation}
At each node of the graph, the sum of the lengths of the positive boundaries is equal to the sum of the negative boundaries.
For each edge $\gamma\in X_1\G$ we can define the projection:
\begin{equation*}
l_\gamma : \Lambda_{\Go}\longrightarrow \Rp,
\end{equation*}
we call it the length of $\gamma$. If $\Go\in \st(\Mo)$, we can identify $\partial \Go$ and $\partial \Mo$. We denote $\Lbord=(l_\beta)_{\beta\in \partial M}$, we can see that the image of $\Lbord$ is included in $\Lambda_{\Mo}$:
\begin{equation*}
\Lbord : \Lambda_{\Go} \longrightarrow \Lambda_{\Mo}.
\end{equation*}
For all $L\in \Lambda_{\Mo}$, we denote $\Lambda_{\Go}(L)$ the level set $\Lbord^{-1}(\{L\})$.\\

A walk in a graph is a sequence of positive half edges $(\beta_1,...\beta_r)$ with $[s_1\beta_k]_0= [\beta_{k+1}]_0$ for all $k\in \{1,...,r-1\}$ (the target of $\beta_k$ is the source of $\beta_{k+1}$). In a directed graph, we assume that the half edges are positive (except if $\beta_1\in \partial \Go$, in this case, we assume it is negative). An absolute cycle is a closed walk, and a relative cycle is a walk that goes from a negative boundary to a positive boundary. An absolute or relative cycle $c$ defines an element $[c]$ of $\Lambda_{\Go}$, which motivates the terminology, $l_\gamma([c])$ is the number of times that $c$ passes through $\gamma$. We call a primitive cycle, a cycle that can't be written as a union of two distinct cycles.
\begin{prop}
\label{prop_ext_ray_lambda}
Each element of $\Lambda_{\Go}$ is a linear combination of primitive cycles with positive coefficients.
\end{prop}

\begin{proof}
    If $u$ is an element of $\Lambda_{\Go}$ and $\gamma$ is an edge in the support of $u$, i.e., $u_\gamma>0$ . The fact that the sum of the inputs and the outputs are equal at each vertex of the graph implies that we can find a primitive cycle $c$ that passes through this edge with support contained in the support of $u$. This is a consequence of an exploration process in the directed graph. Moreover, by multiplying $c$ by a strictly positive real number, we can assume that $u\ge c$ and there is an edge $\gamma'$ with $u_{\gamma'}=c_{\gamma'}>0$. Then the support of $u-c$ is strictly contained in the one of $u$. By induction on the cardinal of the support of $u$, we obtain the claim.
\end{proof}

\paragraph{Tangent space and cohomology:}
    Let $T_{\Go}$ the tangent space of $\Lambda_{\Go}$, $T_{\Go}$ can be identified with the homology $H_1(\G,\partial \G,\R)$ of the graph relatively to the boundary by using the complex
\begin{equation*}
0\rightarrow \R^{X_1\G}\rightarrow \R^{X_0\G}\rightarrow 0.
\end{equation*}
A directed edge $\gamma$ has a positive $\gamma_+$ and a negative $\gamma_-$ extremity, and we set $\partial \gamma= \gamma_- -\gamma_+$.
For each $L$ in the image of $\Lbord$, the tangent space $K_{\Go}$ of the polytope $\Lambda_{\Go}(L)$ is the kernel of $T\Lbord$. We can see that $K_{\Go}$ corresponds to the homology $H_1(\Go,\R)$. Then the exact sequence:
\begin{equation*}
\label{formula_KGo}
0\longrightarrow K_{\Go} \longrightarrow T_{\Go}\overset{T\Lbord}{\longrightarrow}T_{\Mo} \longrightarrow 0,
\end{equation*}
is the long exact sequence of relative homology.
\paragraph{Behavior of $l_\gamma$ on $\Lambda_{\Go}$:}
\label{paragraph_l_gamma}
Let $\Go$ be a directed stable graph, and $\gamma\in X_1^{int}\Go$ an edge; Proposition \ref{prop_bounded_unbounded} gives information's on the behavior of $l_\gamma$.
\begin{prop}
\label{prop_bounded_unbounded}
According to the topology of the graph, we have the following dichotomy:
\begin{itemize}
\item The length function $l_\gamma$ is unbounded on $\Lambda_{\Go}(L)$ for each $L$ in the image of $\Lbord$;
\item The length satisfies
\begin{equation*}
 l_\gamma \le \sum_{\beta \in \partial^+\Go} l_\beta. \end{equation*}
\end{itemize}
\end{prop}
We call the first edges the unbounded edges and the second the bounded edges. To prove Proposition \ref{prop_bounded_unbounded}, we use Lemma \ref{lem_topo_edges} given below, and the proof also gives the bound.
\begin{lem}
\label{lem_topo_edges}
An edge is bounded iff it is not contained in the support of any absolute cycle.
\end{lem}
\begin{proof}
We use Proposition \ref{prop_ext_ray_lambda}. Let $x$ an element in $\Lambda_{\Go}$, which is a linear combination of primitive relative cycles $x=\sum_{i}x_i \gamma_i$. Each cycle $\gamma_i$ connects a positive and a negative boundary, then we have $\sum_{i}x_i= \sum_{\beta\in \partial^+\Go}l_{\beta}(x)$. Each cycle crosses the edge $\gamma$ at most once because they are primitive, then $l_\gamma(x)\le \sum_i x_i$. We can conclude that the edge is bounded if it is not contained in the support of any absolute cycle and $l_\gamma \le \sum_{\beta\in \partial^+\Go}l_{\beta}$. Conversely, if an edge $\gamma$ is crossed by an absolute cycle $c$ in the graph, we have $l_\gamma(c)>0$. Let $L$ be in the image of $\Lbord$ and $x\in \Lambda_{\Go}(L)$. For each $t>0$, we can see that $x+tc$ defines an element of $\Lambda_{\Go}(L)$, and then the edge is unbounded because $l_\gamma(x+tc)\ge tl_\gamma(c)$.
\end{proof}
\paragraph{Degenerate graphs:}
\label{paragraph_deg_graphs}
As we see, a directed stable graph or a directed multi-curve can be degenerate. We give the following definition:
\begin{itemize}
\item We say that an edge $\gamma\in X_1\Go$ is degenerate if $l_\gamma$ is constant equal to zeros on $\Lambda_{\Go}$.
\end{itemize}
Proposition \ref{prop_degenerate_graph_degenerate_edge}relies on degenerate directed graphs and degenerate edges.
\begin{prop}
\label{prop_degenerate_graph_degenerate_edge}
A directed stable graph is degenerate iff it has degenerate edges.
\end{prop}

\subsection{Acyclic stable graphs}
\paragraph{Definition:}
We use a particular kind of directed stable graphs, which are acyclic stable graphs. A directed stable graph induces a relation on the set of vertices. We say that $x\ge y$ iff there is a path oriented positively from $y$ to $x$ (see figure \ref{figure_acyclic_pant_decomp}).
\begin{Def}
\label{def_acyclic}
A directed graph is acyclic iff it satisfies one of the following equivalent conditions:
\begin{itemize}
\item The graph contains no directed absolute cycle.
\item The relation on the vertices of the graph is a strict partial order.
\end{itemize}
\end{Def}

\begin{proof}
It is straightforward to see that the graph admits a cycle iff the relation is not anti-symmetric.
\end{proof}
In what follows, we need to label the vertices of an acyclic graph, and then we use the following definition:
\begin{Def}
\label{def_linear_order}
A linear order on a directed acyclic graph is an enumeration of the vertices (elements of $X_0\Go$), which is increasing for the order relation.
\end{Def}
We can see that an acyclic stable graph is necessarily non-degenerate. Moreover, an important property of acyclic stable graphs is the following:
\begin{cor}
\label{cor_acyclic_bounded}
A directed stable graph is acyclic iff all the edges are bounded (Proposition
\ref{prop_bounded_unbounded}).
\end{cor}

\begin{proof}
Using Proposition \ref{prop_bounded_unbounded}, if a graph is bounded, an edge can't lie in the support of an absolute cycle, and then the graph is acyclic. Conversely, if the graph is acyclic, there is no absolute cycle, and then all the edges are bounded according to Proposition \ref{prop_bounded_unbounded}.
\end{proof}

Proposition \ref{prop_gluing_acycl} will be useful to prove Lemma \ref{lem_maximal_minimal}. If $\Go$ is a directed stable graph, and for each $c\in X_0\Go$, let $\G^{\circ,c}$ be a directed stable graph on $\Go(c)$. We can consider the directed stable graph $\widetilde{\Go}$ obtained by gluing the stable graphs $(\G^{\circ,c})_{c\in X_0}$ according to $\Go$.
\begin{prop}
\label{prop_gluing_acycl}
If the stable graphs $\Go$ and $(\G^{\circ,c})_{c\in X_0}$ are acyclic, then $\widetilde{\Go}$ is also acyclic.
\end{prop}

\begin{proof}
A cycle on $\tilde{\Go}$ induces a possibly trivial cycle on $\Go$. If $\tilde{\Go}$ is not acyclic, we can consider an absolute primitive cycle in $\tilde{\Go}$. Then either this cycle induces a non-trivial cycle in $\Go$ and then $\Go$ is not acyclic; or the cycle is trivial and it is contained in a component $\G^{\circ,c}$ for some $c$, then this component is not acyclic. By contraposition, we obtain Proposition \ref{prop_gluing_acycl}.
\end{proof}
\paragraph{Directed stable trees:}
\label{paragraph_trees}
Another particular kind of directed graphs are directed trees; we give the following characterization: An edge $\gamma\in X_1\Go$ is constant if the function $l_\gamma$ is constant on $\Lambda_{\Go}(L)$ for each $L$ in the image of $\Lbord$.

\begin{lem}
\label{lem_tree_constant}
A directed, stable graph $\Go$ is a tree iff all the edges in $X_1^{int}\Go$ are constant.
\end{lem}

\begin{proof}
As $l_\gamma$ is linear, it is constant on $\Lambda_{\Go}(L)$ iff $dl_\gamma$ is zero on $K_{\Go}$. Moreover, $\Go$ is a tree iff $H_1(\G)=0$. We see before that $H_1(\G)\simeq K_{\Go}$, which is the kernel of $T\Lbord$, and then all the edges are constant if $\Go$ is a tree. Conversely, if all the edges are constant, then the map $T_{\Go}\rightarrow \R^{X_1\G}$ is zero on $K_{\Go}$. But this map is injective, and then $K_{\Go}=\{0\}$.
\end{proof}
We also give the following lemma:
\begin{lem}
\label{lem_constant_edges}
An edge is constant iff it splits a connected component of the graph into two connected components.
\end{lem}
\begin{proof}
In an equivalent way, an edge is constant if the tangent map $dl_\gamma$ vanishes on the space $K_{\Go}$. It is possible to identify this space with the homology group $H_1(\G)$. Let $\G_\gamma$ (see \ref{paragraph_graph}) be the stable graph obtained after removing $\gamma$. We have a natural map:
\begin{equation*}
H_1(\G_\gamma)\longrightarrow H_1(\G).
\end{equation*}
The LHS is also the kernel of $dl_\gamma$. Computing the dimensions we obtain,
\begin{equation*}
\dim H_1(\G)-\dim H_1(\G_\gamma)= 1 + \dim H_0(\G)-\dim H_0(\G_\gamma).
\end{equation*}
Then the kernel is equal to the full space iff the edge splits a component into two components.
\end{proof}
If $\gamma$ spares $\Go$ in two connected components, $\Go_\gamma=\Go_+\sqcup \Go_-$, with $\gamma$ oriented from $ \Go_-$ to $\Go_+$. The length $l_\gamma$ of $\gamma$ factors through $\Lbord$ and is the restriction of a linear function:
\begin{equation*}
l_{\Go,\gamma}: \Lambda_{\Mo}\longrightarrow \R.
\end{equation*}
If $I_\pm=\partial \Go_\pm \backslash \{\gamma\}$ then we have the expression
\begin{equation*}
 l_{\Go,\gamma}=\sum_{\beta\in I_+}\epsilon(\beta)l_\beta=-\sum_{\beta\in I_-}\epsilon(\beta)l_\beta.
\end{equation*}
which is due to relation between the boundary components on $\Go_\pm$.

\begin{figure}
    \centering
    \includegraphics[width=0.5\linewidth]{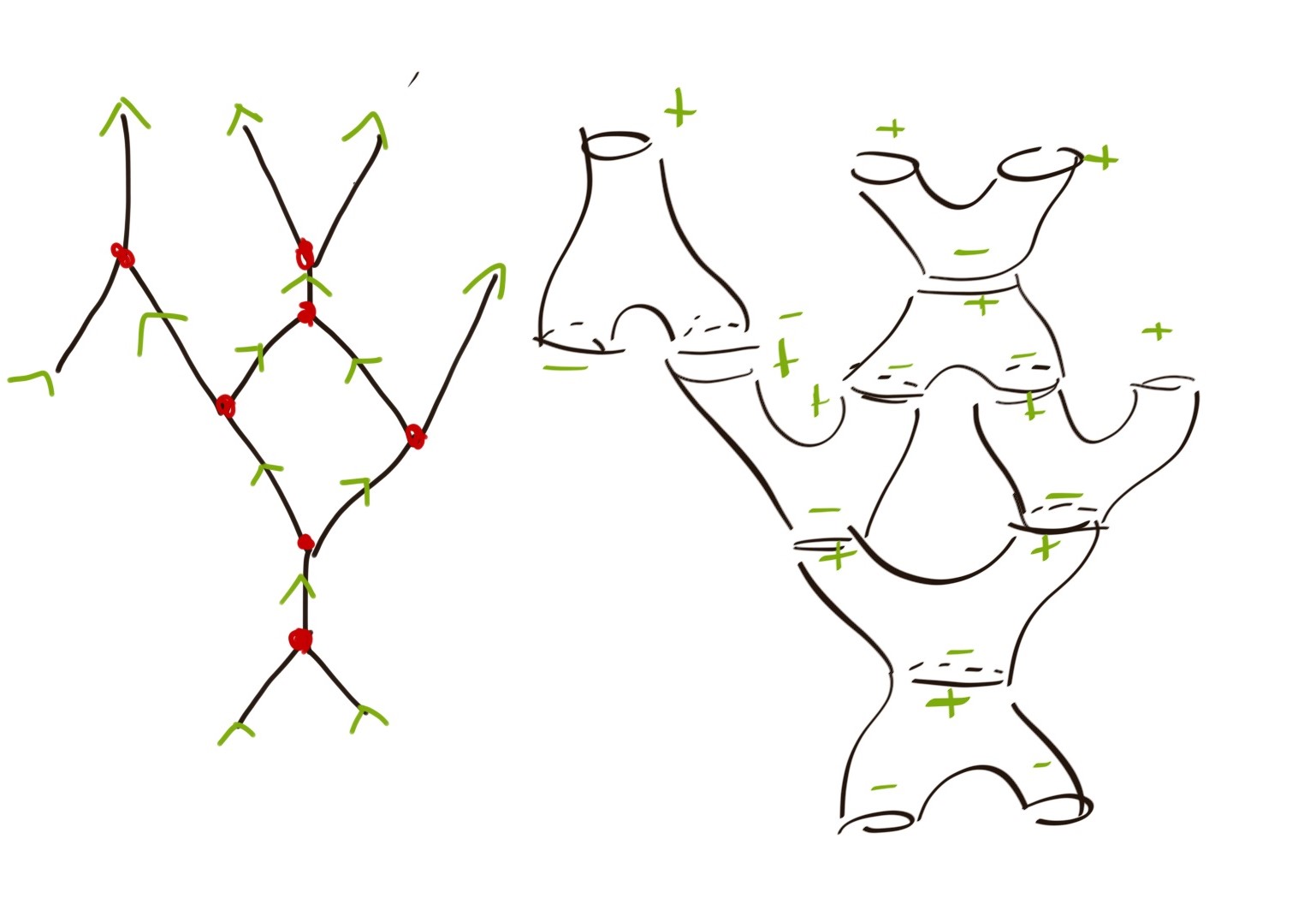}
    \caption{An acyclic directed stable graph.}
    \label{figure_acyclic_pant_decomp}
\end{figure}

\paragraph{Measures and volumes:}
Let $\Go$ be a directed stable graph marked by $\Mo$. For each $L$ in the image of $\Lbord$ the space $\Lambda_{\Go}(L)$ is a polytope with tangent space $K_{\Go}$ defined by the exact sequence in Formula \ref{formula_KGo}. The three spaces in this sequence contain a natural lattice of integer points $K_{\Go}(\Z),T_{\Go}(\Z),T_{\Mo}(\Z)$. In an affine space, a lattice in the tangent space defines a measure by assuming that the covolume of the lattice is one (see paragraph \ref{paragraph_measure_convex} for more details). We denote $d\sigma_{\Go}$, $d\sigma_{\Go}(L)$ and $d\sigma_{\Mo}$ these measures on $\Lambda_{\Go},\Lambda_{\Go}(L)$ and
$\Lambda_{\Mo}$. Then, Propositions \ref{prop_decomp_mesure_dir_stab_graph}  and \ref{lem_decompo_measure} alow to decompose the measures.

\begin{prop}
\label{prop_decomp_mesure_dir_stab_graph}
The sequence:
\begin{equation*}
0\longrightarrow K_{\Go}(\Z) \longrightarrow T_{\Go}(\Z) \longrightarrow T_{\Mo}(\Z) \longrightarrow 0,
\end{equation*}
is also exact. Then, for each $L\in \Lambda_{\Mo}$, the measure $d\sigma_{\Go}(L)$ is the conditional measure of $d\sigma_{\Go}$ with respect to $d\sigma_{\Mo}$.
\end{prop}
\begin{proof}
We can identify $K_{\Go}$ with the cohomology $H_1(\Go,\R)$, the exact sequence of Formula \ref{formula_KGo} is given by a long exact sequence of relative homology of graphs. The same sequence remains true for integral homology, which allows us to prove the first part of Proposition \ref{prop_decomp_mesure_dir_stab_graph}. For the second part, we can use Proposition \ref{lem_decompo_measure}.
\end{proof}

By using Corollary \ref{cor_acyclic_bounded}, if the graph $\Go$ is acyclic, the edges are bounded, then the subset $\Lambda_{\Go}(L)$ is bounded and has a finite volume. If $F=(F_c)_{c\in X_0\Go}$ is a family of functions such that $F_c$ is continuous on $\Lambda_{\Go(c)}$, it makes sense to compute the integral:
\begin{equation*}
V_{\Go}(F)(L)= \frac{1}{\#\Aut(\Go)|} \int_{x\in \Lambda_{\Go}(L)} \prod_{c\in X_0\G} F_c(L_c(x)) d\sigma_{\Go}(L),
\end{equation*}
where $L_c:\Lambda_{\Go}\to \Lambda_{\Go(c)}$ is the projection. In particular, we are interested in the function:
\begin{equation*}
\label{formula_ZGo}
V_{\Go}(L)= \frac{1}{\#\Aut(\Go)} \int_{x\in \Lambda_{\Go}(L)} \prod_{\gamma\in X_1^{int}\G} l_\gamma(x)d\sigma_{\Go}(L).
\end{equation*}
\begin{rem}
   When the graph is degenerate, it makes sense to define $V_{\Go}$. But according to Proposition \ref{prop_degenerate_graph_degenerate_edge}, the integral is equal to zero.
\end{rem}

\paragraph{Boundary lengths and wall configurations:}
For each $\Mo$ connected, a wall $W\in \Wall(\Mo)$ is the data of two subsets, $I_1^\pm,I_2^\pm$ such as
\begin{itemize}
\item $\partial^{\pm}\Mo=I_1^\pm\sqcup I_2^\pm$,
\item item If $I^\pm_i$ is empty, the set $I^\mp_i$ must contain exactly one element.
\end{itemize}
We denote $\Lambda_W$ the subset.
\begin{equation*}
\Lambda_W=\{L\in \Lambda_{\Mo} | |L_{I_1^+}|_1= |L_{I_1^-}|_1\}=\{L\in \Lambda_{\Mo}~|~|L_{I_2^+}|_1= |L_{I_2^-}|_1\}.
\end{equation*}
It is a subspace of $\Lambda_{\Mo}$ of codimension one.  To this configuration of walls, we can associate space $\Pc_{\Mo}$ (or $\Pc_{n^+,n^-}$) of continuous piecewise polynomials with walls in this configuration. For each wall $W$, we can define:
\begin{equation*}
L_W=||L_{I_1^+}|_1-|L_{I_1^-}|_1|=||L_{I_2^+}|_1-|L_{I_2^-}|_1|.
\end{equation*}
Then $\Pc_{\Mo}$ is the space of functions on $\Lambda_{\Mo}$, polynomials in the variables $L_W$.
\paragraph{Polynomial behavior of $V_{\Go}(F)$:}
\label{paragraph_polynomial_ZGo}
\begin{thm}
\label{thm_polynomial_ZGo}
Assuming that $\Go$ is acyclic and $F$ is a function on $\Lambda_{\Go}$, polynomial in the the variables  $(l_\gamma)_{\gamma\in X_1\Go}$ and homogeneous of degree $d$. Then the function $V_{\Go}(F)(L)$ is an homogeneous polynomial in the variable $L_W$ of degree $d(\Go)+d$. Where $W$ belongs to the set of faces of the polytope $\Lbord(\Lambda_{\Go})$.
\end{thm}
The function $V_{\Go}(F)(L)$ is then a piecewise polynomial, which is polynomial inside $\Lbord(\Lambda_{\Go})$ and vanishes on the boundary of the polytope.

\begin{rem}[Case of trees]
\label{rem_volume_ZGo_tree}
If $\Go$ is a tree, then the space $\Lambda_{\Go}(L)$ contains at most one element. By using the results of paragraph \ref{paragraph_trees}, we can see that:
\begin{equation*}
V_{\Go}(L)=\prod_\gamma l_{\Go,\gamma}(L).
\end{equation*}
Ended, each edge $\gamma$ spares the graph in two connected components; then it defines a wall $W_{\gamma}(\Go)$ and we have $l_{\Go,\gamma}=L_{W_{\gamma}(\Go)}$. We can see that the image of $L_{\partial}$ is the polytope defined by $\{L\in \Lambda_{\Mo}~|~l_{\Go,\gamma}(L)\ge 0~~\forall \gamma \}$. According to this, we see that we already proved Theorem \ref{thm_polynomial_ZGo} in this case. Moreover, the piecewise polynomial is explicit.
\end{rem}

\paragraph{Proof of theorem \ref{thm_polynomial_ZGo}:}
The proof of Theorem \ref{thm_polynomial_ZGo} relies on Ehrhart theory; it is an application of the following Theorem, which is a continuous version of the theorem $18.1$ given by Barvinok in \cite{barvinok2008integer}.\\

\begin{thm}
\label{thm_barvinock}
Let $(v_{i}(\alpha))_i$ vectors in $\R^n$ that depend on a parameter $\alpha$. Let $P_\alpha$ be a family of polytopes in $\R^n$ defined by
\begin{equation*}
P_\alpha=\text{conv}(v_1(\alpha),...,v_n(\alpha)).
\end{equation*}
 Assume that the cone of feasible directions of $P_\alpha$ at $v_i(\alpha)$ is independent of $\alpha$ for all $i$. Then, there is a polynomial $P$ such as
\begin{equation*}
\vol(P_\alpha)=P(v_1(\alpha),...,v_n(\alpha)).
\end{equation*}
\end{thm}
Then, to prove Theorem \ref{thm_polynomial_ZGo}, it remains to study the structure of the convex polytope $\Lambda_{\Go}(L)$. In our case, the set $V(\Lambda_{\Go}(L))$ of extremal points is related to spanning trees in $\Go$.

\begin{Def}
A spanning tree $\To$ of $\Go$ is a subset $E \in X_1^{int}\Go$ such that $\Go_E$ is a connected directed tree.
\end{Def}
If $\To$ is a spanning tree, according to Lemma \ref{lem_tree_constant}, for each edge $\gamma \in X_1\To=X_1\Go\backslash E$, there is a function
\begin{equation*}
l_{\To,\gamma}: \Lambda_{\Mo}\longrightarrow \Rp.
\end{equation*}
It expresses the length of $\gamma$ in terms of the boundary lengths $\Lbord$. There is a natural inclusion $\Lambda_{\To} \rightarrow \Lambda_{\Go}$, and this gives a map:
\begin{equation*}
x_{\To} : \Lbord(\Lambda_{\To}) \rightarrow \Lambda_{\Go}.
\end{equation*}
It is given explicitly by
\begin{equation*}
l_\gamma(x_{\To}(L))=\left\{
\begin{array}{ll}
l_{\To,\gamma}(L) & \text{if}~~ \gamma \in X_1\To \\
0 & \text{else.}
\end{array}
\right.
\end{equation*}
\begin{lem}
\label{lem_extremal_pts_LambdaGo}
For each $L\in \Lbord\left(\Lambda_{\Go}\right)$, the vectors $x_{\To}(L)$ are well defined and are the extremal points of $\Lambda_{\Go}(L)$.
\end{lem}
\begin{proof}
First of all, we have the following fact $\Lbord\left(\Lambda_{\Go}\right)= \Lbord\left(\Lambda_{\To}\right)$ for each spaning tree $\To$ in $\Go$. Then for each $L\in \Lbord\left(\Lambda_{\Go}\right)$, the element $x_{\To}(L)\in \Lambda_{\Go}$ is well defined. This is a consequence of the acyclicity. Then, we prove that the $(x_{\To}(L))$ are extremal points, for each $L\in \Lbord\left(\Lambda_{\Go}\right)$. Consider an element $u\in K_{\Go}$ such that the ray $x_{\To}(L)+tu$ belongs to $\Lambda_{\Go}(L)$, for $t\in \R$ close enough to the origin; then, we must have $l_\gamma(u)=0$ for all $\gamma\in E_{\To}$. Then $u$ is in the image of $H_1(\To)$. As $\To$ is a tree, $u=0$ and then $x_{\To}(L)$ is an extremal point. Conversely, given an extremal point $x$ let $E=\{\gamma|l_\gamma(x)=0\}$, we have a map $H_1(\Go_{E})\rightarrow H_1(\Go)$. For each $u$ in the image of this map, $x+tu$ belongs to $\Lambda_{\Go}(L)$ for $t$ small enough. Then we must have $H_1(\Go_{E})=0$, and then $\Go_{E}$ is a tree.
\end{proof}

The second ingredient is the following:

\begin{lem}
\label{lem_feasable_dir}
For each $\To$ spanning tree in $\Go$, the cone of feasible directions $\text{fcone}(\Lambda_{\Go}(L),x_{\To}(L))$ does not depend on $L$.
\end{lem}

\begin{proof}
This is straightforward, if $x_{\To}(L)+tu$ is in $\Lambda_{\Go}(L)$ for $t\in \Rp$ small enought iff $l_\gamma(u)\ge 0 , \forall \gamma\in E$ and then:
\begin{equation*}
\text{fcone}(\Lambda_{\Go}(L),x_{\To}(L))\simeq K_{\Go}\cap \{u\in \R^{X_1\Go}~|~l_\gamma(u)\ge 0~~~ \forall \gamma\in E\}
\end{equation*}
\end{proof}

\begin{proof}
With Lemmas \ref{lem_extremal_pts_LambdaGo} and \ref{lem_feasable_dir}, we can apply the Theorem \ref{thm_barvinock} and prove that there is a polynomial $P_{\Go}$ in vectorial variables $X_{\To}$ indexed by the spanning trees. The $X_{\To}$ take their arguments in $\R^{X_1^{int}\Go}$, and we have:
\begin{equation*}
V_{\Go}(1)(L)= P_{\Go}(((x_{\To}(L))_{\To}).
\end{equation*}
The RHS is a polynomial in $(l_{\To,\gamma})_{\gamma}$ and then is in $\Pc_{\Mo}$. To prove the general statement and deal with the factors $\prod_{\gamma}l_\gamma$, we consider: 
\begin{equation*}
\Lambda_{\Go}'(L)=\{(x,y)\in \Lambda_{\Go}(L)\times \R_{+}^{X_1^{int}\Go}~|~y_\gamma \le l_\gamma(x),~~~ \forall~\gamma\in X_1^{int}\Go\}.
\end{equation*}
By direct computation, the volume of $\Lambda_{\Go}'(L)$ is equal to $V_{\Go}(L)$. To treat this case, we just need to slightly change the last Lemmas. The set of extremal points in this case is bigger; each spanning three is associated with $2^{\#X_1^{int}\To}$ extremal points, and at each of these extremal points, the cone of feasible directions remains constant, which gives the proof by applying theorem \ref{thm_barvinock}. We can see that if $L\in \partial \Lbord(\Lambda_{\Go})$ and $x\in \Lambda_{\Go}(L)$, at least one of the $l_\gamma(x), x\in X_1^{int}\Go$ must vanishes, then $V_{\Go}(L)$ vanishes on $\partial \Lbord(\Lambda_{\Go})$.  The case of $F$ polynomials is similar; for a monomial $\prod_\gamma l_\gamma^{n_\gamma}$ we take the product with $\prod_\gamma [0,l_\gamma]^{n_\gamma}$ instead of $\prod_\gamma [0,l_\gamma]$.
\end{proof}

\newpage
\section{Metric ribbon graphs and moduli spaces:}
\label{section_ribbon}
\subsection{Ribbon graphs and metric ribbon graphs}
\paragraph{Combinatorial ribbon graph:}
\label{paragraph_def_combi_rib}
We give a careful definition of ribbon graphs, we use conventions similar to M. Kontsevich in \cite{kontsevich1992intersection}.
\begin{Def}
\label{def_combi_rib}
A combinatorial ribbon graph $R$ is defined by the data $(XR,s_0,s_1,s_2)$, where:
\begin{itemize}
 \item $XR$ is a set of half edges,
     \item $s_0,s_2 : XR \rightarrow XR$ are two permutations that define respectively the vertices and the boundary components (or faces) of the graph.
     \item  $s_1 : XR\rightarrow XR$ is an involution without fixed point,
     \item  These data satisfy the condition: $s_2s_1s_0=id$.
 \end{itemize}
 An isomorphism $\phi : R\rightarrow R'$ is a bijection $\phi : XR\longrightarrow XR$ that preserves these datas. The group $\Autt(R)$ is then the subgroup of $\mathfrak{S}(XR)$ that preserves $(s_2,s_1,s_0)$.
 \end{Def}
We denote $X_iR$ the i-cycles in $R$, $X_iR=XR/\langle s_i\rangle$ in $R$; and for all $e\in XR$ we denote $[e]_i\in X_iR$ the projection. A ribbon graph defines a graph in a natural way (see appendix \ref{paragraph_graph}). The orbits of $s_0$ define a partition of $XR$ indexed by $X_0R$, the blocks of the partition are the vertices of $R$. The involution $s_1$ encodes how to glue two half edges together to obtain an edge, then, $X_1R$ is the set of edges of $R$. The permutation $s_0$ gives an additional structure, a cyclic order on the half edges around each vertex (see figure \ref{figure_combi_ribbon}). As we explain later in more details, the set $X_2R$ represents the boundary components of a tubular neighborhood of $R$, and $s_2e$ is the successor of $e$ in the boundary (see figure \ref{figure_combi_ribbon} and paragraph \ref{zip_rect_RG}). We say that two boundary components $\beta,\beta'\in X_2R$ are adjascent if there is $e\in \beta$ with $s_1(e)\in \beta'$.

\begin{rem}[Decoration]
\label{rem_decoration_ribbon}
A ribbon graph $R$ defines a decoration $\nu_R$ in the sense of paragraph \ref{paragraph_surface_background}:
\begin{equation}
\nu_R(i)=\text{number of vertices of degree $2i+2$}.
\end{equation}
\end{rem}

\begin{rem}[Dual]
\label{rem_dual_ribbon_graph}
    A ribbon graph $R=(XR,s_0,s_1,s_2)$ admits a dual $R^*$ given by $XR^*=XR$ and $s_0^*=s_2^{-1}, s_2^*=s_0^{-1}, s_1^*=s_1$. In this picture, we have $X_0R^*=X_2R$ and $X_2R^*=X_0R$.
\end{rem}

\begin{figure}
     \centering
     \begin{subfigure}{0.4\textwidth}
        \includegraphics[width=\textwidth]{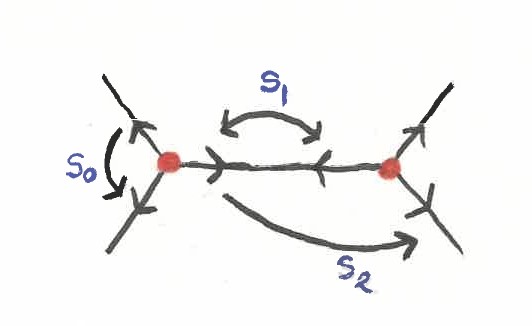}
        \caption{The three permutations\\ $(s_0,s_1,s_2)$.}
        \label{figure_combi_ribbon}
     \end{subfigure}
     \hfill
     \begin{subfigure}{0.4\textwidth}
        \includegraphics[width=\textwidth]{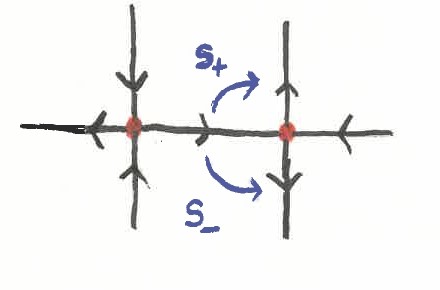}
        \caption{Combinatorix of directed\\ ribbon graphs}
        \label{figure_oriented_ribbon_schemas}
     \end{subfigure}
\end{figure}

\paragraph{Directed ribbon graph:}
\label{paragraph_def_oriented_RG}
\begin{Def}
\label{def_orientation_ribbon}
A direction $\epsilon$ on a ribbon graph $R$ is a map $\epsilon : XR\rightarrow \{\pm 1\}$, such that
\begin{equation*}
\epsilon\circ s_2=\epsilon~~~\text{and}~~~\epsilon\circ s_1=-\epsilon.
\end{equation*}
A ribbon graph is dirigible if it admits a direction, and a directed ribbon graph $\Ro$ is a couple $(R,\epsilon)$.
\end{Def}

\begin{figure}
\centering
\includegraphics[height=3cm]{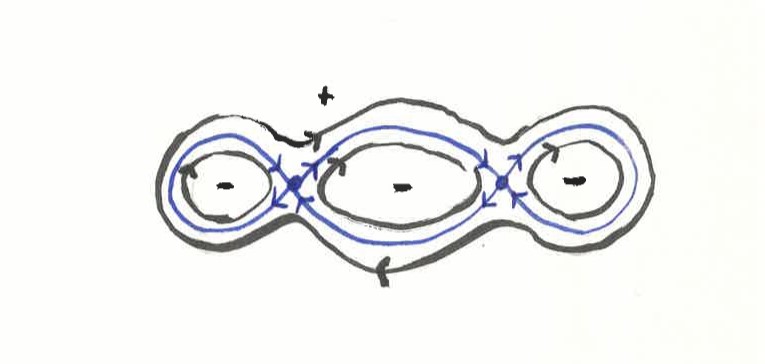}
\caption{A directed ribbon graph}
\label{fig_orientation_graph}
\end{figure}

Let $\Ro$ be a directed ribbon graph, we denote: $X^{\pm}\Ro=\{e\in XR~,~\epsilon(e)=\pm 1\}.$ It's sometimes convenient to use the two "zig-zags"\footnote {The terminology zig-zag comes from the article \cite{goncharov2021spectral}} permutations (see figure \ref{figure_oriented_ribbon_schemas} and \ref{figure_zippered_oriented}),
\begin{equation}
\label{formula_s+-}
s_+=s_2,~~~\text{and}~~~s_-=s_1s_2^{-1}s_1.
\end{equation}
We summarize some elementary properties of dirigible ribbon graphs in the following proposition.
\begin{prop}
\label{prop_orientation_list}
\begin{enumerate}
\item If $R$ is connected, it admits at most two orientations.
\item If $R$ is dirigible, then $R$ has only vertices of even degree.
\item A direction $\epsilon$ on $R$ induces a non-constant map
\begin{equation*}
\epsilon : X_2R \longrightarrow \{\pm 1\},
\end{equation*}
which defines a partition of the set of boundary components into two non-empty sets $X_2R=X_2^+R\sqcup X_2^-R$.
\item If $R$ is not dirigible, there is a canonical double cover $\tilde{R}$ that is dirigible and ramified over the vertices of odd degree.
\item A ribbon graph is dirigible iff its dual is bipartite, i.e., there is a map $\epsilon : X_0R^*\rightarrow \{\pm 1\}$ and two vertices joined by an edge have opposite signs.
\end{enumerate}
\end{prop}
\begin{proof}
    We prove $3$ and $4$. A direction $\epsilon$ on $R$ is invariant under $s_2$ and then induces a map $\epsilon : X_2R\to \{\pm 1\}$. To construct the double cover, we set $X\Rt=XR\times \{\pm 1\}$ and define $\tilde{s}_1(e,\epsilon)=(s_1e,-\epsilon),~\tilde{s}_0(e,\epsilon)=(s_0e,-\epsilon)$. These data define a ribbon graph $\tilde{R}$, the first projection defines a morphism from $\Rt$ to $R$ and the second projection defines a direction on $\Rt$. We can check that $\Rt$ satisfies the following universal property: if $R'\to R$ is a morphism of ribbon graphs and $R'$ is a dirigible graph, then the morphism factor through $\Rt$. 
\end{proof}
\paragraph{Zippered rectangles and embedded ribbon graphs}
\label{zip_rect_RG}

A ribbon graph $R$ naturally defines a surface $M_R$ in $\bord$, its \textit{topological realization}. For each $e\in XR$ we can consider a rectangle $\mathbf{R}_e=[0,1]\times [-1,1]$, we glue these rectangles in the following way: we identify  $\{0\}\times [0,1] \in \mathbf{R}_e$ to $\{0\}\times [-1,0] \in \mathbf{R}_{s_1e}$. Then we identify $\mathbf{R}_e$ and $\mathbf{R}_{s_1e}$ by a rotation of angle $\pi$ (see figure \ref{figure_zippered_non_directed}). We obtain in this way a surface $M_R$ in $\bord$, and the image of the lines $[0,1]\times \{0\}$ defines an embedded topological graph on which $M_R$ retracts. We can identify $X_0R$ with the set of vertices, $X_1R$ with the edges and $X_2R$ with the boundary components. We denote $g(R)$ the genus of $M_R$ and $n(R)=\#X_0R$ the number of boundary components.

\begin{figure}[h!]
    \centering
    \includegraphics[width=0.5\linewidth]{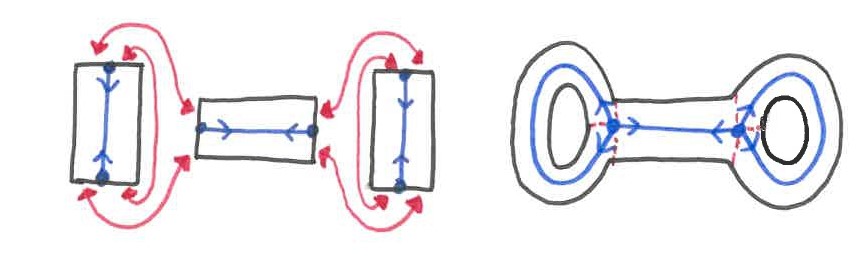}
    \caption{Zippered rectangles for a ribbon graph.}
    \label{figure_zippered_non_directed}
\end{figure}
\begin{rem}[Functoriality]
If $\phi: R_1\to R_2$ is a morphism of ribbon graphs, it induces a covering $\phi: M_{R_1}\rightarrow M_{R_2}$ possibly ramified over the vertices of $R_2$.
\end{rem}

If $\Ro$ is a directed ribbon graph, we can perform a slightly different construction. We consider rectangles $\mathbf{R}_e$ indexed by positive half-edges $e\in X^+\Ro$, and we glue these rectangles according to figure \ref{figure_zippered_oriented} by using $s_+$ and $s_-$. In this case we do not rotate the rectangles, then we can orient the edges from left to right. Moreover, some boundaries are at the top of the rectangles and others are at the bottom. The first ones are labeled by $+$ and the second by $-$, we do not flip the rectangles, so the choice is consistent. Then a directed ribbon graph $\Ro$ defines a directed surface $\Mo_{\Ro}$, we can identify $X_2R$ and $\partial M_R$ (see figure \ref{figure_zippered_oriented}). This gives an explanation of points $3$ and $5$ in Proposition \ref{prop_orientation_list}.

\begin{rem}[Double cover $\tilde{M}_R$]
\label{rem_tilde{M}_R}
We remark that if $R$ is not necessarily dirigible, we can consider its directed cover $\Rt$. The surface $\tilde{M}_R=M_{\Rt}$ is then a ramified double cover over $M_R$, by functoriality, the involution $\sigma_R$ induces an involution $\sigma_R$ in $\tilde{M}_R$ and $M_R$ is the quotient.
\end{rem}

\begin{rem}[Surface $M_R^{\bullet}$]
\label{rem_M_R^{bullet}}
We can also consider rectangles $\mathbf{R}_e^\bullet=[0,1]\times \R$ and glue them in a similar way. The resulting surface is in $\bordb$ and will be denoted $M_R^{\bullet}$
\end{rem}

If $M\in \bord$ and $R$ is a ribbon graph without univalent or bivalent vertices, an embedding of $R$ is an isotopy class of homeomorphims $\phi: M_R\to M$. We denote $\Rib(M)$ the set embedded ribbon graphs. If $\Mo$ and $\Ro$ are directed, we assume that the homeomorphisms preserve the sign of the boundary components, we denote $\Rib(\Mo)$ the set of embedded direction ribbon graphs on $\Mo$. The mapping class group of $M$ acts on $\Rib(M)$ and $\Rib(\Mo)$, we denote $\rib(M)$ and $\rib(\Mo)$ the quotients. We also use the notations $\rib_{g,n}$ (resp. $\rib_{g,n^+,n^-}$) for ribbon graphs of genus $g$ with $n$ labeled boundary components (resp. directed with $n^+$ positive and $n^-$ negative labeled boundary components).

\begin{rem}[Marked points]
\label{rem_marked_point_embedded_ribbon}
 When we consider ribbon graphs with vertices of degree one or two, to perform surgeries, we must consider the vertices as marked points on the surface $M_R$. Then, if $M$ (resp. $\Mo$) has marked points, we consider homeomorphisms $M_R\to M$ such that the preimage of marked points are vertices of $R$. We allow marked vertices to be of degree one or two. 
\end{rem}

\begin{figure}[h!]
\centering
\includegraphics[height=5cm]{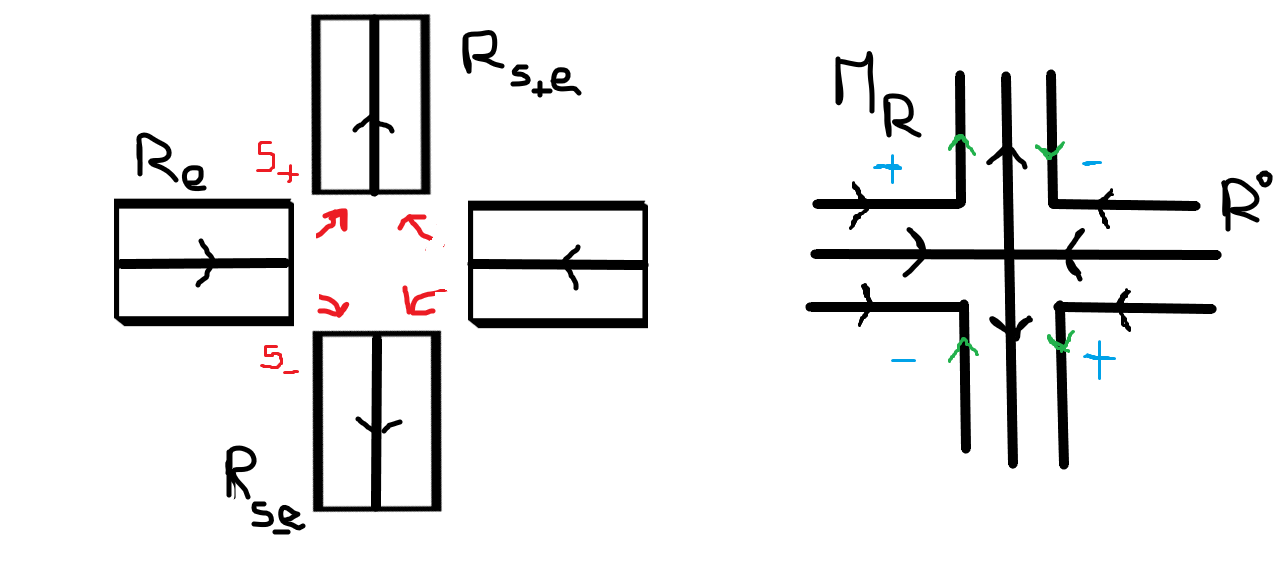}
\caption{Zippered rectangles for a directed ribbon graph $\Ro$, orientation of the boundary components induced by the orientation of the surface $M_R$ in green, direction given by $\Ro$ in black, sign's in blue.}
\label{figure_zippered_oriented}
\end{figure}
\paragraph{Metric ribbon graph and boundary lengths:}
A metric $m$ on a ribbon graph $R$ is just a map 
\begin{equation*}
m: X_1R \longrightarrow \Rpp.
\end{equation*}
We denote, respectively, $\Met(R)=\R_{>0}^{X_1R}$ and $T_R=\R^{X_1R}$, the cone of metrics on $R$ and $T_R$ the tangent space. We also use the notation $m_e: \Met(R)\rightarrow \R,$ for the canonical coordinates. The following lemma gives the dimension of a cell according to the topology of the ribbon graph.
\begin{lem}
\label{lem_dim_TR}
Let $\db(R)$ the dimension of $T_R$ we have: 
\begin{equation}
\label{formula_db(R)}
    \db(R)=2g(R)-2+n(R)+n(\nu_R).
\end{equation}
\end{lem}
\begin{proof}
    Clearly, we have $\dim T_R=\#X_1R$, a ribbon graph defines a cell decomposition of $M_R^{cap}$ and this can be used to compute the Euler characteristic we have in the connected case 
    \begin{equation*}
        2g(R)-2=-\#X_0R+\#X_1R-\#X_2R=-n(\nu_R)+\dim T_R - n(R).
    \end{equation*}
\end{proof}
Let $M$ in $\bordb$ and $\beta\in \partial M$. For each embedded metric ribbon graph $S=(R,m)$ on $M$, it's possible to compute the length of $\beta$ by summing the lengths of the edges that are adjacent to the boundary component
\begin{equation}
\label{formula_lbeta}
l_\beta(S)= \sum_{e \in XR,~[e]_2=\beta} m_{[e]_1}(S).
\end{equation}
This defines a linear function $l_\beta: \Met(R)\to\Rpp$, we denote $L_\partial=(l_\beta)_{\beta\in \partial M}$ and $T\Lbord$ the tangent map. Let $I_R=T\Lbord(T_R)$ be the image of $T\Lbord$  and $K_R$ its kernel. We have the trivial, exact sequence
\begin{equation}
\label{formula_exact_sequence_KR}
\{0\}\longrightarrow K_R \longrightarrow T_R\longrightarrow I_R \longrightarrow \{0\}.
\end{equation}
We give in Proposition \ref{prop_dimorientability} the dimension of $I_R$ and $K_R$.
\paragraph{Cohomology of a ribbon graph:}
\label{paragraph_coho_ribbongraph}
We construct the cohomology groups of a ribbon graph; it simplifies some construction and will be used in Proposition \ref{prop_dimorientability} and Lemma \ref{lem_deg_symp_pair}. We start with the case of a directed graph $\Ro$, the construction is straightforward. The complex of chains $C_*(\Ro)$ given by:
\begin{equation*}
C_0(\Ro)=\R^{X_0R},~~~C_1(\Ro)=\R^{X_1^+\Ro},~~~\text{and}~~~C_2(\Ro)=\R^{X_2R}.
\end{equation*}
We define boundary operators, for each $\beta \in X_2R$ and $e\in X_1^+R$
\begin{equation*}
\partial \beta = -\sum_{e',[e']_2=\beta}\epsilon(e')[e']_1^+,~~~\text{and}~~~\partial e = [s_1e]_0-[e]_0.
\end{equation*}
The complex of cochain's $C^*(\Ro)$ is defined by duality, and we denote $H_*(R)$ and $H^*(R)$ the homology and cohomology groups\footnote{the construction does not depend of the choice of the direction, that why we use the notation $H(R)$ instead of $H(\Ro)$}. Using the fact that $\Ro$ defines a cell decomposition, we can see that this complex computes the homology and cohomology of $M_R^{cap}$, which is the surface obtained after gluing a disc on each boundary component of $M_R$. 

Similarly, we can consider relative homology and cohomology group. Let $C_*(X_0R)$ be the complex with only one non trivial element given by $C_0(X_0R)=C_0(\Ro)$ and $C_*(\Ro,X_0R)=C_*(\Ro)/ C_*(X_0R)$ the relative complex. This complex computes the homology $H_*(M_R^{cap},X_0R,\R)$. We can also define the complex of cochains $C^*(X_2R)$ with only one non trivial element $C^2(X_2R)=\R^{X_2R}$, and we form:
\begin{equation*}
C^*(R,X_0R,X_2R)=C^*(R,X_0R)/ C^*(X_2R),
\end{equation*}
$C_*(\Ro,X_0R,X_2R)$ is defined by duality. These complexes compute $H^*(M_R,X_0R,\R)$ and $H_*(M_R,X_0R,\R)$. To summaries we have isomorphisms
\begin{equation*}
    H^*(R,X_0R)\simeq H^*(M_R^{cap},X_0R,\R)~~~\text{and}~~~H^*(R,X_0R,X_2R)\simeq H^*(M_R,X_0R,\R)
\end{equation*}
 For each edge $e\in X_1R$ there is a unique directed edge $e_+\in X^+\Ro$ with $[e_+]_1=e$, it defines a cycle $[e_+]\in H_1(M_R,X_0R,\R)$ and these cycles form a basis of the homology. Then by duality, we can identify $T_R$ and $H^1(R,X_0R,X_1R)\simeq H^1(M_R,X_0R,\R)$, we denote $f_{\Ro}$ the map\footnote{This map depend of the choice of a direction $f_{-\Ro}=-f_{\Ro}$}:
\begin{equation*}
    f_{\Ro} : T_R \to H^1(R,X_0R,X_1R),
\end{equation*}
 which is defined by:
 \begin{equation*}
     \langle f_{\Ro}(x),[e_+]\rangle= x_e\quad \forall e\in X_1R.
 \end{equation*}
In general, to construct the cohomology of a ribbon graph, we can use the directed double cover $\Rt$. The involution $\sigma_R$ acts on the different homology group and satisfies $\sigma_R[e_+]=-[\sigma_R(e)_+]$, then it forces to use the anti-invariant cohomology. We can decompose the cohomology according to the eigenvalues of $\sigma_R$ into even and odd parts, and we denote $H^*(R)=H^*(\Rt)^-$. As before, we can still identify $H^1(R,X_0R,X_2R)$ with $T_R$. We denote
\begin{equation*}
    f_R: T_R \to H^1(R,X_0R,X_2R),
\end{equation*}
each edge $e\in XR$ defines an element $[e]$ of $H_1(R,X_0R,X_2R)$ and we have $x_e=\langle f_R(x),[e]\rangle$. We $R$ is dirigible; this construction coincides with the one given precedently.

\paragraph{Lengths of boundary components and dirigibility:}
\label{paragraph_orient_boundaries}
The fact that the dual of a directed ribbon graph is bipartite implies the following lemma:
\begin{lem}
\label{lem_orientation_relation}
Let $\Mo$ be a connected directed surface, and $\Ro=(R,\epsilon)\in \Rib(\Mo)$; we have on $\Met(R)$
\begin{equation}
\label{formula_sum_zero_oriented}
\sum_{\beta \in \partial M} \epsilon(\beta) l_\beta= 0.  
\end{equation}
\end{lem}
Then the image of the application $L_{\partial}$ lies in the hyperplane $\Lambda_{\Mo}$ (equation \ref{formula_LambdaMo}) we have $I_R\subset T_{\Mo}$.
\begin{proof}
By Proposition \ref{prop_orientation_list} each edge $e\in X_1R$ is contained in exactly one positive and one negative boundary, then for each $m\in \Met(R)$ we have
 \begin{equation*}
\sum_{\beta,\epsilon(\beta)=1}l_\beta= \sum_{e}m_e=\sum_{\beta,\epsilon(\beta)=-1}l_\beta
\end{equation*}
which give the claim.
\end{proof}

Each $\beta$ defines an element $[\beta]\in H_1(R,X_0R,X_2R)$; moreover, we can see that:
\begin{equation*}
    dl_\beta(x)=\langle f_{R}(x),[\beta]\rangle.
\end{equation*}
The next proposition characterizes dirigibility, and despite its simplicity, it is very useful for us.

\begin{prop}
\label{prop_dimorientability}
Let $R$ be a connected ribbon graph not necessarily dirigible with $n$ boundary components. The dimension of $I_R$ is
\begin{itemize}
\item $n$ if $R$ is not dirigible,
\item $n-1$ if $R$ is dirigible and the only relation is given by the direction
\begin{equation*}
\sum_\beta \epsilon(\beta)dl_\beta=0.
\end{equation*}
\end{itemize}
\end{prop}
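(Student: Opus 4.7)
The plan is to compute $\dim H_R$ by identifying the kernel of the linear map
$$\Phi : \R^{X_2R} \longrightarrow T_R^*, \qquad (c_\beta)_{\beta \in X_2R} \longmapsto \sum_\beta c_\beta\, dl_\beta,$$
so that $\dim H_R = n - \dim\ker\Phi$. Working in the basis of $T_R^*$ dual to the edge-length coordinates on $T_R$, the component of $dl_\beta$ along the edge $[e]_1 \in X_1R$ equals the number of half-edges of $[e]_1$ lying in the boundary $\beta$. Consequently, $c \in \ker\Phi$ if and only if for every edge with half-edges $e$ and $s_1 e$ one has $c_{[e]_2} + c_{[s_1 e]_2} = 0$; a self-adjacent edge (both half-edges in the same boundary $\beta$) simply enforces $c_\beta = 0$.

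Next I would reinterpret this as a condition on the dual graph $D(R)$ whose vertex set is $X_2R$ and which has one edge joining $[e]_2$ and $[s_1 e]_2$ for each $e \in X_1R$ (a loop if these boundaries coincide). Then $\ker\Phi$ is precisely the space of functions $c : X_2R \to \R$ that are antisymmetric along every edge of $D(R)$. Since $R$ is connected, $D(R)$ is connected (the dual graph of a cell embedding on a connected surface always is), so propagating the sign-alternation along a spanning tree determines $c$ from its value at any single vertex; this propagation is globally consistent iff it closes up around every cycle, i.e.\ iff $D(R)$ is bipartite. In particular $\dim\ker\Phi \le 1$, with equality precisely when $D(R)$ is bipartite.

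Finally I would match bipartiteness with orientability. By the conditions $\epsilon \circ s_1 = -\epsilon$ and $\epsilon \circ s_2 = \epsilon$ from paragraph~\ref{orientation_coverRG}, an orientation of $R$ is exactly a $\{\pm 1\}$-valued function on $X_2R$ whose values on the two endpoints of any edge of $D(R)$ are opposite, i.e.\ a proper two-coloring of $D(R)$. Therefore $R$ is orientable iff $D(R)$ is bipartite. In the orientable case, $\epsilon$ itself spans $\ker\Phi$, yielding $\dim H_R = n-1$ together with the stated relation $\sum_\beta \epsilon(\beta)\,dl_\beta = 0$; in the non-orientable case, $D(R)$ has an odd closed walk, so $\ker\Phi = 0$ and $\dim H_R = n$.

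The only mild subtlety is handling self-adjacent edges uniformly: they produce loops in $D(R)$ which automatically kill bipartiteness, matching the fact that orientability already forbids them (as they would force $\epsilon(e) = -\epsilon(e) = 0$), so no separate case analysis is required.
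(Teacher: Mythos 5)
Your proposal is correct and follows essentially the same route as the paper: both reduce the question to identifying the kernel of $(c_\beta)\mapsto\sum_\beta c_\beta\,dl_\beta$, observe that the coefficient of $dm_{[e]_1}$ is $c_{[e]_2}+c_{[s_1e]_2}$, and recognize the vanishing of these coefficients as exactly the defining conditions $\epsilon\circ s_2=\epsilon$, $\epsilon\circ s_1=-\epsilon$ of an orientation, so the kernel is at most one-dimensional on a connected graph and is spanned by $\epsilon$ precisely when $R$ is orientable. Your reformulation via bipartiteness of the dual graph and the spanning-tree propagation just makes explicit the paper's terser appeal to uniqueness of the orientation up to sign.
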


\begin{proof}
We give a cohomological proof. From paragraph \ref{paragraph_coho_ribbongraph}, $T_R$ is identified with $ H^1(R,X_0R,X_2R)$. The short, exact sequence of complexes
\begin{equation*}
\{0\}\longrightarrow C^*(X_2R)\longrightarrow C^*(R,X_1R)\longrightarrow C^*(R,X_1R,X_2R)\longrightarrow
\{0\},
\end{equation*}
leads to a long, exact sequence:
\begin{equation*}
\{0\}\longrightarrow H^1(R,X_0R)\longrightarrow H^1(R,X_0R,X_2R) \longrightarrow H^2(X_2R) \longrightarrow H^2(R,X_0R)\longrightarrow \{0\}.
\end{equation*}
We can identify $H^2(X_2R)\simeq \R^{\partial M}, H^1(R,X_0R,X_2R)\simeq T_R, H^2(R,X_0R)\simeq H^2(R)$. The second nontrivial map in the sequence corresponds to the evaluation of $T\Lbord$. Then we can rewrite the sequence in the following way:
\begin{equation*}
\{0\}\longrightarrow K_R \longrightarrow T_R\overset{T\Lbord}{\longrightarrow}  \R^{\partial M}\longrightarrow H^2(R)\longrightarrow \{0\}.
\end{equation*}
Finally, $H^2(R)$ can be identified with $H^2(M_{\Rt}^{cap},\R)^-$; the surface $M_{\Rt}^{cap}$ is connected if $R$ is non-dirigible, otherwise, it has two connected components exchanged by the involution $\sigma_R$. Then we have
\begin{equation*}
H^2(R) = \left\{
\begin{array}{ll}
\R & \mbox{if R is dirigible } \\
0 & \mbox{else.}
\end{array}
\right.
\end{equation*}
The obstruction to being dirigible is then in $H^2(R)$. To conclude, from the exact sequence we obtain
\begin{equation*}
\dim I_R = \#\partial M - \dim H^2(R),
\end{equation*}
which proves Proposition \ref{prop_dimorientability}. According to Lemma \ref{lem_orientation_relation}, the only possible relation is given by the direction.
\end{proof}

\begin{rem}
\label{rem_KR_H(R,X0R)}
    According to the proof of Proposition \ref{prop_dimorientability}, we obtain that the map $f_R$ induces an isomorphism
\begin{equation*}
\label{formula_K_R_cohomology}
f_R: K_R\to H^1(R,X_0R).
\end{equation*}
\end{rem}

\subsection{Relations between metric ribbon graphs, weighted multi-arcs, and measured foliations}

\paragraph{Ribbon graphs and filling multi-arcs:}
\label{paragraph_ribbon_filling_arc}
In this part, we picture the relation between multi-arcs and ribbon graphs; similar results are also presented in \cite{arbarello2011geometry} and at several other places in the literature.
\begin{lem}
\label{lem_ribbon_multiarc_1}
Let $M\in \bord$ each embedded ribbon graph $R$ in $M$, we can associate a multi-arc $A(R)$.
\end{lem}
We give an illustration of this construction in figure \ref{fig_ribbon_arc}.
\begin{proof}
 Let $R$ be an embedded ribbon graph on $M$ with no vertices of degree one or two. For each edge $e\in X_1R$, there is a unique arc $e^*\in \A(M)$ in $\mathbf{R}_e$ that joins the two horizontal boundaries of the rectangle $\mathbf{R}_e$ (namely $\{1/2\}\times [0,1]$). The arc $e^*$ intersects $e$ and no other edge of the graph. Then the union of all the arcs $e^*$ forms a multi-arc $A(R)$ (see figure \ref{fig_ribbon_arc}). The fact that the arcs are non trivial and pairwise non homotopic is a consequence of the Bigon criterium (see Remark \ref{rem_bigon_criterion_2}). We can see that the construction is compatible with isotopies, then the map $ A:~\Rib(M)\longrightarrow \MA(M)$ is well defined \footnote{When there is vertices of degree one or two, $A(R)$ is still a multi-arc, but on the surface obtained by removing the corresponding points (we make this choice to be consistent with the non triviality of the arcs; see Remark \ref{rem_bigon_criterion_2}).}.
\end{proof}

A ribbon graph defines a multi-arc, but the converse is not always true; the map $A$ is not surjective. We give the following definition:

\begin{Def}
\label{def_filling_multiarc}
A multi-arc $A \in \MA(M)$ is filling\footnote{The terminology is borrowed from the theory of measured foliations \cite{lindenstrauss2008ergodic} but such multi-arcs are also called proper \cite{arbarello2011geometry}} if $\iota(A,\gamma)>0$ for all $\gamma\in \Sit(M)$. We denote $\MAo(M)$ and $\MAor(M)$ the subset of filling multi-arcs and weighted multi-arcs.
\end{Def}

The following proposition relies on filling multi-arcs and ribbon graphs:

\begin{prop}
\label{prop_filling_arc_rib}
A multi-arc $A\in \MA(M)$ is filling iff $A=A(R)$ for an embedded ribbon graph $R$, the map
\begin{equation*}
A: \Rib(M)\rightarrow \MA^0(M),
\end{equation*}
is a bijection, and we denote $R$ the inverse. If $\Mo$ is directed we also have the identification $\Rib(\Mo)\simeq \MA^0(\Mo)$.
\end{prop}

This proposition is a consequence of the following criteria to check if a multi-arc is filling or not. We recall that for all multi-arc $A$ on $M$, $M_A$ is the surface obtained after surgeries along the arcs in $A$ (see Remark \ref{rem_bigon_criterion_2}).

\begin{lem}
\label{lem_filling_disc}
A multi-arc is filling iff all the components of $M_A$ are topological discs with at most one interior marked point.
\end{lem}

\begin{figure}
    \centering
    \includegraphics[width=0.4\textwidth]{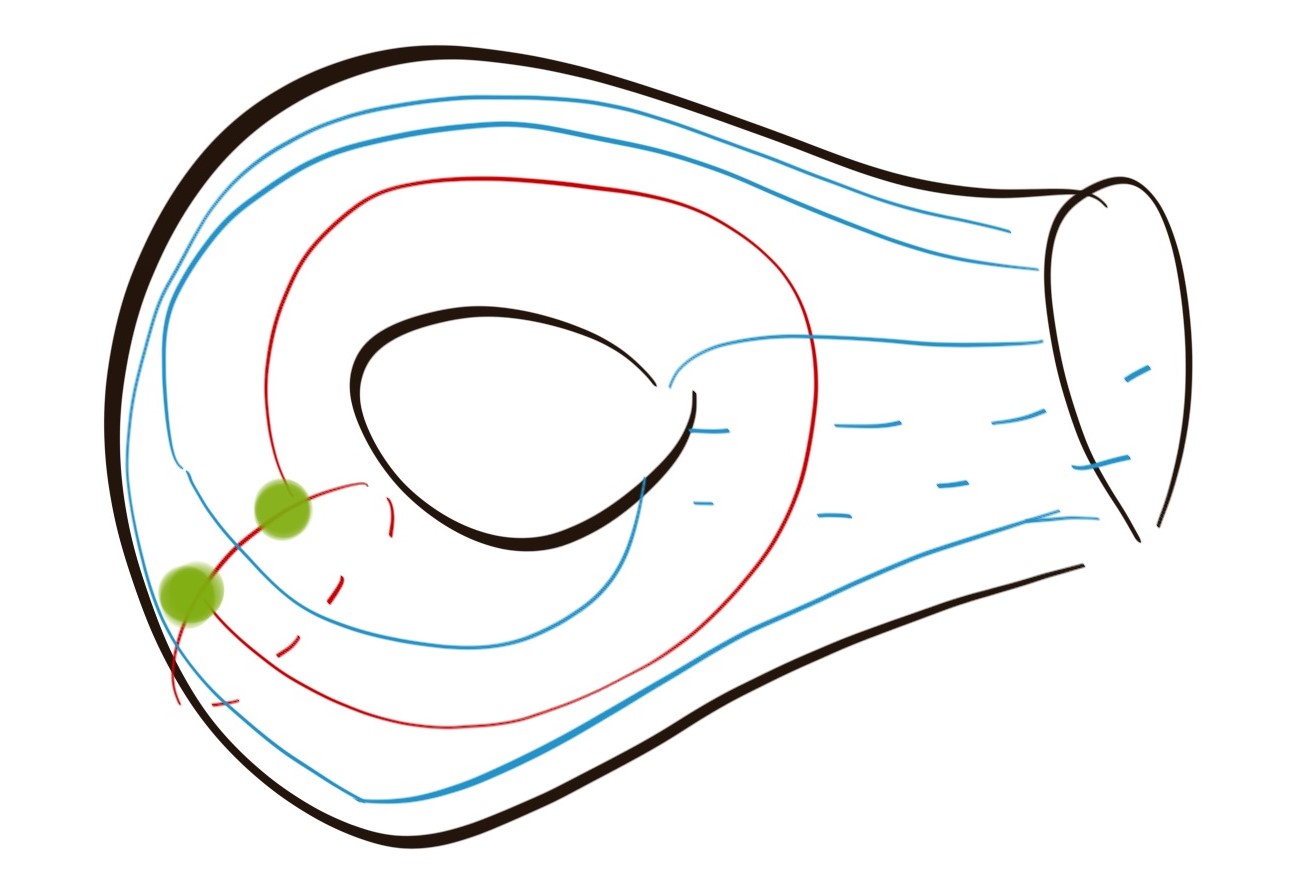}
    \caption{Ribbon graph and the multi-arc on a torus.}
    \label{fig_ribbon_arc}
\end{figure}

\paragraph{Metric ribbon graphs and filling foliations:}
\label{paragraph_ribbon_filling_fol}
According to the results of the last paragraph, a ribbon graph defines a filling multi-arc; similarly, a metric ribbon graph $S$ defines a weighted multi-arc 
\begin{equation*}
    A(S) = \underset{e\in X_1R}{\sum}m_e(S) e^*.
\end{equation*}
The map $ A:~S\rightarrow A(S)$ induces a bijection between embedded metric ribbon graphs and filling weighted multi-arcs. We can also define the notion of filling foliations by replacing $A$ by $\lambda$ in Definition \ref{def_filling_multiarc}.

\begin{prop}
\label{prop_ribbon_fol}
To each embedded metric ribbon graph $S$ in $M$, we can associate a filling foliation $\lambda(S)\in \MF^0(M)$ such that
\begin{equation*}
\iota(\lambda(S),\gamma)= \iota(A(S),\gamma),~~~\forall \gamma\in \tilde{\Si}(M).
\end{equation*}
And foliation $\lambda\in \MF(M)$ is filling iff $\lambda=\lambda(S)$ for a metric ribbon graph $S$.\footnote{This construction gives the map $\MA^{0}(M)\longrightarrow \MF(M)$ that preserves the intersection pairing. A generalization of this construction allows to prove the statement of paragraph \ref{paragraph_multiarc_fol} in general. The converse map is then given by the map $A$ defined in Proposition \ref{prop_zippered_fol_general}, which gives a bijection between filling weighted multi-arcs and filling foliations,
\begin{equation*}
A : \MF^0(M)\longrightarrow \MA^0_\R(M).
\end{equation*}}
There exist a unique quadratic differential $q_S(0)\in \QT(M)$ such that 
\begin{equation*}
    \text{Re} q_S(0)=\lambda(S),\quad \text{and} \quad \text{Im} q_S(0)=\lambda(0).
\end{equation*}
And we have 
\begin{equation*}
    \Res_{\beta^{\bullet}}q_S(0)=l_\beta(S),~~\forall \beta \in \partial M.
\end{equation*}
If $\So$ is a directed metric ribbon graphs, we can associate a directed foliation $\lambda^{\circ}(\So)$ and an abelian differential $\alpha_{\So}(0)$. Moreover, we have: 
\begin{equation*}
\Res_{\beta^{\bullet}}q_S(0)=\epsilon(\beta)l_\beta(S),~~\forall \beta \in \partial M,
\end{equation*}
we orient the boundary components according to the orientation of $M$.
\end{prop}

\begin{proof}
    We use the zippered rectangles construction of paragraph \ref{zip_rect_RG}. Let $(R,m)$ be a metric ribbon graph. For each $e\in XR$, we consider on $\mathbf{R}_e^{\bullet}$ (see Remark \ref{rem_M_R^{bullet}}) the measured foliation given by $m_e dx$, this choice is compatible with the gluings and defines a measured foliation in $\MF(M_R)$, in the sense of section \ref{subsection_foliation}. We can check that, at a vertex of degree $k$, the foliation has a singularity of order $\frac{k-2}{2}$. Similarly, we can define $q_S(0)$ locally by $q_S(0)_{|\mathbf{R}_e^{\bullet}}=m_edz^2$ (this choice is compatible with the gluings because $dz^2$ is invariant by a rotation of angle $\pi$). It has a double pole at each marked point $\beta^{\bullet}$ for $\beta\in \partial M$. We can see that $q_S(0)$ is Jenkin-Strebel; leaves of the horizontal foliation are periodic, and all non-singular trajectories are circles around double poles. Then the horizontal foliation is the trivial foliation $\lambda(0)$. The fact that a filling foliation is given by a weighted multi-arcs is a consequence of Proposition \ref{prop_zippered_fol_general}. In the case of directed ribbon graphs $\Ro$, the gluings preserve the abelian differential defined locally by $\alpha_{\So}(0)_{|\mathbf{R}_e^{\bullet}}=m_edz$. 
\end{proof}

\subsection{Moduli spaces of metric ribbon graphs and their volumes}
\label{moduli_space_volumes}
\paragraph{Teichmüller spaces:}
Let $M\in \bord$, the Teichmüller space $\Tc(M)$ of metric ribbon graphs on $M$ is the space of all embedded metric ribbon graphs in $M$. It is a disjoint union of cells:
\begin{equation}
 \label{formula_Tc(M)}
\Tc(M)=\underset{R\in \Rib(M)}{\bigsqcup}\Met(R).
\end{equation}
A ribbon graph $R$ can degenerate into another ribbon graph $R_{\langle E\rangle}$ by contracting a set of edges $E$ that form a disjoint union of sub-trees $E$ (see Figure \ref{figure_pant_degeneration}). If $M\in \bordb$ we do not allow to contract an edge that joins two marked vertices. The quotient of a ribbon graph is also a ribbon graph (see paragraph \ref{paragraph_graph}); and we have a map
\begin{equation*}
\Met(R_{\langle E\rangle})\longrightarrow \Metb(R),
\end{equation*}
with $\Metb(R)=\Rp^{X_1R}\backslash \{0\}$. Degenerations of ribbon graphs induce a structure of cell complex on $\Tc(M)$. 
The top cells in $\Tc(M)$ correspond to the subset $\Rib^*(M)$ of embedded ribbon graphs that are trivalent with univalent vertices at the marked points. If $M$ is of type $(g,n,m)$, using Lemma \ref{lem_dim_TR} and Proposition \ref{prop_dimorientability} we obtain
\begin{equation*}
    \dim(T_R)= 6g-6+3n+2m~~~\text{and}~~~\dim K_R= 6g-6+2n+2m.
\end{equation*}
if $R$ in $\Rib^*(M)$. Then, in this case, the combinatorial moduli space is a cell complex of dimension $6g-6+3n+2m$.\\
\begin{figure}
\centering
\includegraphics[width=0.5\textwidth]{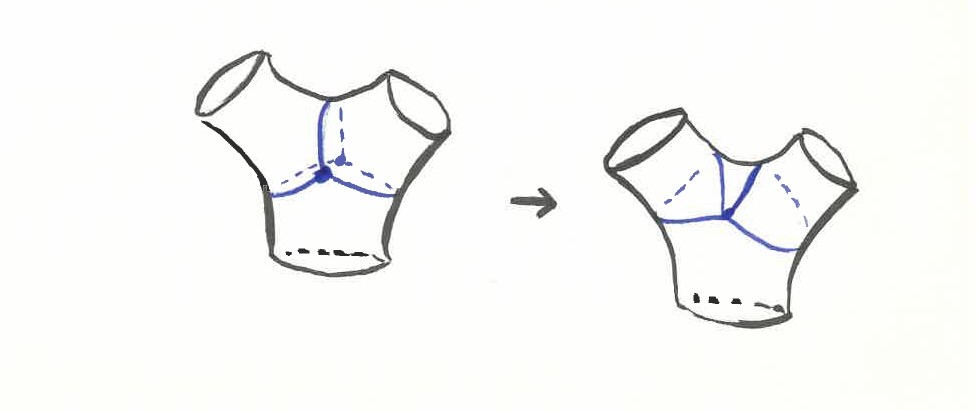}
\caption{Degeneration of a ribbon graph on a pair of pant's.}
\label{figure_pant_degeneration}
\end{figure}
If $\Mo$ is a directed surface, we denote $\Tc(\Mo)$ the Teichmüller space of directed metric ribbon graphs on $\Mo$:
\begin{equation}
\label{formula_Tc(Mo)}
\Tc(\Mo)=\underset{\Ro\in \Rib(\Mo)}{\bigsqcup}\Met(R).
\end{equation}
We can see that a degeneration of a directed graph is also directed and this defines a structure of cell complex on $\Tc(\Mo)$ and the projection
\begin{equation*}
 \Tc(\Mo)\rightarrow \Tc(M)
\end{equation*}
defines a subcomplex of $\Tc(M)$. In the case of directed ribbon graphs, the top cells correspond to quadrivalent directed graphs with bivalent vertices at the marked points. In this case, the dimension of a top cell is given by:
\begin{equation*}
\dim T_R= 4g-4+2n^++2n^-+m~~~~ \dim K_R= 4g-3+2n^++2n^-+m.
\end{equation*}
Then $\Tc(\Mo)$ is of dimension $4g-4+2n^++2n^-+m$.

\paragraph{Combinatorial Teichmüller spaces and measured foliations:}
\label{paragraph_combiteich_folliation}
This part is just a reformulation of the results of Proposition \ref{prop_filling_arc_rib} and \ref{prop_ribbon_fol}. There is two injective maps
\begin{equation*}
    \Tc(M)\longrightarrow \MA_\R(M)\longrightarrow \MF(M).
\end{equation*}
Moreover, by using Propositions \ref{prop_filling_arc_rib} and \ref{prop_ribbon_fol}, we have seen that the Teichmüller space is identified with the space of filling foliations and also with the space of filling multi-arcs
\begin{equation*}
    \Tc(M)= \MA_\R^0(M)=\MF^0(M).
\end{equation*}
In this picture, the top cells $\T^{comb,*}(M)$ of $\Tc(M)$ correspond to the subspace $\MF^*(M)$ of foliations with simple zeros and no saddle connection (and simple poles at the marked points). In a similar way, we have the inclusions
\begin{equation*}
    \Tc(\Mo)\longrightarrow \MA_\R(\Mo)\longrightarrow \MF(\Mo).
\end{equation*}
Proposition \ref{prop_filling_arc_rib} gives the identifications
\begin{equation*}
    \Tc(\Mo)= \MA_\R^0(\Mo)=\MF^0(\Mo).
\end{equation*}
In this case, $\T^{ comb,*}(\Mo)$ corresponds to the space $\MF^{*}(\Mo)$ of directed foliations with singularities of order $2$ and no saddle connection.

\paragraph{Moduli spaces:}
\label{paragraph_def_moduli_space}
Let $M\in \bordb$, the mapping class group $\Mod(M)$ acts on $\Tc(\M)$, the moduli space $\Mc(M)$ is the quotients under this action. The moduli space is an orbifold cell complexes; the set of cells is $\rib(M)=\Rib(M)/\Mod(M)$ which is the set of combinatorial ribbon graphs (see Definition \ref{def_combi_rib}). It's finite, and we have
\begin{equation*}
\Mc(M)=\bigsqcup_{R\in \rib(M)} \Met(R)/\Aut(R),
\end{equation*}
where $\Aut(R)$ is a finite group that acts freely by linear transformations. If $\Mo$ is a direction on $M$, we can also define $\Mc(\Mo)=\Tc(\Mo)/\Mod(M)$, we have
\begin{equation*}
\Mc(\Mo)=\bigsqcup_{\Ro\in \rib(\Mo)} \Met(R)/\Aut(R).
\end{equation*}
Then $\Mc(\Mo)$ is a sub-complex in $\Mc(M)$.
\paragraph{Level sets:}
\label{paragraph_level_set}

Let $M\in \bordb$, for $\beta \in \partial M$ the length\footnote{see paragraph \ref{paragraph_orient_boundaries}} $l_\beta$ of $\beta$ defines a function:
\begin{equation*}
    l_\beta : \Tc(M)\to \R.
\end{equation*}
We also denote $\Lbord=(l_\beta)_{\beta\in \partial M}$, it is invariant under the action of $\Mod(M)$, and it descends to a map on the moduli spaces $\Mc(M)$. We denote
\begin{equation*}
    \Mc(M,L)=L_\partial^{-1}(\{L\}),\quad \forall L \in \Rpp^{\partial M}
\end{equation*}
If $\Mo$ is a directed surface, by Proposition \ref{prop_dimorientability}, the map $\Lbord$ is defined on $\Tc(\Mo)$ and takes its values in $\Lambda_{\Mo}$:
\begin{equation*}
\Lbord: \Tc(\Mo) \to \Lambda_{\Mo}.
\end{equation*}
In this case it also induces a map on $\Mc(\Mo)$ and we denote 
\begin{equation*}
    \Mc(\Mo,L)=L_\partial^{-1}(\{L\}),\quad \forall L \in \Lambda_{\Mo}.
\end{equation*}
 The level sets are also cell complexes, and according to Proposition \ref{prop_dimorientability}, for each $L\in \Lambda_{\Mo}$ $\Mc(\Mo,L)$ is locally affine sub-manifolds of codimension $n-1$ in the moduli space $\Mc(\Mo)$. We have:
\begin{equation*}
\dim \Mc(\Mo,L) = 4g-3+n^++n^-+m,
\end{equation*}
if $\Mo$ is of type $(g,n^+,n^-,m)$.

\paragraph{Stratifications and decorations:}
\label{para_decorations_stratification}
As we see in remark \ref{rem_decoration_ribbon} in paragraph \ref{paragraph_def_combi_rib}, a ribbon graph $R$ defines a decoration $\nu_R$ in the sense of paragraph \ref{paragraph_surface_background}. Decorations are used to stratify the Teichmüller space $\Tc(M)$. Let $\Md=(M,\nu)$ be a decoration on $M$. We denote $\Rib^*(\Md)$ the ribbon graphs $R\in \Rib(M)$ with $\nu_R=\nu$ and $\Rib(\Md)$ the ones with $\nu_R\le \nu$ (the meaning of $\le$ is given in paragraph \ref{rem_decoration_collapsing_et_marked_point}).
\begin{equation*}
\Tc(\Md)=\bigsqcup_{R\in \Rib(\Md)}\Met(R),~~~\text{and}~~~\Tcs(\Md)=\bigsqcup_{R\in \Rib^*(\Md)}\Met(R).
\end{equation*}
We can see that if $R'\le R$ is a degeneration of $R$, then $\nu_{R'}\le \nu_R$ is a degeneration of $\nu_R$. Then $\Tc(\Md)$ is a closed sub-complex of $\Tc(M)$, and it's the closure of $\Tcs(\Md)$ in $\Tc(M)$. In general, Lemma \ref{lem_dim_TR} shows that:
\begin{equation*}
\dim(\Tc(\Md))=d(M)+n(\nu).
\end{equation*}
The decoration also induces a stratification of $\Tc(\Mo)$. A decorated, directed surface $\Mdo$ defines a stratum $\Tc(\Mdo)$. These stratifications are compatible with the action of $\Mod(M)$, and then moduli spaces $\Mc(M)$ and $\Mc(\Mo)$ are also stratified; we denote $\Mc(\Md)$ and $\Mc(\Mdo)$ the strata in the moduli spaces.
\begin{rem}
For a connected surface $\Md$ of type $(g,n,\nu)$, it's convenient to use the notations $\Tc_{g,n}(\nu)$ and $\Mc_{g,n}(\nu)$ instead of $\Tc(\Md)$ and $\Mc(\Md)$.
\end{rem}

\paragraph{Measures on the Teichmüller and moduli spaces of metric ribbon graphs:}
\label{paragraph_measurecombteich}

If $R$ is a ribbon graph, it is natural to endow $\Met(R)$ with the Lebesgue measure $\prod_e dm_e$ by using the identification $\Met(R)=\Rpp^{X_1R}$. The group $\Aut(R)$ acts by permutation on $\Met(R)$, then the measure is well defined on the quotient $\Met(R)/\Aut(R)$; we denote $d\mu_R$ the quotient measure (We remark that the action of $\Aut(R)$ preserves the measure but a priori not the orientation). Then each stratum $\Tc(\Md)$ (resp $\Tc(\Mdo)$) in the Teichmüller space carries a measure supported on $\Tcs(\Mdo)$ (resp $\Tcs(\Mdo)$), the set of cells of maximal dimension. This measure descends to a measure $d\mu_{\Md}$ on $\Mc(\Md)$ and $d\mu_{\Mdo}$ on $\Mc(\Mdo)$. In the case of a principal stratum, we simply denote $d\mu_{M}$ and $d\mu_{\Mo}$ the measures on $\Mc(M)$ and $\Mc(\Mo)$.\\

We can also define measures on the level sets of $\Lbord$. For each ribbon graph $R$ and each $L$, we can consider the Lebesgue measure on $\Met(R,L)$ (similarly to paragraph \ref{paragraph_measure_convex} ). The tangent space of $\Met(R,L)$ is the space $K_R$ defined in paragraph \ref{paragraph_orient_boundaries}; to normalize the Lebesgue measure on $K_R$, we use the lattice of integral points: $K_R(\Z)=K_R\cap \Z^{X_1R}$. The space $\Met(R,L)$ is contained in an affine subspace directed by $K_R$. A choice of origin $m\in \Met(R,L)$ allows us to identify it with an open polytope in $K_R$. Different choices of base points produce a change of coordinates given by a translation and then preserve the Lebesgue measure on $K_R$ normalized by $K_R(\Z)$. Then for each $L$ in $\Lbord(\Met(R))$ there is a measure $d\tilde{\mu}_R(L)$ on the space $\Met(R,L)$ normalized by $K_R(\Z)$. The lattice $K_R(\Z)$ is invariant by $\Aut(R)$ (because it is defined using $T\Lbord$), then the Lebesgue measure induces a measure $d\mu_R(L)$ on $\Met(R,L)/\Aut(R)$. Then, for each $\Mdo$ (or $\Md$), these measures define a measure $d\mu_{\Mdo}(L)$ on the level set $\Mc(\Mdo,L)$ supported by the top cells (resp. $d\mu_{\Mdo}(L)$ on $\Mc(\Md,L)$).
\paragraph{Volumes of the moduli space:}
\label{paragraph_volumes_moduli}
Each stratum $\Mc(\Mdo)$ (resp. $\Mc(\Md)$), possesses a natural measure supported on the top cells, but its volume is infinite. Nevertheless, it is possible to consider the measure $dV_{\Mdo}$ on $\Lambda_{\Mo}$ (resp. $dV_{\Md}$ on  $\Lambda_{M}$) defined as the push forward of $d\mu_{\Mdo}$ under the map $\Lbord$.
\begin{lem}
The measures $dV_{\Mdo}$ (resp. $dV_{\Md}$ ) are sigma finite.
\end{lem}
\begin{proof}
We have
\begin{equation*}
 dV_{\Mdo}= {\Lbord}_{*}~d\mu_{\Mdo}=\sum_{\Ro\in\rib^*(\Mdo)}dV_{\Ro}.
\end{equation*}
Where the sum is finite and for $\Ro$ we denote $dV_{\Ro}={\Lbord}_{*}d\mu_{\Ro}$. Each edge of $\Ro$ is contained in a boundary, then we have
\begin{equation}
m_e\le \|\Lbord(m)\|_\infty,~~~\forall e\in X_1R.
 \end{equation}
 Then we can see that the measures $dV_{\Mdo}$ are sigma-finite because the preimage of a bounded set by $\Lbord$ is also bounded (this is also true for $dV_{\Md}$).
\end{proof}
These measures are characterized by the relation
\begin{equation*}
\int_{\Lambda_{\Mo}} f(L)~dV_{\Mdo} = \int_{\Mc(\Mdo)} f( \Lbord(S))~d\mu_{\Mdo},
\end{equation*}
for $f$ a measurable and positive function on $\Lambda_{\Mo}$. As we see in the last paragraph, for each $L\in \Lambda_{\Mo}$, the level set $\Mc(\Mdo,L)$ is equipped with its Lebesgue measure $d\mu_{\Mdo}(L)$. In what follows, we consider the volume of $\M^{comb,*}(\Mdo,L)$ of the subset of generic directed ribbon graphs. It makes sense to compute the volume of $\Met(\Ro,L)$ by using the last inequality, $\Met(\Ro,L)$ is a relatively compact convex polytope, and then 
\begin{equation*}
V_{\Ro}(L)  =\int_{\Met(\Ro,L)/\Mod(M)}d\mu_{\Ro}(L)=\frac{1}{\#\Aut(\Ro)} \int_{\Met(\Ro,L)}d\mu_{\Ro}
\end{equation*}
is finite for all $\Ro$ and $L$ (the last equality is true because a $\Aut(\Ro)$ acts freely on $\Met(\Ro,L)$). Then the total volume of $\M^{comb,*}(\Mdo,L)$ is equal to the finite sum 
\begin{equation*}
 V_{\Mdo}(L) =\int_{\M^{comb,*}(\Mdo,L)}d\mu_{\Mdo}(L)=\sum_{\Ro\in \rib^*(\Mdo)} V_{\Ro}(L),
\end{equation*}
then, it's also finite. In the case of a connected surface, we can also use the notation $V_{g,n^+,n^-}^\nu(L^+|L^-)$. The three objects $dV_{\Mdo}$, $V_{\Mdo}$, and $d\sigma_{\Mo}$\footnote{the natural Lebesgue measure on $\Lambda_{\Mo}$} are related by the following proposition.

\begin{prop}
\label{prop_decomp_volumes_L}
The measure $dV_{\Mdo}$ is absolutely continuous with respect to $d\sigma_{\Mo}$. On $\Lambda_{\Mo}$, we have the relation
\begin{equation*}
\frac{dV_{\Mdo}}{d\sigma_{\Mo}}=V_{\Mdo}(L),~~~a.s.
\end{equation*}
Then for a measurable function
\begin{equation*}
\int_{\Mc(\Mdo)} f(L_\partial(S))d\mu_{\Mdo}= \int_{\Lambda_{\Mo}} f(L) V_{\Mdo}(L)d\sigma_{\Mo}.
\end{equation*}
\end{prop}
Proposition \ref{prop_decomp_volumes_L} is a consequence of the following lemma and the results of paragraph \ref{paragraph_measure_convex}.
\begin{lem}
\label{lem_exact_sequence_TL}
For all directed ribbon graphs $\Ro$ embedded in $\Mo$, the map $T\Lbord$ fits into the following exact sequence:
\begin{equation*}
\{0\}\to K_R(\Z)\to T_R(\Z) \overset{T\Lbord}{\to} T_{\Mo}(\Z)\to \{0\}.
\end{equation*}
\end{lem}
When the graph is non-dirigible, we have the following lemma; in this case, there is an extra factor $\frac{1}{2}$.
\begin{prop}
If $R$ is non-dirigible, embedded in $M$ and $d\sigma_M$ is the Lebesgue measure on $\Lambda_M$. We have:
\begin{equation*}
\frac{d V_R}{d\sigma_M}=\frac{V_R(L)}{2}.
\end{equation*}
\end{prop}
\begin{proof}
This is due to the fact that $T\Lbord(T_R(\Z))$ is equal to the subgroup of $\Z^{\partial M}$ of vectors $(l_\beta)$ with $\sum_\beta l_\beta \in 2\Z$. But it is not straightforward to prove that the image is exactly this lattice.
\end{proof}

\newpage

\section{Curves on ribbon graphs and surgeries:}
\subsection{Combinatorial curves and surgeries}
\label{subsection_surgeries}
\paragraph{Combinatorial representation of an homotopy class of curves:}
\label{para_combi_curve}

Let $M\in \bord$, and $\mathcal{C}(M)$ be the set of homotopy classes of curves in $M$. Let $R$ be a ribbon graph embedded in $M$, a combinatorial curve is a closed walk modulo the action of $\Z\rtimes \{\pm 1\}$ by shifting the sequence of half edges and reversing the order. Nevertheless, there is still several representations of an isotopy class of curves  $\gamma \in \mathcal{C}(M)$ as a combinatorial curve on $R$. To get around this problem, we say that $\gamma_1\ge \gamma_2$ if we can obtain $\gamma_2$ by removing subsequences of the form $(e,s_1e)$ in $\gamma_1$. We write $\gamma_1\sim\gamma_2$ iff there is $\gamma$ such that $\gamma_i\ge \gamma ,i=1,2$. We can see that this relation defines an equivalence relation; it corresponds to the combinatorial notion of homotopy, and we denote $\mathcal{C}^{comb}(R)$ the equivalence classes of combinatorial curves. We say that an element is minimal if it is minimal for the partial order relation in its equivalence class.

\begin{lem}
\label{lem_curve_comb}
    Let $M\in \bord$ and $R\in \Rib(M)$:
    \begin{itemize}
        \item Each class in $\mathcal{C}^{comb}(R)$ admits a unique minimal representation.
        \item Each homotopy class can be represented as a combinatorial curve, we have $\mathcal{C}^{comb}(R)\simeq \mathcal{C}(M)$.
    \end{itemize}
\end{lem}

\begin{rem}
\label{rem_curve_marked_point}
If $M\in \bordb$ and $R\in \Rib(M)$, marked points in $M$ correspond to marked vertices in $R$. In this case, in the definition of $\sim$ we allow a subsequence of the form $(e_i,s_1e_i)$ iff the target of $e_i$ is a marked vertex. Then, Lemma \ref{lem_curve_comb} remains true.
\end{rem}

\begin{figure}
    \centering
    \includegraphics[width=0.5\linewidth]{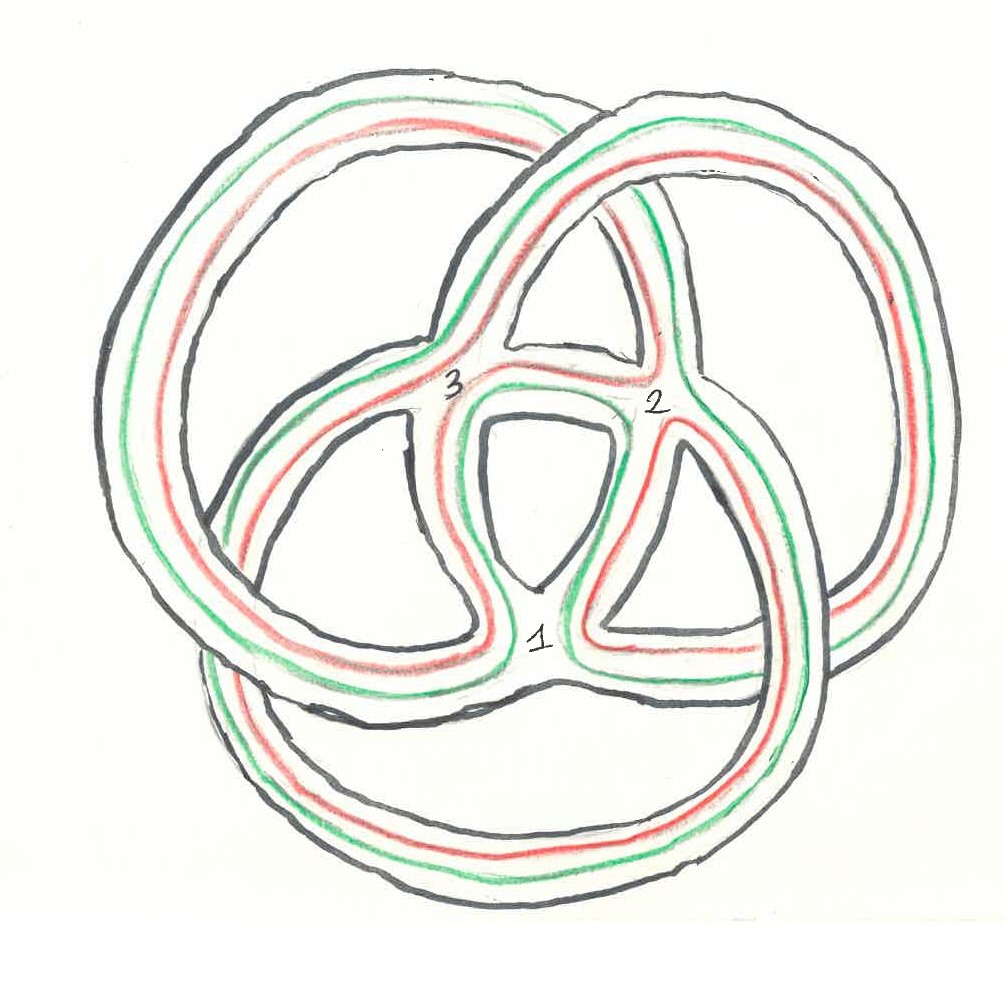}
    \caption{Curve on a ribbon graph.}
    \label{figure_curve_ribbon}
\end{figure}

\paragraph{Length of a curve on a metric ribbon graph:}
\label{lenght_curve_rib}
Let $S=(R,m)$ be a metric ribbon graph, the length $l_\gamma(S)$ of a combinatorial curve $\gamma$ is defined naively by summing the metric of each edge:
\begin{equation*}
l_\gamma(S)=\sum_{i=1}^{r}m_{[e_i]}(S).
\end{equation*}
By using Lemma \ref{lem_curve_comb}, we can define the combinatorial length of a curve $\gamma\in \mathcal{C}(M)$ as the length of its unique minimal combinatorial representation. It is the smallest length among all possible representations of $\gamma$. There is also a topological way to compute the length of a curve. As we see in Lemma \ref{lem_ribbon_multiarc_1}, each edge $e\in X_1R$ is associated with an arc $e^*$ that relies on the two boundaries $[e]_2,[s_1e]_2$. Let $ y_e(\gamma)$ be the intersection pairing of a curve $\gamma\in \Sit(M)$ with the arc $e^*$
\begin{equation}
\label{y_coord}
y_e(\gamma)=\iota(\gamma,e^*).
\end{equation}
The length of $\gamma$ is then given by the intersection pairing with $A(S)$ (see paragraph \ref{paragraph_ribbon_filling_fol}):
\begin{equation}
\label{formula_lenght_curve_2}
l_\gamma(S)=\iota(A(S),\gamma)=\sum_{e\in X_1R} m_e(S)y_e(\gamma).
\end{equation}
The numbers $(y_e(\lambda))_{e\in X_1R}$ are well defined also for a foliation $\lambda\in \MFt_0(M)$; and Formula \ref{formula_lenght_curve_2} makes sense for foliations in $\MFt_0(M)$. If $\lambda\in \MF_0(M)$, the length of $\lambda$ defines a continuous function on $l_\lambda: \Tc(M)\to \Rp$ which is linear on each cell.
\paragraph{Surgery along a curve:}
\label{paragraph_surgery}
Let $R$ be a ribbon graph embedded in $M$, and $\Gamma\in\MS(M)$ a multi-curve. We recall that $M_\Gamma$ is the surgery of $M$ along $\Gamma$. In this part, if $R\in \Rib(M)$, we show that it is always possible to cut $R$ along $\Gamma$ and obtain a ribbon graph $R_\Gamma$ on $M_\Gamma$. We define the ribbon graph $R_\Gamma$ using the multi-arc $A(R)$. We proceed in several steps. 
\begin{lem}
\label{lem_cut_arc}
If $\Gamma$ is a multi-curve, there is a map:
\begin{equation*}
\cut_\Gamma: \MA_\R(M)\longrightarrow \MA_\R(M_\Gamma),
\end{equation*}
which is linear on each cell. Moreover, $\cut_\Gamma$ is the unique map such that:
\begin{equation*}
\iota(\cut_\Gamma(A),\gamma) = \iota(A,\gamma),~~~\forall \gamma \in \Sit(M_\Gamma).
\end{equation*}
\end{lem}

\begin{proof}
If $A$ is a multi-arc and $\Gamma$ is a multi-curve, then, up to homotopy, we can assume that $A,\Gamma$ are in minimal position. This means that the intersections are transverse, and the multi-arc and the multi-curve minimize the number of intersection points up to homotopy. The fact that they intersect transversely makes it possible to cut the surface and the arcs along $\Gamma$. The result is a family $\tilde{A}_\Gamma$ of arcs on each connected component of $M_\Gamma$. If $A$ and $\Gamma$ are in minimal position, then by using the Bigon criterion (remark \ref{rem_bigon_criterion_2}), we can see that the arcs of $\tilde{A}_\Gamma$ are non trivial. There are possibly families of homotopic arcs in $\tilde{A}_\Gamma$; we identify the homotopic arcs and obtain a smaller family of arcs $A_\Gamma$ on $M_\Gamma$, which is now a multi-arc. If $m\in \Met(A)$ is a weight on $A$, it induces a weight on each arc of $\tilde{A}_\Gamma$. We sum the weights of the arcs that are identified and then define weights on the arcs of $A_\Gamma$. This construction defines a map: 
\begin{equation*}
   \cut_\Gamma:\Met(A)\longrightarrow \Met(A_\Gamma),
\end{equation*}
which is linear. Then we can see that this map preserves the intersection pairing; if $\gamma\in \Si(M_\Gamma)$, there is $c$ such that $\gamma\in \Si(M_\Gamma(c))$. Let $A_\Gamma(c)$ be the arcs of $\cut_\Gamma(A)$ that are in $M_\Gamma$. Up to homotopy, we can assume that $A$ and $\Gamma\sqcup\{\gamma\}$ are in minimal position. Then by remark \ref{rem_bigon_criterion_2}, we can see that $A_\Gamma(c)$ and $\gamma$ are also in minimal position. Then we have
\begin{equation*}
\iota(A,\gamma)=\iota(A_\Gamma(c),\gamma)=\iota(\cut_\Gamma(A),\gamma).
\end{equation*}
When $\gamma\in\Gamma\sqcup \partial M$, the situation is similar. The uniqueness of the construction follows from Proposition \ref{prop_injectivite_iota_arc}.
\end{proof}

Using Lemma \ref{lem_cut_arc} and the relation between filling multi-arcs and ribbon graphs, we can immediately deduce the Corollary \ref{cor_cut_rib}:

\begin{cor}
\label{cor_cut_rib}
If $M\in \bordb$ and $A\in \MA^0(M)$ is a filling multi-arc, then for all $\Gamma\in \MS(M)$, the multi-arc $A_\Gamma$ is also filling on $M_\Gamma$. Then, for all ribbon graph $R$, there is a ribbon graph $R_\Gamma$ obtained after cutting $R$ along $\Gamma$, and this induces a linear map:
\begin{equation*}
\cut_\Gamma: \Met(R)\longrightarrow \Met(R_\Gamma).
\end{equation*}
It defines a continuous map $\cut_\Gamma: \Tc(M)\longrightarrow \Tc(M_\Gamma)$; moreover, this is the unique map that satisfies:
\begin{equation*}
l_\gamma(\cut_\Gamma(S))= l_\gamma(S),~~~\forall \gamma \in \Sit(M_\Gamma).
\end{equation*}
\end{cor}
\begin{figure}[h!]
    \centering
    \includegraphics[width=0.4\linewidth]{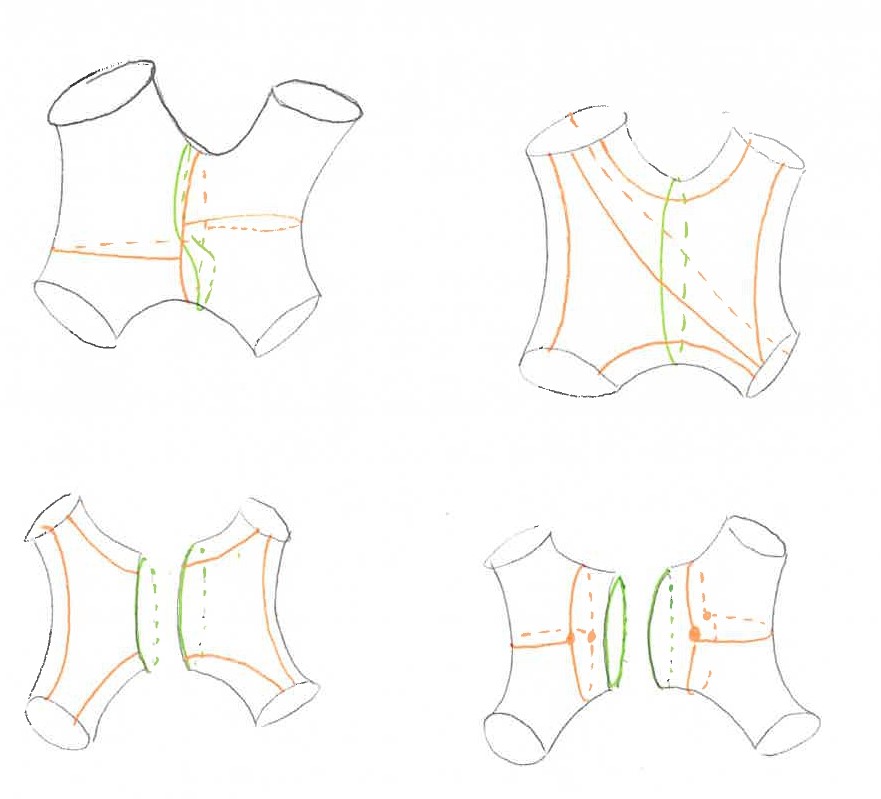}
    \caption{Cutting a ribbon graph using the dual multi-arc.}
    \label{figure_decoupage_ribbon}
\end{figure}
\paragraph{Multi-curves on a directed ribbon graph:}
\label{paragraph_oriented_multicurve}

In the case of directed ribbon graphs, it is natural to consider directed multi-curves. We say that an equivalence class of combinatorial curves is dirigible if there is a representation $(e_i)_i$ along with $\epsilon(e_i)=1, \forall i$. We can see that such representation is minimal; moreover, according to figure \ref{figure_surgeries_oriented} the direction of each curve induces a direction $\Gao$ on $\Gamma$ (see remark \ref{rem_orient_curves}). Proposition \ref{prop_curve_dir_ribbon} ensures that surgeries along dirigible curves preserve the dirigibility.  

\begin{prop}
\label{prop_curve_dir_ribbon}
\begin{itemize}
    \item Let $\Ro$ be a directed ribbon graph, if $\Gamma$ is a dirigible multi-curve, then the direction of the curves induces a non-degenerate direction $\Gao$ on $\Gamma$. Moreover, the ribbon graph $R_{\Gamma}$ possesses a natural direction $\epsilon_{\Gamma,\Ro}$, which induces the same direction as $\Gao$ on $\Gamma$.
    \item If $R$ is a metric ribbon graph such that $R_\Gamma$ is directed and this direction induces a direction on $\Gamma$, Then the graph $R$ is also directed, and this direction is compatible with the direction on $\Gamma$.
\end{itemize}
\end{prop}

\begin{rem}
    For a more general directed multi-arc $A^\circ$, a curve is dirigible if it admits a representation that crosses arcs in $A^\circ$ in a direct way. For each directed non-degenerate multi-curve $\Gao\in \MS(\Mo)$, there is a subset of weighted multi-arcs $\MA_{\Gao}(\Mo)\subset \MA(M)$ such that $\Gamma$ is dirigible on each element of $\MA_{\Gao}(\Mo)$ and the direction induced on $\Gamma$ is $\Gao$. As a corollary of the last results, we obtain the following:.

\begin{cor}
\label{lem_cut_arc_orientable}
Let $\Mo$ be a directed surface; for a directed, non-degenerate multi-curve $\Gao$, the restriction of $\cut_\Gamma$ induces a map:
\begin{equation*}
\cut_{\Gamma}~:~ \MA_{\Gao}(\Mo)\longrightarrow \MA(\Mo_{\Gao}).
\end{equation*}
\end{cor}
\end{rem}

\begin{figure}[h!]
    \centering
    \includegraphics[width=0.5\linewidth]{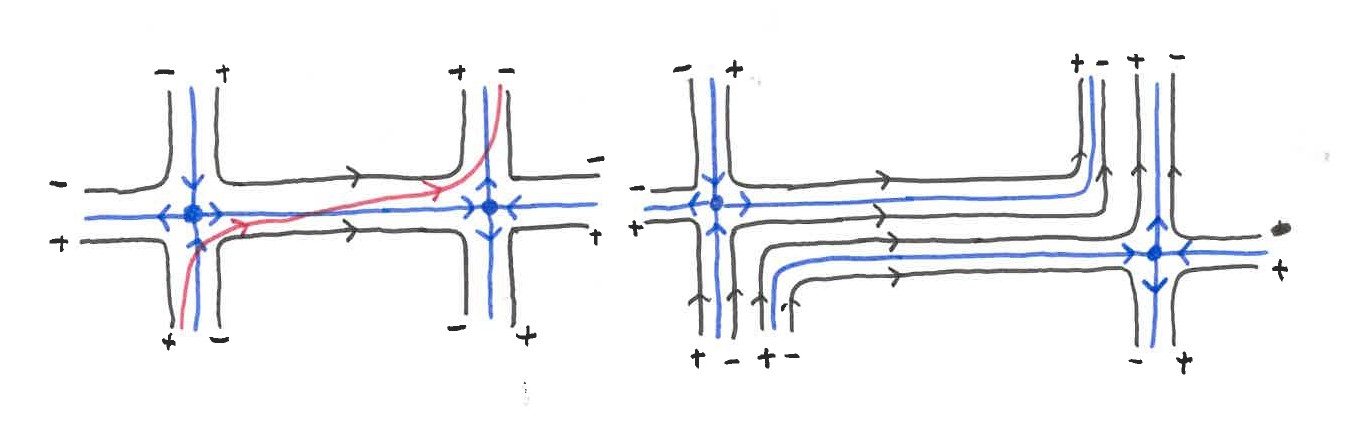}
    \caption{Surgeries on directed ribbon graphs.}
    \label{figure_surgeries_oriented}
\end{figure}

\subsection{Admissible curves}
\label{subsection_admissible}
\paragraph{Definition:}
\label{paragraph_admissible_curves}
In this part, we introduce admissible curves and foliations. We give three possible equivalent definitions of admissible curves which have their own advantages. The first use the combinatorial representation, the second the relation with surgeries and the third is related to Jenkin-Strebel differentials. Let $R$ be a ribbon graph. We introduce in paragraph \ref{paragraph_def_oriented_RG} the two "zig-zag" permutations (Formula \ref{formula_s+-} ).
\begin{Def}
\label{def_admissible_fol}
    Let $R$ a ribbon graph, an essential curve $\gamma \in \Si(M_R)$ is admissible iff it satisfies one of the following equivalent cconditions 
    \begin{enumerate}
        \item It admits a combinatorial representation $(e_i)_i$ of the form
        \begin{equation*}
        e_{i+1}= s_{\pm} e_i.
        \end{equation*}
        \item It does not split a vertex of the graph $\nu_{R_\gamma}=\nu_R$.
        \item The quadratic differential $q_\gamma(S)$ has a zero of order $d-2$ for each vertex of degree $d$, $\nu_{q_\gamma(S)}=\nu_{S}$.
    \end{enumerate}
\end{Def}

\begin{figure}
    \centering
    \includegraphics[width=0.5\linewidth]{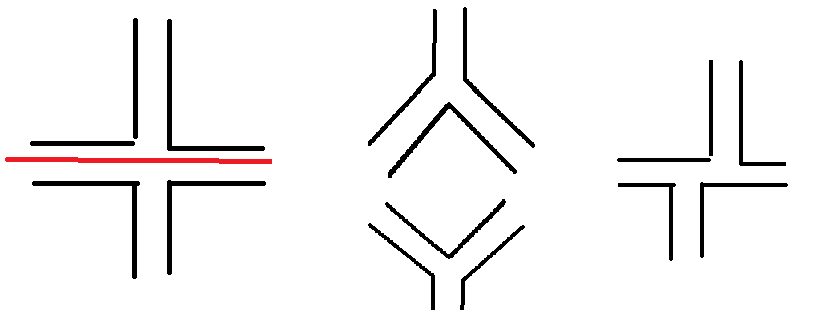}
    \caption{A non admissible curve.}
    \label{figure_non_admissible}
\end{figure}
The point $2)$ also make sense for multicurves and the point $3)$ foliations in $\MF(M)$, and this does not depend on the metric on $R$. We denote $\Si(R),\MS(R),\MS_\Z(R),\MF_0(R),...$ the various spaces of admissible curves, multi-curves and foliations. 

\begin{proof}
We start by explaining why $1) \Leftrightarrow 2)$. Let $\gamma\in \Si(M)$ already see that $\gamma_R^{comb}$ defines a curve in minimal position with $A=A(R)$. Then we can see that surgeries along the curve won't change the decoration $\nu_{A}=\nu_{A_\Gamma}$ iff the curve is admissible.\\
For $2) \Leftrightarrow 3)$, let $\gamma \in \Si(M)$, for each $S\in \Met(R)$ we prove that $\nu_{q_\gamma(S)}=\nu_{S_\Gamma}$. Using Corollary \ref{cor_cut_rib}, we can cut $S$ along $\gamma$; the result is a family of metric ribbon graphs $S_\gamma=(S_\gamma(c))_c$ on $M_\gamma$. For each connected component $c$ of $M_\gamma$, there is a Jenkin-Strebel differential $q_c=q_{\lambda_0}(S_\gamma(c))$ on $M_\gamma(c)$ with $\nu_{q_c}=\nu_{S_\gamma(c)}$. Every non-singular horizontal trajectory of $q_c$ is periodic and surrounds a pole. Let  $(\gamma^1,\gamma^2)$ be the two boundaries of $M_\gamma$ that correspond to $\gamma$. It is possible to glue a horizontal cylinder of height $1$ to the two boundaries $\gamma^1,\gamma^2$. The result is a Jenkin-Strebel differential $q$ on $M$ such that
\begin{equation*}
\text{Re} (q)=\lambda(S)~~~\text{and}~~~\text{Im} (q) =\lambda(\Gamma).
\end{equation*}
By uniqueness in Theorem \ref{thm_hubbard_masur_poles} $q=q_\Gamma(S)$, and by construction, we have:
\begin{equation*}
\nu_{S_\Gamma}=\nu_q=\nu_{q_\Gamma(S)}.
\end{equation*}
Then the points $2)$ and $3)$ of the definition coincide.
\end{proof}

\begin{rem}[Particular cases]
\label{rem_part_cases_admissible_curve}
\begin{enumerate}
    \item We have $\Si(R)=\Si(M_R)$ iff the ribbon graph $R$ is generic\footnote{it has only trivalent vertices and univalent vertices at the marked points}.
    \item If $\Ro$ is directed, all the admissible curves are dirigible. And the converse is true iff $\Ro$ is generic\footnote{it has only quadrivalent vertices and bivalent vertices at the marked points}
\end{enumerate}
\end{rem}

\paragraph{Coordinates for $\MF(R)$ and $\MF_0(R)$:}
\label{paragraph_coord_x}

Let $M\in \bord$ and $R$ be an embedded ribbon graph in $M$. We define:
\begin{equation*}
\Qq(R)=\{q\in \QT(M)~|~ \text{Re}(q)\in \Met(R)~\text{and}~\nu_q=\nu_R\}~~~\text{and}~~~\Qq_0(R)=\Qq(R)\cap \QT_0(M).
\end{equation*}
From Definition \ref{def_admissible_fol}, they correspond to subsets of admissible foliations on metric ribbon graphs in $\Met(R)$. Proposition \ref{prop_coord_x} gives period coordinates for admissible foliations.
\begin{prop}
\label{prop_coord_x}
For all $R$ ribbon graph, we have:
\begin{equation*}
\Qq(R)=\Met(R)\times \MF(R)~~~\text{and}~~~ \Qq_0(R)=\Met(R)\times \MF_0(R).
\end{equation*}
There is bijections that preserve integral points:
\begin{equation*}
\Qq(R)\longrightarrow \Met(R)\times T_R~~~\text{and}~~~\Qq_0(R)\longrightarrow \Met(R)\times K_R.
\end{equation*}
In particular, they induce bijections:
\begin{equation*}
\MF(R)\simeq T_R,~~~\text{and}~~~\MF_0(R)\simeq K_R.
\end{equation*}
\end{prop}

\begin{rem}
 We recall that $T_R$ is the tangent space of $\Met(R)$ and $K_R$ is the subspace of tangent vectors that preserve the boundary lengths (see paragraph \ref{paragraph_orient_boundaries}). Using paragraph \ref{paragraph_coho_ribbongraph} and remark \ref{rem_KR_H(R,X0R)}, we can identify these spaces with the cohomologies $H^{1}(R,X_0R,X_2R)$ and $H^{1}(R,X_0R)$.
\end{rem}

\begin{proof}
We first construct the map, which is the period map of the horizontal foliation along the edges of the embedded graph $R$
\begin{equation*}
\Qq(R)\longrightarrow \Met(R)\times T_R.
\end{equation*}
We use the zippered rectangle construction (see Proposition \ref{prop_zippered_fol_general}). For all $q$ and all $e\in XR$, there is a maximal embedded infinite rectangle $\mathbf{R}^{\bullet}_e\to M^\bullet$ such that $\text{Re} (q)$ is locally given by $|dx|$ on $\mathbf{R}^{\bullet}_e$. Moreover, we assume that the direction of $[0,1]$ corresponds with the direction of $e$; $q$ has no singularities on the interior of $\mathbf{R}^{\bullet}_e$, but the maximality of $\mathbf{R}^{\bullet}_e$ implies that there is at least one singularity on each boundary component. As $\nu_q=\nu_R$, then $q$ has no vertical saddle connections, then there is only one singularity on each boundary of $\mathbf{R}^{\bullet}_e$. It is possible to choose a square root's $\alpha_e$ of $q$ on $\mathbf{R}^{\bullet}_e$ such that $\text{Re}( \alpha_e )=dx$. After this choice, we denote $x_e^-$ the singularity on the left boundary and $x_e^+$ the one on the right. Let $I_e\subset \mathbf{R}^{\bullet}_e$ be the horizontal maximal open interval directed according to $e$ such that the left extremity is $x_e^-$. There is an isometry $I_e=[0,m_e(S_q)]$, and we can consider for all $x\in I_e$ the vertical flow $v_t$ such that $\im (\alpha) (\partial v_t)=1$. It defines a map:
\begin{eqnarray*}
\phi_e : I_e\times \R &\longrightarrow& \mathbf{R}^{\bullet}_e\\
(x,y)&\longrightarrow& v_y(x).
\end{eqnarray*}
If $(x,y)$ are complex coordinates, then the pullback of $\alpha$ under this map is equal to $dz$. In the local coordinates given by $\phi_e$, we can define
\begin{equation*}
 x_e^+=m_e(\alpha) + ix_e(\alpha).
\end{equation*}
This is the relative period of $\sqrt{q}$ along the edge $e$. There are no sign ambiguities because we assume that the real part of the period is positive, and then $x_{s_1e}=x_{e}$. Then this defines an element of the tangent space $T_R$.\\

By the zippered rectangle construction, the data $(m,x)$ are enough to recover $q$. We simply glue the rectangles $\mathbf{R}_e^{\bullet}=[0,m_e]\times \R$ by performing a shear of parameter $x_e$ on the right boundary. There is no constraint on $(m,x)$ to perform the construction. We obtain in this way a Riemann surface with an Abelian differential given by $dz$ on each rectangle. The surface is $\tilde{M}_R^{\bullet}$ the directed cover of $R$. Then the square of the Abelian differential defines a quadratic differential $q_S(x)$ on $M^{\bullet}$, which is locally given by $(dz)^2$ on each rectangle. The two constructions are the inverse of each other, and then there is a bijection:
\begin{equation*}
 \Qq(R)\longrightarrow \Met(R)\times T_R.
\end{equation*}
The imaginary part of the quadratic differential $q_S(x)$ defines a foliation $\lambda_R(x)$, which does not depend on $m$, so the space $\MF(S)$ depends only on $R$.
Moreover
\begin{equation*}
l_\beta(\lambda) = \sum_{e\in X_1R} y_e(\beta)x_e(\lambda)= dl_\beta(\sum_ex_e(\lambda)\partial_e).
\end{equation*}
Then the elements of $\MF_0(S)$ correspond exactly to the vectors in $K_R$.
\end{proof}

\begin{figure}
\centering
\includegraphics[width=8cm]{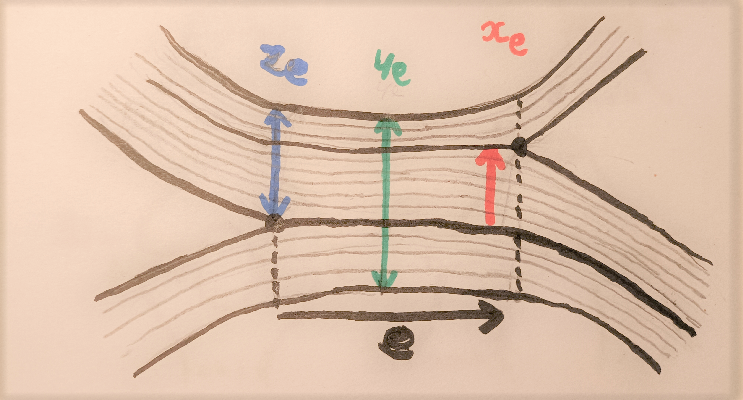}
\caption{coordinates $x,y,z$}
\label{coordfig}
\end{figure}

\paragraph{Case of directed ribbon graphs:}
If $\Ro$ is a directed ribbon graph on $\Mo$, we consider in a similar way the spaces of Abelian differentials $\Hc(\Ro),\Hc_0(\Ro)$. As a corollary of Proposition \ref{prop_coord_x}, we obtain the following result:
\begin{cor}
\label{prop_x_coord_dir}
All admissible foliations on $\Ro$ are dirigible,
\begin{equation*}
\MF(R)\subset\MF(\Mo).
\end{equation*}
The quadratic differential $q_S(x)$ is the square of an Abelian differential $\alpha_S(x)$, and we have bijections:
\begin{equation*}
\Hc(\Ro)\simeq \Met(R)\times T_R,~~~\text{and}~~~\Hc_0(\Ro)\simeq \Met(R)\times K_R.
\end{equation*}
\end{cor}

\begin{rem}[Relation with cohomology]
If $\Ro$ is embedded in $M$ we see that we have isomorphisms
\begin{equation*}
K_R=H^1(M^{cap},X_0R,\R)~~~~T_R=H^1(M^{\bullet},X_0R,\R).
\end{equation*}
Then the map corresponds to the period coordinates.
\end{rem}

\paragraph{Coordinates for integral multi-curves:}

An important consequence of Proposition \ref{prop_coord_x} is the following proposition that characterizes the set of admissible integral multi-curves:

\begin{prop}
\label{prop_x_admissible_curves}
The set of admissible integral multi-curves $\MF_\Z(R)$ is identified with the set of non-zero integral points $K_R(\Z)\backslash\{0\}$ under the period coordinates $x$.
\end{prop}

\begin{proof}
This is mainly straightforward by using the construction. For an element in $K_R(\Z)\backslash\{0\}$, we can see that the horizontal foliation is necessarily periodic with cylinders of integral height. Conversely, if we take such foliation, we can see that the periods are integers.
\end{proof}


\paragraph{Irreducible ribbon graphs and Fenchel-Nielsen decompositions:}
\label{irreducible_graph}
Irreducible ribbon graphs generalize pairs of pants outside the generic case (vertices of order one or three). They form an interesting class of ribbon graphs, but we won't use them so much here.

\begin{Def}
\label{Def_irreducible}
Let $R$ a connected ribbon graph, we say that it is irreducible iff it satisfies one of the two equivalent conditions:
\begin{enumerate}
    \item There is no non trivial admissible curves $\MS_\Z(R)=\emptyset$.
    \item $H^1(R,X_0R)=\{0\}$.
\end{enumerate}
\end{Def}

We call these graphs irreducible because we cannot reduce their topology by admissible surgeries. In some sense, they are minimal objects in the family of decorated surfaces. Moreover, as we have $K_R=H^1(R,X_0R)=\{0\}$, the strata of irreducible ribbon graphs are isolated points in the moduli space $\Mc_{G,n}(L)$.
\begin{proof}
The proof uses Proposition \ref{prop_coord_x}. We have $\MF_\Z(R)=K_R(\Z)\backslash\{0\}$ and then $\MF(R)$ is empty iff $K_R(\Z)$ is zero and then iff $K_R=\{0\}$.
\end{proof}
By computing the dimension of $H^1(R,X_0R)$ we can derive the following equivalence:

\begin{prop}
A ribbon graph is irreducible iff it is of genus zero and it satisfies one of the two following conditions:
\begin{itemize}
    \item It has only two vertices, and they are of odd degree.
    \item It is dirigible and has only one vertex.
\end{itemize}
\end{prop}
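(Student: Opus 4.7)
The plan is to reduce the statement to a dimension count via the criterion $K_R = \{0\}$ stated just above the proposition, combined with Proposition \ref{prop_dimorientability} and an Euler characteristic computation, and then to match the resulting cases to the combinatorial description.

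First I would invoke that $R$ (after capping boundaries) induces a CW decomposition of $M^{top}$ with $\#X_0 R$ vertices, $\#X_1 R$ edges, and $n$ faces, so the Euler relation gives $\#X_1 R = \#X_0 R + n + 2g - 2$. Plugging this into the formulas of Proposition \ref{prop_dimorientability} produces
\[
\dim K_R = \#X_0 R + 2g - 1 \ \text{(orientable case)}, \qquad \dim K_R = \#X_0 R + 2g - 2 \ \text{(non-orientable case)}.
\]
Since $\#X_0 R \ge 1$ and $g \ge 0$, the equation $\dim K_R = 0$ admits exactly two solutions: orientable with $g = 0$ and $\#X_0 R = 1$, or non-orientable with $g = 0$ and $\#X_0 R = 2$. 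This already yields necessity and sufficiency in the orientable regime, since an orientable graph with a single vertex is precisely the second bullet of the statement.

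Next I would match the non-orientable case to the ``two vertices of odd degree'' description. Paragraph \ref{orientation_coverRG} establishes that orientability forces every vertex to have even degree; on a sphere the converse also holds, because an orientation $\epsilon$ is exactly a 2-coloring of the face graph of $R$ with adjacent faces carrying opposite signs, and by planar duality such a coloring exists iff every face of the dual, i.e.\ every vertex of $R$, has even degree. Thus in the non-orientable case $g=0$, $\#X_0 R = 2$ at least one vertex must be of odd degree, and the handshake identity $\sum_v \deg(v) = 2\, \#X_1 R$ forces the number of odd vertices to be even, so with only two vertices both must be odd. Conversely, the presence of a single odd-degree vertex already prevents orientability by Paragraph \ref{orientation_coverRG}, hence a genus-zero graph with two odd-degree vertices automatically falls into the non-orientable regime with $\dim K_R = 0$.

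The only non-routine ingredient is the planar duality converse ``all vertices even $\Rightarrow$ orientable'' used on the sphere; in higher genus this implication would fail, but the dimension count has already restricted us to $g=0$, so the appeal is harmless. Apart from this observation, the proof is a direct substitution into the dimension formula of Proposition \ref{prop_dimorientability}, so no serious obstacle is expected.
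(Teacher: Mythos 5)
Your proposal is correct and follows exactly the route the paper intends: the paper's entire ``proof'' is the one-line remark that the classification ``is given by computing the dimension of $K_R$'', and your substitution of the Euler relation into Proposition \ref{prop_dimorientability} to get $\dim K_R=\#X_0R+2g-1$ (orientable) and $\#X_0R+2g-2$ (non-orientable) is precisely that computation carried out. The only detail you add beyond the paper's sketch is the spherical-duality fact that all-even degrees imply orientability in genus zero, which is the right (and needed) ingredient to convert the non-orientable two-vertex case into the ``two odd vertices'' phrasing.
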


To conclude this paragraph, irreducible graphs can be used to obtain Fenchel-Nielsen decompositions of a non-generic graph. But outside the generic case, there is no hope that these coordinates are global coordinates on a given stratum of metric ribbon graphs.

\begin{cor}
\label{Fenchel_Nielsen}
For each ribbon graph $R$, there exists an admissible multi-curve $\Gamma$ such that : all the components of $R_\Gamma$ are irreducible. Such a multi-curve can be called a maximal admissible multi-curve.
\end{cor}

\begin{proof}
    We can proceed by induction, either the graph is irreducible and $K_R=\{0\}$ or $K_R\neq\{0\}$; in the secound case, using $\MS_\Z(R)\simeq K_R(\Z)\backslash \{0\}\neq \emptyset$ we can find an admissible multi-curve $\Gamma$ on $R$, we can also check that $\text{dim} K_{R_\Gamma}< \text{dim} K_{R}$ then by induction on $\text{dim} K_{R}$ we obtain the corollary.
\end{proof}

\paragraph{Dual coordinates on $\widetilde{\MF}_0(R)$ and $\MF_0(R)$:}
\label{zippered_dual}
For each $R$, we define in paragraph \ref{paragraph_zeros_residu} the space $\MFt_0(M)$. An element of this space can be represented by a foliation in $\MF_0(M)$ marked by a choice of a periodic trajectory around each pole. Let $R$ be an embedded ribbon graph. It is possible to define the subspace $\MFt_0(R)$ of foliations in $\MFt_0(M)$ that are admissible on $R$. There is a map $\MFt_0(R)\to \MF_0(R)$,  quantities $(y_e)_e$ of Equation \ref{y_coord} are well defined for foliations in $\widetilde{\MF}_0(R)$. But in general, the map $y: \widetilde{\MF}_0(R)\to \Rp^{X_1R}$ is neither injective nor surjective. We define other parameters in the following way: For each $e\in XR$, let $\gamma_e$ be the undirected arc that joins $[e]_2$ and $[e]_0$, and we denote
\begin{equation*}
|z_e| = \iota(\lambda,\gamma_e)\in \Rpp.
\end{equation*}
Which is the distance between the singularity $[e]_0$ and the boundary curve $[e]_2$. They satisfy the relation (see figure \ref{coordfig})
\begin{equation*}
 x_e(\lambda)=|z_{e}(\lambda)|-|z_{s_2e}(\lambda)|,~~~\text{and}~~~y_e(\lambda)=|z_{s_0^{-1}e}(\lambda)|+|z_{e}(\lambda)|.
\end{equation*}
And the vector $(|z_e|)$ also satisfies the constraints:
\begin{equation}
\label{constraints_z}
|z_{s_2s_1e}(\lambda)|+|z_{e}(\lambda)|=|z_{s_2e}(\lambda)|+|z_{s_1e}(\lambda)|,~~~\forall e\in XR.
\end{equation}
Let $W_R^+$ be the set of $(|z_e|)\in \Rpp^{XR}$ that satisfy the last relations, and $W_R=TW_R^+$ the tangent space.\\

When $\Ro$ is directed according to Proposition \ref{prop_x_coord_dir}, all the admissible foliations are also directed. It is possible to consider the cycles $[\gamma_e]$ directed from $[e]_0$ to $[e]_2$; they satisfy the relations
\begin{equation*}
[\gamma_e]-[\gamma_{s_0^{-1}e}]= e^*~~~\text{and}~~~[\gamma_e]-[\gamma_{s_2e}] = e. \end{equation*}
The family $([\gamma_e])_{e\in XR}$ generates the relative homology, but it is not free; the cycles satisfy the boundary condition
\begin{equation*}
[\gamma_e]-[\gamma_{s_0^{-1}e}]+[\gamma_{s_1e}]-[\gamma_{s_2e}]=0.
\end{equation*}
Then, in the directed case, $W_R$ is identified with the cohomology
\begin{equation*}
W_R =H^1(M_R^{cap},X_0R\cup X_2R,\R).
\end{equation*}
When the graph is not directed, the space $W_R$ is given by the anti-invariant cohomology of the two covers $\Rt$
\begin{equation*}
W_R =H^1(\tilde{M}_R^{cap},X_0\tilde{R}\cup X_2\Rt,\R)^-.
\end{equation*}

\begin{prop}
\label{prop_coord_z}
The map:
\begin{eqnarray*}
|z| ~:~ \MFt_0(R)&\longrightarrow& W_R^+\\
 \lambda~~&\longrightarrow& |z_\lambda|
\end{eqnarray*}
is a bijection and identifies $W_R^+(\Z)$ with $\MFt_\Z(M)$. Moreover, $R$ is dirigible, a choice of direction allow to lift this map to:
\begin{equation*}
z: \MFt_0(R) \longrightarrow H^1(M_R^{cap},X_0R\cup X_2R,\R).
\end{equation*}
Such that
\begin{equation*}
z_e(\lambda)=\epsilon(e)|z_e(\lambda)|=\int_{[\gamma_e]}z_\lambda.
\end{equation*}
\end{prop}

\begin{rem}{Relation with train tracks:}

\begin{lem}
\label{WRtraintrack}
There is a train track $\tau_R$ such that $W_R^+$ is the set of weights on $\tau_R$.
\end{lem}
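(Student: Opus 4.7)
The plan is to build $\tau_R$ directly from the combinatorial data of $R$, indexing its switches by $X_1 R$ and its branches by $XR$ so that the switch equations become exactly the defining relations \ref{constraints_z}. Once this identification is in place, the equality between $W_R^+$ and the positive cone of transverse weights will be tautological.

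I would first set up the combinatorial train track. For each edge $\epsilon \in X_1 R$, introduce a four-valent switch $v_\epsilon$. Fixing any half-edge $e \in XR$ with $[e]_1 = \epsilon$, declare the four germs of branches at $v_\epsilon$ to be labelled by $b_e, b_{s_1 e}, b_{s_2 e}, b_{s_2 s_1 e}$, with tangent classes $\{b_e, b_{s_2 s_1 e}\}$ and $\{b_{s_1 e}, b_{s_2 e}\}$. Replacing $e$ by $s_1 e$ merely swaps the two classes, so the data at $v_\epsilon$ depends only on $\epsilon$. The associated switch equation is $w(b_e) + w(b_{s_2 s_1 e}) = w(b_{s_1 e}) + w(b_{s_2 e})$, which is exactly \ref{constraints_z}.

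Next I would attach the two endpoints of each branch $b_e$. Using the identity $s_0 s_1 s_2 = \mathrm{id}$, one has $s_2 \circ (s_0 s_1) = \mathrm{id}$, hence $b_e = b_{s_2 f}$ with $f = s_0 s_1 e$, so $b_e$ appears as a germ at the switch $v_{[s_0 s_1 e]_1}$ and lies in its second tangent class. Thus the two endpoints of $b_e$ are $v_{[e]_1}$ and $v_{[s_0 s_1 e]_1}$, and the branch sits in opposite tangent classes at these two switches. A short enumeration of the four germs at $v_{[f]_1}$ for varying $f$ confirms that $b_e$ occurs only at these two switches, proving that $\tau_R$ is a well-defined train track and that its cone of transverse weights coincides with $W_R^+$ under $z_e = w(b_e)$.

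If an embedding of $\tau_R$ into $M_R$ is wanted, it can be obtained from the zippered rectangle decomposition of paragraph \ref{zip_rect} by placing each $v_\epsilon$ at the midpoint of the corresponding edge and realising each branch $b_e$ as a transversal inside the rectangle $R_e$; the two tangent classes at a switch then correspond to the two sides of the underlying edge. The main obstacle in carrying out the plan is purely bookkeeping: one must check that the two occurrences of $b_e$ fall in opposite tangent classes, so that the smoothing is coherent, and handle the degeneracy $[e]_1 = [s_0 s_1 e]_1$, where $b_e$ becomes a loop at a single switch. All such verifications reduce to repeated applications of $s_1^2 = \mathrm{id}$ and $s_0 s_1 s_2 = \mathrm{id}$.
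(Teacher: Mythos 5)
Your construction is correct and follows essentially the paper's own strategy: build a train track directly from the combinatorial data $(XR,s_0,s_1,s_2)$ so that the switch equations are precisely the relations (\ref{constraints_z}), after which the identification of $W_R^+$ with the cone of positive transverse weights is immediate. The only real difference is the combinatorial model of the track. You place a single four-valent switch $v_\epsilon$ on each unoriented edge $\epsilon\in X_1R$, with germs $b_e,b_{s_1e},b_{s_2e},b_{s_2s_1e}$ split into the tangent classes $\{b_e,b_{s_2s_1e}\}$ and $\{b_{s_1e},b_{s_2e}\}$, so the switch relation is literally (\ref{constraints_z}) and the branch weights are exactly the $z_e$, $e\in XR$. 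The paper instead takes one trivalent switch per oriented edge and inserts an auxiliary branch $s_\epsilon$ joining the two switches attached to the same unoriented edge $\epsilon$; this is the split version of your four-valent switch, the weight of the extra branch being determined by the $z$'s, and the two tracks carry the same weight cone. Your variant buys a slightly cleaner verification (no auxiliary weights to eliminate), and your explicit check that each branch $b_e$ has endpoints $v_{[e]_1}$ and $v_{[s_0s_1e]_1}$, via $s_2(s_0s_1e)=e$, supplies a detail the paper delegates to its figure. Like the paper, you treat the degenerate configurations and the admissibility of the complementary regions of the embedded track only at the level of a remark, which is consistent with the level of detail of the original proof.
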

We construct the train track in the following way: for each directed edge in $XR$, we associate a vertex $(v_e)$. The edges of $\tau_R$ are of two types:
\begin{itemize}
\item There is an edge $s_e$ for all $e\in X_1R$; $s_e$ joins the two vertices labeled by the two directions of $e$.
\item There is an edge $s'_e$ for all $e\in XR$; it joins $v_{[e]_1}$ and $v_{[s_0e]_1}$.
\end{itemize}
Then $W_R^+$ is the set of positive weights on the train track $\tau_R$. The train track is always non-degenerate; if $W_R$ is the same space with real weights, then $\dim W_R=\dim W_R^+=\#X_1R+1$.
\end{rem}

\subsection{Deformations of metric ribbon graphs, twists and horocyclic flows}
\label{subsection_twist}
\paragraph{Twist flow along a curve:}
We use curves and foliations to study the deformations of metric ribbon graphs. As in \cite{andersen2020kontsevich}, we consider twist flows along admissible curves and relate these flows to the horocyclic flow on the space of quadratic differentials with poles. The first intuitive definition of the twist flow along a curve $\gamma$ is the same as the definition of the twist flow along a geodesic on a hyperbolic Riemann surface. Nevertheless, in the case of metric ribbon graphs, the gluing are not always defined, then the twist flow is not well defined on the combinatorial Teichmüller space $\Tc(M)$ but on a bigger space $\MF_\gamma(M)$ of foliation in $\MF_0(M)$ transverse to $\gamma$. To define the twist flow, it is somehow easier to use the horocyclic flow for quadratic differential.  If $q\in \QT(M)$, locally we can find local coordinates $z=x+iy$ such that $q=(dz)^2$. Then $U_tq$ is defined locally by $(dx+tdy+idy)^2$; $U_tq$ then defines a new quadratic differential on a different Riemann surface (see \cite{forni2013introduction} for references ). We can see that the flow is complete; for all $t\in \R$, it induces a homeomorphism: 
\begin{equation*}
    U_t~:~\QT(M)\longrightarrow\QT(M).
\end{equation*}
 The action on residues is of the form $x+iy\longrightarrow x+ty+iy$, then the horocylic flow preserves the subspace $\QT_0(M)$ of quadratic differentials with real residues, and it also preserves the level sets $\QT_0(M,L)$ of quadratic differentials with fixed residues. By using Theorem \ref{thm_hubbard_masur_poles}; for a pair of transverse foliations $(\lambda_1,\lambda_2)$ we can see that there is a foliation $\phi_{\lambda_2}^t(\lambda_1)$ transverse to $\lambda_2$ and such that
\begin{equation*}
 U_t(\lambda_1,\lambda_2)=(\phi_{\lambda_2}^t(\lambda_1),\lambda_2).
\end{equation*}
In particular, for all multi-curve $\Gamma\in \MSr(M)$, it is possible to consider the subspace $\MF_\Gamma(M)$ in $\MF(M)$ formed by foliations transverse to $\Gamma$. We have a map 
\begin{equation*}
q_\Gamma~:~\MF_\Gamma(M)\longrightarrow \QT(M).
\end{equation*}
Moreover, the image is stable under the horocyclic flow. This defines a flow
\begin{equation*}
\phi_\Gamma^t~:~\MF_\Gamma(M)\longrightarrow, \MF_\Gamma(M),
\end{equation*}

We can see that this flow corresponds to the intuitive definition of the twist flow. To construct $q_\Gamma(S)$ we cut the graph along $\Gamma$ and glue two components by adding a cylinder of height $m_\gamma$ for each curve $\gamma$ in $\Gamma$. We can also see that we have the relationship
\begin{equation*}
    \phi_\Gamma^t=\prod \phi_\gamma^{m_\gamma t}
\end{equation*}
and the twist flows along disjoint curves commute. The twist flow admits a natural generalization for foliation, moreover, we can obtain the following result:

\begin{lem}
\label{lem_twist_rib}
\begin{itemize}
\item For all metric ribbon graphs $S\in \Met(R)$ and all curves $\gamma$ on $S$, there is $\delta>0$ small enough and $R^+\ge R$ (resp.$R^-\ge R$) such that $\phi^t_\gamma(S)\in \Met(R^+)$ (resp $\phi^{-t}_\gamma(S)\in Met(R^-)$) for $t\in ]0,\delta[$.
\item Moreover, if $S\in \Met(R)$, then $\phi^t_\gamma(S)\in \Met(R)$ for $|t|$ small enough, iff $\gamma$ is admissible on $R$.
\end{itemize}
\end{lem}

This is one of the main interest in introducing admissible curves and foliations, in this case we can see that for $\lambda\in \MF_0(R)$ the flow $\phi_\lambda^t$ is given locally by a translation on $\Met(R)$. The tangent vector will be denoted $\xi_\lambda$, an explicit computation, and the result of Proposition \ref{prop_coord_x} give the following corollary: 
\begin{cor}
\label{cor_tangent_twist}
If $\lambda$ is admissible, the twist flow is locally a translation generated by the locally constant vector field $\xi_\lambda$:
\begin{equation*}
\xi_\lambda= \sum_e x_e(\lambda)\partial_{e},
\end{equation*}
coordinates $(x_e(\lambda))_{e\in X_1R}$ are defined in Proposition \ref{prop_coord_x}. Then the map 
\begin{eqnarray*}
\MF_0(R)&\to&  K_R\\
    \lambda &\to& \xi_\lambda
\end{eqnarray*}
is bijective.
\end{cor}

\paragraph{Tangent of $\cut_\Gamma$:}
\label{para_tangent_cut}
The following proposition is useful to decompose the measures on the moduli space; we restrict it to the case of directed ribbon graphs. Let $\Ro$ be a directed ribbon graph, $\Gao$ an admissible directed multi-curve, and $\Go$ the associated directed stable graph. We denote $T_{\Ro,\Gao}$ the subspace of
\begin{equation*}
T_{\Ro_{\Gao}}= \prod_c T_{\Ro_{\Gao}(c)}
\end{equation*}
defined by (see remark \ref{rem_quotient_product_stab} for notation)
\begin{equation*}
    T_{\Ro,\Gao}={\prod}_\G T_{\Ro_{\Gao(c)}}.
\end{equation*}
And let
\begin{equation*}
 T_{\Ro,\Gao}(\Z)=T_{\Ro,\Gao}\cap T_{\Ro_{\Gao}}(\Z),
\end{equation*}
which is a lattice in $T_{\Ro,\Gao}$.

\begin{prop}
\label{prop_tangent_cut}
If $\Ro$ is a directed ribbon graph and $\Gao$ is admissible on $\Ro$, there is an exact sequence:
\begin{equation*}
0\longrightarrow \Z^{X_1\Gamma}\longrightarrow T_{\Ro}(\Z)\overset{T\cut_\Gamma}{\longrightarrow }T_{\Ro,\Gao}(\Z)\longrightarrow 0.
\end{equation*}
Moreover, the first non trivial map is defined by $\gamma\to \xi_\gamma$.
\end{prop}

\begin{proof}
It is the result of a long, exact sequence of relative integral homology, we have
\begin{equation*}
 0\to H_1(\Gamma, \Z)\to H_1(M^{cap}\backslash X_0R, X_2R, \Z)\to H_1(M^{cap}\backslash X_0R, X_2R \sqcup \Gamma, \Z)\to H_0(\Gamma, \Z)\to 0.
\end{equation*}
The kernel of
\begin{equation*}
    H_1(M^{cap}\backslash X_0R, X_2R \sqcup \Gamma, \Z)\to H_0(\Gamma, \Z),
\end{equation*}
is the space $T_{\Ro,\Gao}(\Z)$; the first non-trivial arrow is the inclusion, the second space is identified with the tangent space using $f_R$ and Poincare duality. By excision, the last space is identified with
\begin{equation*}
H_1(M^{cap}_\Gamma \backslash X_0R_\Gamma, X_2R_\Gamma, \Z), 
\end{equation*}
which is also identified with $T_{R_\Gamma}$. It remains to prove that the last arrow is the tangent of the cutting map. We have for all $\gamma\in \Sit(M_\Gamma)$
\begin{equation*}
l_\gamma(S_\Gamma)=l_\gamma(S).
\end{equation*}
And then
\begin{equation*}
\cut_\Gamma^* dl_\gamma =dl_\gamma.
\end{equation*}
Moreover, we have $f_R^*(dl_\gamma)=y_\gamma$. Then, for all cocycles $\langle \cut_\Gamma(x),\gamma \rangle=\langle x,\gamma \rangle$, we can now conclude because the only map that satisfes this property is the natural inclusion. 
\end{proof}

\paragraph{Gluings and bundles for directed surfaces:}
\label{gluing_coordinates}
In this part, we study in more detail the map $\cut_\Gamma$ in a particular case. Let $\Mo$ be a directed surface and $\Gao$ be a directed multi-curve on it. We consider the subset $\MF_{\Gao}(\Mo)$ of directed foliations on $\Mo$ transverse to $\Gamma$, which are represented by an Abelian differential. Such differential induces a direction on the curves of $\Gamma$, and we assume that it corresponds to $\Gao$. For all stable graphs $\Go$, let $\Tc(\Go)$ be the subset of $\prod_c \Tc(\Go(c))$:
\begin{equation*}
\Tc(\Go)={\prod}_{\G} \Tc(\Go(c)).
\end{equation*}
Let $\Gao$ be a directed multi-curve with a directed stable graph $\Go$, then we have the following proposition:
\begin{prop}
\label{prop_cutting_measure_bundle}
$\MF_{\Gao}(\Mo)$ is stable under the twist flow along each curve in $\Gamma$; moreover, the map
\begin{equation*}
\cut_\Gamma: \MF_{\Gao}(\Mo) \longrightarrow \Tc(\Go).
\end{equation*}
Is surjective, and it is an affine $\R^{\Gamma}$ bundle compatible with the integral structure.
\end{prop}

\begin{rem}
We can also perform surgeries on each stratum. Let  $\Mdo$ a directed, decorated surfaces; $\Gado\in \MS(\Mdo)$, and $\Gdo$ is the corresponding directed, decorated stable graph. Then we can consider $\MF_{\Gado}(\Mdo)$ the directed foliations $\lambdao$ such that $\lambda$ is transverse to $\Gamma$, and the direction induced on $\Gamma$ corresponds to $\Gao$. Moreover, the square root of $q_\Gamma(\lambda)$ defines a decoration on each connected component of $\Mo_{\Gao}$ and then on $\Gao$. We assume that this decoration coincides with $\Gado$. Then we still have
\begin{equation*}
\cut_\Gamma : \MF_{\Gado}(\Mdo)\to \Tc(\Gdo),
\end{equation*}
and the statement of the last proposition remains true.
\end{rem}

We prove the following lemma, which gives the existence of local twist coordinates and also gives proposition \ref{prop_cutting_measure_bundle}.
\begin{lem}
For each $S\in \Tc(\Go)$, it exists $V\subset \Tc(\Go)$ and $U\subset \MF_{\Gao}(\Mo)$ such that
\begin{itemize}
\item $V$ is an open neighborhood of $S$ invariant by dilatation, and $U=\cut_{\Gao}^{-1}(V)$.
\item For all $\gamma\in \Gamma$, we can find a map $t_\gamma : U \longrightarrow \R$ such that
\begin{equation*}
t_\gamma(\phi^t_{\gamma'})=t_\gamma + t \delta_{\gamma,\gamma'}
\end{equation*}
for all $\gamma'\in \Gamma$.
\item There is a piecewise linear isomorphism:
\begin{equation*}
U\longrightarrow V \times \R^{\Gamma},
\end{equation*}
which induces a bijection
\begin{equation*}
U_\Z \longrightarrow V_\Z \times \Z^\Gamma.
\end{equation*}
Where $U_\Z,V_\Z$ are the sets of integer points
\end{itemize}
\end{lem}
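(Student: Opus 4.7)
The plan is to build local twist coordinates using the horocyclic flow together with the zippered rectangle construction from paragraphs \ref{coord_z} and \ref{twist_in_coord}.

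First I would fix a reference $\lambda_0 \in \cut_{\Gao}^{-1}(S)$, constructed by applying the zippered rectangle construction of proposition \ref{prop_coord_x} on each component $S(c)$ and gluing the resulting flat pieces along $\Gamma$ with an arbitrary reference shear. On a dilatation-invariant open neighborhood $V \subset \Tc(\Go)$ of $S$, each $S'(c)$ will remain in the same top cell $\Met(R(c))$ as $S(c)$ and the gluing combinatorics stay constant; set $U := \cut_{\Gao}^{-1}(V)$.

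Next I would define the twist functions. For $\lambda \in U$, proposition \ref{prop_hubbard_masur_poles} together with the orientability of $\Gao$ gives a unique abelian differential $\alpha_\lambda$ whose real part represents $\lambda$ and whose imaginary foliation contains each $\gamma \in \Gamma$ as a closed trajectory. For each such $\gamma$, pick a short transverse arc $a_\gamma$ inside the horizontal cylinder around $\gamma$ and put
\begin{equation*}
    t_\gamma(\lambda) := \int_{a_\gamma}\Im\alpha_\lambda - \int_{a_\gamma}\Im\alpha_{\lambda_0}.
\end{equation*}
Because the horocyclic flow $\phi^t_{\gamma'}$ is a pure shear supported in the cylinder of $\gamma'$ and is the identity outside, the equivariance $t_\gamma(\phi^t_{\gamma'}(\lambda)) = t_\gamma(\lambda) + t\,\delta_{\gamma,\gamma'}$ follows directly.

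The inverse map $V \times \R^{\Gamma} \to U$ is the natural assembly: given $(S', (t_\gamma))$, apply proposition \ref{prop_coord_x} on each component to build flat pieces with prescribed metric and the same relative twists as $\lambda_0$, then glue along each $\gamma$ with the additional shear $t_\gamma$. This procedure is affine in $(S', (t_\gamma))$ on each combinatorial cell and inverse to the twist coordinates, giving the piecewise linear isomorphism $U \simeq V \times \R^\Gamma$. The corollary stated in the proposition then follows by taking the inverse system of such trivializations.

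Finally, for the integral structure: integral points of $\MF_{\Gao}(\Mo)$ are admissible multi-curves by lemma \ref{integralptsZMF(R)}. Cutting an admissible multi-curve transverse to $\Gamma$ produces admissible multi-curves on each $\Go(c)$ (giving a point of $V_\Z$) together with the integer wrapping number around each $\gamma$-cylinder (giving $\Z^\Gamma$). The main obstacle is the converse: verifying that gluing integral admissible multi-curves on the pieces with integer shears yields an admissible (not merely orientable) multi-curve on $\Mo$. This is essentially the content of lemma \ref{orientablecurves} combined with lemma \ref{lem_combi_admissible}: the combinatorial word of the glued curve is the concatenation of the combinatorial words of the pieces along the boundary segments, and the concatenation keeps every step of the form $s_2^{\pm}$, so no vertex of $R$ is split. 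This yields the bijection $U \cap \MS_\Z(M) \simeq V_\Z \times \Z^\Gamma$.
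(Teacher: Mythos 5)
Your overall strategy --- cell-wise neighbourhoods, twist coordinates read off from periods of an arc crossing the cylinder of each $\gamma$, and a gluing map as inverse --- is the same as the paper's, but the formula you give for $t_\gamma$ does not work. The horocyclic flow $U_t$ acts on periods by $\Re \mapsto \Re + t\,\Im$ and fixes $\Im$; since the imaginary foliation of your $\alpha_\lambda$ is the fixed multi-curve $\Gamma$ itself, the quantity $\int_{a_\gamma}\Im\alpha_\lambda$ is just the height of the cylinder, i.e.\ the transverse measure of $\Gamma$ across $a_\gamma$, which is the same for every $\lambda\in U$ and is invariant under every $\phi^t_{\gamma'}$. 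Your $t_\gamma$ is therefore identically zero (after subtracting the reference value), and the claimed equivariance $t_\gamma(\phi^t_{\gamma'})=t_\gamma+t\,\delta_{\gamma,\gamma'}$ fails: the left-hand side equals $t_\gamma$. The twist lives in the \emph{real} period of the crossing arc; the paper sets $t_\gamma(\lambda)=\Re\langle q_\Gamma(\lambda),a_\gamma\rangle/\Im\langle q_\Gamma(\lambda),a_\gamma\rangle$, the normalisation by the (constant) imaginary period being exactly what makes the shear add $t$ rather than $t$ times the height.

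Two smaller points. For the integral statement you should take $a_\gamma$ joining a singularity on each of the two boundary components of the cylinder, as the paper does, so that its period is a relative period of the integral cohomology class; with a ``short transverse arc'' with free endpoints, together with the subtraction of an arbitrary reference $\lambda_0$, the integrality of $t_\gamma$ on $U\cap \MS_\Z(M)$ is not guaranteed. Apart from this, your choice of $U=\cut_{\Gao}^{-1}(V)$ with $V$ a cell-wise neighbourhood, the gluing inverse $V\times\R^\Gamma\to U$, and the cut-and-reglue description of integral points all match the paper's proof once the twist coordinate is corrected.
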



\newpage

\section{Acyclic decomposition:}
\subsection{Symplectic geometry on the space of metric ribbon graphs}
\label{subsection_symplectic}
There is a natural two-form on the combinatorial Teichmüller spaces defined by M. Kontsevitch in \cite{kontsevich1992intersection} and studied in more detail in \cite{zvonkine2002strebel} and \cite{andersen2020kontsevich}. Here we relate this two-form to the natural pairing on the anti-invariant cohomology.
\label{Kon_symp_form}
\paragraph{The anti-symmetric pairing on $K_R$:}
Let $R$ be a ribbon graph. As we see in paragraph \ref{paragraph_coho_ribbongraph} there are natural identifications between the tangent space and the cohomology of the ribbon graph:
\begin{equation*}
T_R\overset{f_R}{\simeq}H^1(R,X_0R,X_2R)~~~\text{and}~~~ K_R\overset{f_R}{\simeq}H^1(R,X_0R).
\end{equation*}
The space $H^1(R)$ can be identified with the anti-invariant cohomology of the two covers $\tilde{M}_{R}^{cap}$ (and the cohomology of $M_R^{cap}$ when the graph is directed). There is a natural anti-symmetric pairing given by the cup product on the cohomology of $\tilde{M}_{R}^{cap}$,
\begin{equation*}
\langle\omega_1,\omega_2\rangle = \frac{1}{2}\int_{\tilde{M}_{R}^{cap}} \omega_1\wedge\omega_2.
\end{equation*}
We can drop the $\frac{1}{2}$ and integrate over $M^{cap}_R$ in the directed case. In this way, we define an anti-symmetric pairing $\Omega_R$ on $H^1(R)$.
\begin{lem}
\label{lem_symplectic_H1(R)}
The space $(H^1(R),\Omega_R)$ is a symplectic vector space.
\end{lem}

There is a natural map from the relative cohomology to the absolute cohomology:
\begin{equation*}
K_R\simeq H^1(R,X_0R)\longrightarrow H^1(R),
\end{equation*}
We also denote $\Omega_R$,
the anti-symmetric pairing induced on $K_R$ by taking the pull back of $\Omega_R$ on $H^1(R)$.

\paragraph{Degeneration of the symplectic structure:}
\label{deg_symp}

The long exact sequence in the relative cohomology is useful to study the degeneration of the symplectic structure:
\begin{lem}
\label{lem_deg_symp_pair}
For each ribbon graph $R$, we have an exact sequence of relative cohomology:
\begin{equation*}
 \{0\} \rightarrow H^0(R)\rightarrow H^0(X_0R)\rightarrow H^1(R,X_0R)\rightarrow H^1(R) \rightarrow \{0\}.
\end{equation*}
Moreover, the dimension of $H^0(X_0R)$ is the number of vertices of $R$ with an even degree, and the dimension of $H^0(R)$ is the number of dirigible connected components of the graph $R$.
\end{lem}
\begin{proof}
We have a short, exact sequence:
\begin{equation*}
\{0\}\to C^*(R,X_0R)\to C^*(R)\to C^*(X_0R)\to \{0\},
\end{equation*}
it gives the long exact sequence of cohomology. We have $H^0(R,X_0R)=0$ because $C^0(R,X_0R)=0$; and $H^1(X_0R)=0$ because $C^1(X_0R)=0$. We also have:
\begin{equation*}
H^0(X_0R)\simeq C^0(X_0R)^-\simeq (\R^{X_0\Rt})^-.
\end{equation*}
The involution acts on $X_0\Rt$. For each $v\in X_0R$, either $v$ is odd, and then it has only one pre-image fixed by the involution. Either it is even and it has two pre-images exchanged by the involution. Then we see that the space of anti-invariant elements is isomorphic to $\R^{X_0^{even}R}$.
\end{proof}
We can see that the pairing is non-degenerate when the graph has only vertices of odd degrees. If $H_R$ is the image of $H^0(X_0R)$ and the ribbon graph $R$ is connected and dirigible, then, the dimension of $H_R$ is the number of vertices of the graph minus one.
\begin{rem}
The elements $\xi \in H_R$ are characterized by the following property:
\begin{equation*}
    dl_\lambda(\xi)=0~~~~~~\forall \lambda \in \MF_0(R).
\end{equation*}
Then twist flows along elements in the kernel foliation preserve lengths of admissible curves.
\end{rem}

\paragraph{Minimal graphs:}
From Lemma \ref{lem_deg_symp_pair},if $\Ro$ is directed and connected, the form $\Omega_R$ is non-degenerate iff the graph has only one vertex. In a dual way, such graphs are also unicellular maps; as we see later, they are building bricks to construct more complicated directed ribbon graphs.

\begin{Def}
\label{def_minimal}
A directed ribbon graph $\Ro$ is minimal iff it has only one vertex, i.e., iff $(K_R,\Omega_R)$ is a symplectic vector space.
\end{Def}

In figure \ref{fig_minimal}, we give minimal graphs for low degrees. In general, a minimal ribbon graph is not necessarily irreducible; it can have a nontrivial genus (see definition \ref{Def_irreducible} for an irreducible graph).

\begin{figure}
    \centering
    \includegraphics[width=0.8\linewidth]{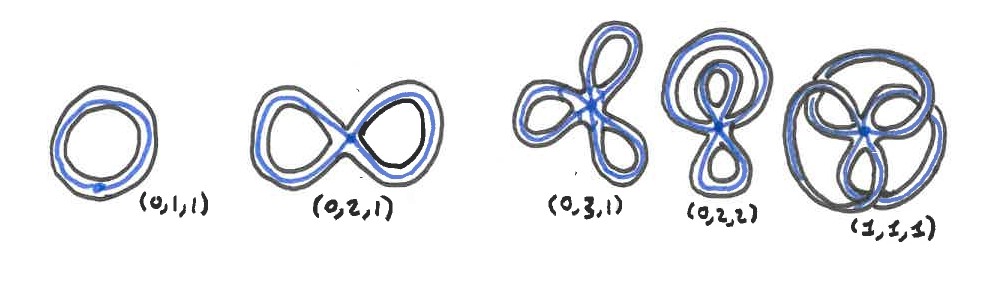}
    \caption{List of minimal graphs for low genus.}
    \label{fig_minimal}
\end{figure}
\paragraph{Hamiltonian of the horocyclic flow:}
\label{para_hamiltonian}
Here we give a sketch of the proof of the following theorem, which was also proved in \cite{andersen2017geometric} for the principal stratum.
\begin{thm}
\label{twist_ham_lenght}
Let $R$ be any ribbon graph and $\lambda \in \MF_0(R)$. We have the following identity on $K_R$:
\begin{equation*}
    \Omega_R(\xi_\lambda,.)=-dl_\lambda.
\end{equation*}
\end{thm}
\begin{proof}
    Let $H^1(\tilde{M}^{cap}_{R},X_0\tilde{R}\cup X_2\tilde{R},\R)^-$ as in section \ref{paragraph_coho_ribbongraph}. There is two projections: 
\begin{eqnarray*}
     p_R: H^1(\tilde{M}^{cap}_{R},X_0\tilde{R}\cup X_2\tilde{R},\R)^- &\to& H^1(\tilde{M}^{cap}_{R},X_0\tilde{R},\R)^-\\
    p_R^*: H^1(\tilde{M}^{cap}_{R},X_0\tilde{R}\cup X_2\tilde{R},\R)^- &\to& H^1(\tilde{M}^{cap}_{R},X_2\tilde{R},\R)^-
\end{eqnarray*}
    On each of these three spaces, there is a natural pairing given by the pullback of the pairing on $H^1(\tilde{M}^{cap}_{R},\R)^-$. This induces a paring between $H^1(\tilde{M}^{cap}_{R},X_0\tilde{R},\R)^-$ and $H^1(\tilde{M}^{cap}_{R},X_2\tilde{R},\R)^-$.  If $z,z'\in H^1(\tilde{M}^{cap}_{R},X_0\tilde{R}\cup X_2\tilde{R},\R)^-$ we have 
    \begin{equation*}
     \langle z , z' \rangle =  \langle p_R(z) , p_R(z') \rangle=  \langle p_R(z) , p_R^*(z') \rangle
    \end{equation*}
    Let $x(z)$ and $y(z')$ the coordinate on $H^1(\tilde{M}^{cap}_{R},X_0\tilde{R},\R)^-$ and $H^1(\tilde{M}^{cap}_{R},X_2\tilde{R},\R)^-$
    \begin{equation*}
        x_e(z)=\int_{[e]}z~~~~~~ y_e(z')=\int_{[e]}z'~~\forall e \in X_1R
    \end{equation*}
    We have 
    \begin{equation*}
        p_R(z)=\sum_e x_e(z)[e]^*~~~~~~ p_R~^*(z')=\sum_e y_e(z')[e^*]^*
    \end{equation*}
    Moreover, $\langle [e]^*,[e^*]^*\rangle=1$ and then 
    \begin{equation*}
        \langle p_R(z) , p_R^*(z') \rangle= \sum_e x_e(z)y_e(z').
    \end{equation*}
    Let $\lambda,\lambda'$ and $z(\lambda),z(\lambda')$ defined in paragraph \ref{zippered_dual}. We have $p_R(z(\lambda))=x(\lambda)=f_R(\xi_{\lambda})$ and $p_R^*(z(\lambda))=y(\lambda)=f_R^*(dl_{\lambda'})$, then we can obtain:
    \begin{equation*}
        \Omega_R(\xi_{\lambda},\xi_{\lambda'})= \langle z(\lambda) , z(\lambda') \rangle = \sum_e x_e(\lambda)y_e(\lambda') = dl_{\lambda'}(\xi_{\lambda}).
    \end{equation*}
   The map $\lambda'\rightarrow \xi_{\lambda'}$ is surjective on $K_R$; we can conclude that
\begin{equation*}
\Omega_R(\xi_\lambda,.)= - dl_\lambda(.).
\end{equation*}
\end{proof}
\paragraph{Computation of the pairing in coordinates:}

We give several expressions for the pairing. The elements $(x_e)_{e\in X_1R}$ and $(y_e)_{e\in X_1R}$ define $1-$forms on $W_R$; using them, it is possible to express the $2-$form $\overline{\Omega}_R$ :
\begin{equation*}
\overline{\Omega}_R=\frac{1}{2}\sum_{e\in X_1R} x_e \wedge y_e.
\end{equation*}
In terms of the $z$ coordinates, we also have the following expression:
\begin{equation*}
    \overline{\Omega}_R=\frac{1}{2}\sum_{e\in XR} z_{s_0e} \wedge z_{e},
\end{equation*}
which is in some sense the Thurston form due to similarity with the Thurston two form on the train track $\tau_R$ . There is a dual expression of this formula. The roles of $s_0$ and $s_2$ are in some sense symmetric, and we have
\begin{equation*}
\overline{\Omega}_R=\frac{1}{2}\sum_{e\in XR} z_e \wedge z_{s_2e}.
\end{equation*}
These two expressions come from the formula
\begin{equation*}
x_e\wedge y_e=z_e\wedge z_{s_0^{-1}e} + z_{s_1e}\wedge z_{s_0^{-1}s_1e}=z_{e}\wedge z_{s_2e} + z_{s_1e}\wedge z_{s_2s_1e}.
\end{equation*}

It is also possible to give expressions in terms of the forms $x$ and $y$. In \cite{kontsevich1992intersection}, M. Kontsevich introduces for each boundary $\beta$ a two-form on $K_R$ defined in the following way: Let an edge $e$ with $[e]_2=\beta$ and assume that the boundary contains $r$ edges,
\begin{equation*}
\omega_\beta=\sum_{0\le i<j<r}x_{s_2^ie}\wedge x_{s_2^je}=\sum_{1\le j\le r} z_{s_2^{i-1}e}\wedge z_{s_2^{i}e}=\sum_{e,[e]_2=\beta} z_e\wedge z_{s_2e}.
\end{equation*}
Then, by summing over the boundaries, we recover the second Thurston form and deduce
\begin{equation*}
\overline{\Omega}_R=\frac{1}{2}\sum_\beta \omega_\beta.
\end{equation*}
In a similar way, for each vertex $v$, we can fix an edge $e$ with $[e]_0=v$. If the vertex is of degree $r$, then we can set
\begin{equation*}
\hat{\omega}_v=\sum_{0\le i<j<r}(-1)^{i+j}y_{s_0^ie}\wedge y_{s_0^je}=-\sum_{1\le j\le r} z_{s_0^ie}\wedge z_{s_0^{i-1}e}
\end{equation*}
and then:
\begin{equation*}
\overline{\Omega}_R=\frac{-1}{2}\sum_\beta  \hat{\omega}_v.
\end{equation*}

\newpage
\subsection{Acyclic decomposition}
\label{subsection_acyclic_decomposition}
In this section, we state the main theorem of the paper (Theorems \ref{intro_thm_acycl_curve} and \ref{intro_thm_acycl_decomp} in the introduction).

\begin{figure}
\centering
\begin{subfigure}{0.3\textwidth}
    \includegraphics[width=\textwidth]{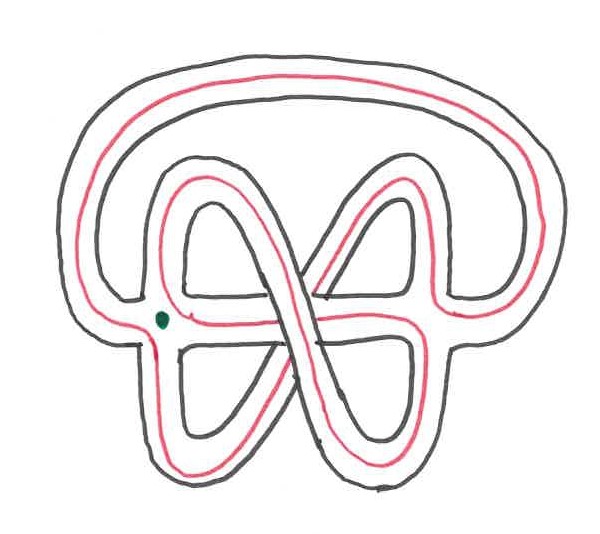}
\end{subfigure}
\begin{subfigure}{0.3\textwidth}
    \includegraphics[width=\textwidth]{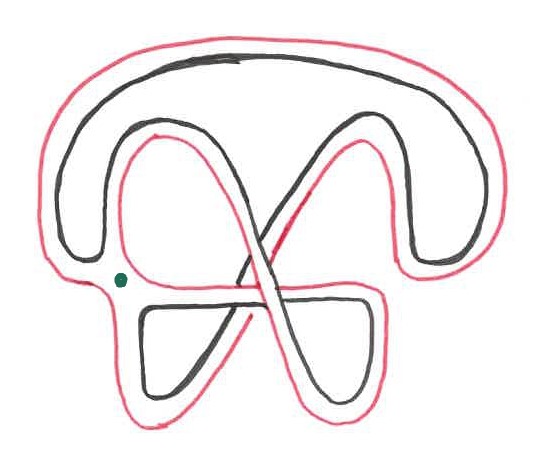}
\end{subfigure}
\begin{subfigure}{0.3\textwidth}
    \includegraphics[width=\textwidth]{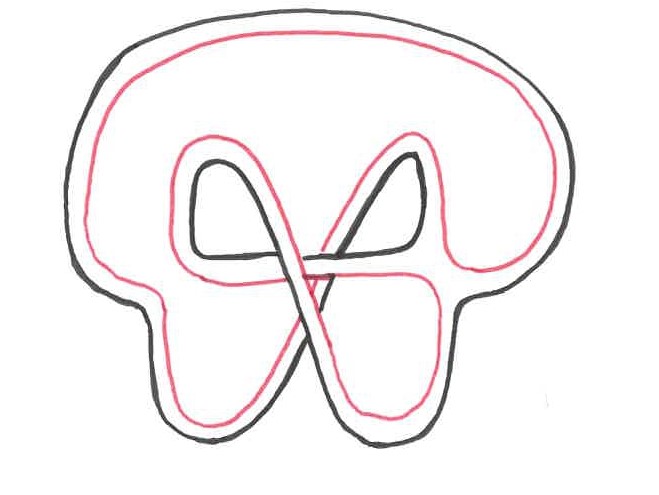}
\end{subfigure}
\caption{Acyclic decomposition of a directed graph of type $(1,1,1)$.}
\label{figure_acyclic_decomp_tore}
\end{figure}

\paragraph{Vertex surgery theorem:}

Let $\Ro$ be a directed ribbon graph. We say that an admissible multi-curve spares a vertex $v$ in $R$ from the rest of the graph if the component that contains $v$ in $R_\Gamma$ is minimal (see definition \ref{def_minimal}).

\begin{thm}
\label{thm_acycl_curve}
Let $\Ro$ be a directed metric ribbon graph with at least two vertices. For each vertex $v$, there exists a unique admissible primitive multi-curve $\Gamma_v^{+}$ such that:
\begin{itemize}
\item The directed stable graph $\Go_v$ of $\Gamma_v^+$ contains a component $c_v$ that spares $v$ from the rest of the graph.
\item All the curves in $\Gamma_v^+$ are boundaries of $c_v$.
\item $c_v$ is glued along negative boundaries only.
\end{itemize}
\end{thm}
 In other words, the last point means that the directed stable graph associated to $\Gamma_v^+$ is acyclic and the component $c_v$ is a maximal element for the partial order. The multi-curve $\Gamma_v^+$ satisfies several elementary properties.

\begin{itemize}
\item The multi-curve is functorial in the sense that if $g: (\Ro_1,v_1) \rightarrow (\Ro_2,v_2)$ is a morphism that preserves the direction, then $g\cdot \Gamma_{v_1}^+(\Ro_1)=\Gamma_{v_2}^+(\Ro_2)$.
\item There is a similar result for negative boundaries: $\Gamma_v^-(\Ro)$ is defined to be $\Gamma_v^+(-\Ro)$, where $-\Ro$ is obtained by reversing the direction of $\Ro$. If $\xi_v^{\pm}$ is the twist flow along $\Gamma_v^{\pm}(\Ro)$, then we have $\xi_v^{-}=-\xi_v^{+}$.
\item The tangent vectors of the twist flow $\xi_v^{+}$ are in $H_R$.
\end{itemize}

\paragraph{Acyclic decomposition:}
\label{para_acyclic_decomp}
We start with the following three definitions:

\begin{Def}
\begin{enumerate}
\item An acyclic decomposition of a directed ribbon graph $\Ro$ is an admissible multi-curve $\Gao$ such that the directed stable graph $\Go$ associated to $\Gao$ is acyclic.
\item An acyclic decomposition is maximal if it is not contained in another acyclic decomposition.
\item A linear order\footnote{We use this terminology about linear order to be coherent with the terminology on directed graphs and order relations in general.} on $\Ro$ is an enumeration of the vertices of the ribbon graph.
\end{enumerate}
\end{Def}
In this section, from Theorem \ref{thm_acycl_curve} we deduce Theorem \ref{thm_acycl_decomp} (Theorem \ref{intro_thm_acycl_decomp} in the introduction):
\begin{thm}
\label{thm_acycl_decomp}
Let $\Ro$ be a directed ribbon graph with a linear order; then, there is a unique admissible primitive multi-curve $\Gao$ such that:
\begin{enumerate}
\item The components of $\Ro_{\Gao}$ are minimal.
\item The directed stable graph $\Go$ associated to $\Gao$ is acyclic.
\item The linear order on the vertices induces a linear order on the acyclic stable graph.
\end{enumerate}
\end{thm}
With Lemma \ref{lem_maximal_minimal}, we can rephrase this result in the following way: Given a linear order on the ribbon graph $\Ro$, there is a unique maximal acyclic decomposition that is compatible with this linear order. We remark that different linear orders can produce the same decomposition; a given acyclic stable graph can have several linear orders.
\begin{lem}
\label{lem_maximal_minimal}
An acyclic decomposition $\Gao$ of a directed graph $\Ro$ is maximal iff all the components of $\Ro_{\Gao}$ are minimal (see definition \ref{def_minimal}).
\end{lem}
\begin{proof}
Assume that some components are not minimal. Pick one of them. Using Theorem \ref{thm_acycl_curve}, we can decompose this component along a multi-curve $\Gamma'$ by removing a vertex. Moreover, the directed stable graph associated to this new decomposition is acyclic. Using Proposition \ref{prop_gluing_acycl}, the stable graph associated with $\Gamma\sqcup \Gamma'$ is also acyclic, and then $\Gamma$ is not maximal. Assuming that all the components are minimal, it is easy to see that an admissible multi-curve on a minimal graph is necessarily associated with a directed stable graph with a non-trivial cycle, and then a finer decomposition will not be acyclic.
\end{proof}
We prove Theorem \ref{thm_acycl_decomp}.
\begin{proof}
We can proceed by induction on the number of vertices. It is, of course, trivial when the graph has only one vertex; the multi-curve is empty in this case. If we assume the property is true for ribbon graphs with $n$ vertices and let $\Ro$ be a directed graph with $n+1$ vertices and a linear order, Let $v_{n+1}$ be the vertex labeled by $n+1$. According to Theorem \ref{thm_acycl_curve}, we can extract $v_{n+1}$ by using $\Gamma_{v_{n+1}}^+$. We obtain a ribbon graph $\Ro_{n+1}$ that contains $v_{n+1}$ and a family of ribbon graphs glued to $\Ro_{n+1}$. The linear order on $\Ro$ induces a linear order on each of these graphs, and then, by assumption, we can find an acyclic decomposition of each graph compatible with the linear order. Then, using Proposition \ref{prop_gluing_acycl}, the concatenation of these decompositions is still acyclic, and the linear order on $\Ro$ induces a linear order on the directed stable graph. Uniqueness is a consequence of uniqueness in Theorem \ref{thm_acycl_curve}. Assuming that we have such decomposition, then the component that contains $v_{n+1}$ is necessarily a local maximum of the acyclic stable graph, and this component is necessarily glued along negative boundaries only. If $\Gamma_{n+1}$ are the curves in the decomposition that are in the boundary of this component, then $\Gamma_{n+1}$ satisfies the hypothesis of Theorem \ref{thm_acycl_curve} and then must be $\Gamma_{v_{n+1}}^+$. By induction, we obtain uniqueness.
\end{proof}
\begin{rem}[Acyclic versus Fenchel-Nielsen decomposition]
Acyclic decomposition and Fenchel-Nielsen decomposition are distinct notions. In the second case, the graph is not necessarily acyclic; in the first case, the components are not supposed to be irreducible. With Theorem \ref{thm_acycl_decomp}, we are not allowed to cut the graph along a curve that is a handle, some components of the decomposition can have non trivial genus.
\end{rem}
In the case of the sphere, the minimal ribbon graphs are irreducible, then we have the following corollary:

\begin{cor}
\label{cor_acyclic_decomp_sphere}
Let $\Ro$ be a directed ribbon graph on the sphere with a linear order, then there is a unique admissible multi-curve $\Gao$ such that:
\begin{enumerate}
\item The directed stable graph $\Go$ associated with $\Gao$ is acyclic.
\item The linear order on the vertices is compatible with the order on the graph.
\item All the components of $\Ro_{\Gao}$ are irreducible.
\end{enumerate}
\end{cor}


\paragraph{Proof of the Theorem \ref{thm_acycl_curve}:}
\label{proof_thm_1}
Let $\Ro=(R,\epsilon)$ be a directed ribbon graph, and let $v$ be a vertex. We construct the tangent vector associated with the twist flow along $\Gamma_v^{+}$ (see figure \ref{figure_vector_xiv}). Let $\xi_v^{+}(\Ro)$ be the vector in $T_R$ defined by
\begin{equation*}
\xi_v^{+}(\Ro)=-\sum_{e,[e]_0=v} \epsilon(e) \partial_{[e]_1}.
\end{equation*}
By using Proposition \ref{prop_x_admissible_curves}, there is a unique admissible multi-curve $\Gamma^+_v(\Ro)\in \MS_\Z(R)$ such that $\xi_{\Gamma^+_v(\Ro)}=\xi_v^{+}(\Ro)$. We prove that $\Gamma^+_v$ satisfies all the desired properties. We start by proving two lemmas.

\begin{figure}
    \centering
    \includegraphics[width=0.3\linewidth]{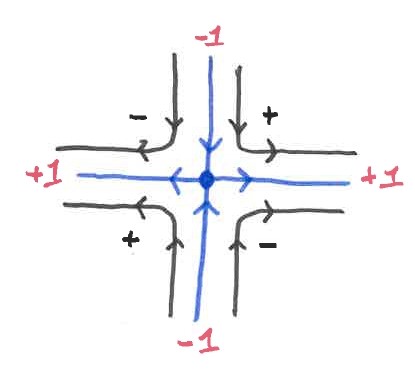}
    \caption{The vector $\xi_v^{+}$.}
    \label{figure_vector_xiv}
\end{figure}
\begin{lem}
\label{basis_HR}
The vector $\xi_v^{+}(\Ro)$ belongs to the image of $H_R$ (see paragraph \ref{deg_symp}) and the vectors $(\xi_v^+)_{v\in X_0R}$ span $H_R$. Moreover, if $\Ro$ is connected, the only relations between the vectors $(\xi_v^{+}(\Ro))_{v\in X_0R}$ are proportional to
\begin{equation*}
\sum_{v\in X_0R} \xi_v^{+}(\Ro) =0.
\end{equation*}
\end{lem}

\begin{proof}
We have the exact sequence for the relative homology:
\begin{equation*}
\{0\}\longleftarrow H_0(R) \longleftarrow H_0(X_0R)\overset{\partial}{\longleftarrow} H_1(R,X_0R) \longleftarrow H_1(R)
\end{equation*}
Moreover, the boundary map is just
\begin{equation*}
\partial [e]= [s_1e]_0-[e]_0.
\end{equation*}
Then, by duality, the boundary map in the cohomology $d : H^0(X_0R)\longrightarrow H^1(R,X_0R)$ is defined by
\begin{equation*}
\langle d[v], [e]\rangle = \langle[v], d[e]\rangle = \delta_{v,[s_1e]_0}-\delta_{v,[e]_0}.
\end{equation*}
By using the identification between $H^1(R,X_0R)$ and $K_R$, we obtain
\begin{equation*}
d[v]=\sum_{[e],\epsilon(e)=1}\langle d[v], [e]\rangle\partial_e = \xi_v^+.
\end{equation*}
Then, by using the exact sequence in relative cohomology (see Lemma \ref{lem_deg_symp_pair}, we can finish the proof.
\end{proof}

From Lemma \ref{basis_HR}, we can obtain the following proposition:

\begin{prop}
\label{prop_minimal_carac_v}
If $\Ro$ is connected and $v\in X_0R$, we have $\xi_v^+(\Ro)=0$ iff the graph $\Ro$ is minimal.
\end{prop}

The following lemma is also elementary but very useful. 

\begin{lem}
\label{lem_cut_v}
Let $\Gamma$ be an admissible curve, $v\in X_0R$, and $v'\in X_0R_\Gamma$ be the image of $v$ in the graph $R_\Gamma$. Then we have the following relation: 
\begin{equation*}
 T\cut_\Gamma( \xi_v^+(\Ro)) = \xi_{v'}^+(\Ro_{\Gao}).
\end{equation*}
\end{lem}

\begin{proof}[Lemma \ref{lem_cut_v}]
From part \ref{paragraph_coho_ribbongraph}, we have the identification 
\begin{equation*}
T_R\simeq H_1(M^{cap}_R\backslash X_0R,X_2R,\R).
\end{equation*}
The vector $\xi_v^+(\Ro)$ corresponds to a circle around the vertex $v$ in the homology. The tangent map of $\cut_\Gamma$ corresponds to the map in homology.
\begin{equation*}
H_1(M^{cap}_R\backslash X_0R,X_2R,\R)\longrightarrow H_1(M^{cap}_R\backslash X_0R,X_2R \sqcup \Gamma,\R)\simeq H_1(M^{cap}_{R_\Gamma}\backslash X_0R_\Gamma,X_2R_\Gamma,\R).
\end{equation*}
This map is the inclusion of the homology into the relative homology, and then it maps the circle around $v$ to the circle around $v'$.
\end{proof}

We can prove the first point in Theorem \ref{thm_acycl_curve}. Let $R_v^+$ be the component of $\Ro_{\Gamma_v^+}$ that contains $v$.

\begin{lem}
The graph $R_v^+$ is minimal.
\end{lem}

\begin{proof}
By using Lemma \ref{lem_cut_v}, we have the relation:
\begin{equation*}
T\cut_v(\xi_v^+(\Ro))=\xi_v^+(R_v^+).
\end{equation*}
But as we seen in Proposition \ref{prop_tangent_cut}, the twist flow along $\Gamma_v^+$ is in the kernel of $T\cut_{\Gamma_v^+}$, so we have $T\cut_v(\xi_v^+(\Ro))=0$. and then $\xi_v^+(R_v^+)=0$ by using Lemma \ref{lem_cut_v}. Moreover, by using Proposition \ref{prop_minimal_carac_v}, we can conclude that $R_v^+$ is minimal.
\end{proof}

Let $\Go_{v,+}$ be the stable graph associated with $\Gamma_v^+$. We decompose $\Gamma_v^+$ into three sets of curves $A_i~,~i=1...3$. Where:
\begin{itemize}
\item $A_1$ are the curves that join a boundary of $R_v^+$ to another other vertex of $\G_v^+$.
\item $A_2$ are the curves that are not in the boundary of $R_v^+$
\item $A_3$ are the curves that connects two boundaries of $R_v^+$
\end{itemize}
\begin{lem}
The sets $A_2,A_3$ are empties.
\end{lem}

\begin{proof}
Consider the graph $\Ro_{A_1}$ obtained after cutting $\Ro$ along the curves in $A_1$ we can write it as $\Ro_{v,1}\sqcup \Ro_{v,2}$, where $\Ro_{v,1}$ is the connected component that contains $v$ and $\Ro_{v,2}$ is the union of all the other components. By the Lemma \ref{lem_cut_v} we have
\begin{equation*}
T\cut_{A_1}(\xi_v^+(\Ro))=\xi_v^+(\Ro_{A_1})=\xi_v^+(\Ro_{v,1}).
\end{equation*}
The graph $\Ro_{v,1}$ is minimal because $(\Ro_{v,1})_{A_3}=\Ro_{v,+}$ then we have
\begin{equation*}
\xi_v^+\in \Ker~( T\cut_A).
\end{equation*}
By using Proposition \ref{prop_tangent_cut}, the kernel of $T\cut_{A_2}$ is generated by twist flows along $A$, and then
\begin{equation*}
\xi_v^+\in \bigoplus_{\gamma\in A_1} \Z~\xi_\gamma.
\end{equation*}
By using Proposition \ref{prop_tangent_cut} again, the tangent vectors of twist flow along disjoint curves are independent. Then the vectors $\xi_\gamma,\gamma\in \Gamma_v^+$ are free. By writing
\begin{equation*}
\xi_v^+=\xi_1+\xi_2+\xi_3,
\end{equation*}
with $\xi_i\in \text{Span}(\xi_\gamma,~\gamma\in A_i)$, we see that $\xi_2,\xi_3$ must vanish, which implies that $A_2,A_3$ must be empty.
\end{proof}

We give an algorithmic construction of the curve $\Gamma^+_v$:
\begin{itemize}
\item Start with an edge $e_0$ with $[e_0]_0=v$ and $\epsilon(e_0)=1$.
\item If $[s^+e_k]_0=v$ then $e_{k+1}=s^- e_k$.
\item Else, if $[s^+e_k]_0\neq v$, then $e_{k+1}=s^+e_k$.
\end{itemize}

After some time, the process ends at $e_0$, and the result is a combinatorial curve. If the curve does not contain all the directed edges $e$ with $\epsilon(e)=1$ and $[e]_0=v$, then we restart the procedure on another edge. In other words, to construct the curve, we take all the positive boundaries that meet $v$ and perform cutting and gluing at $v$, which are described in figure \ref{figure_symp_curve_constr}. At the end, the procedure creates the minimal representation of a family of curves $\tilde{\Gamma}_v^+$. From the construction and result of section \ref{subsection_admissible}, we can derive

\begin{itemize}
\item The intersection coordinates $y_{e}(\widetilde{\Gamma}_v^+)$ are in $\{0,1\}$,
\item Each curve in $\tilde{\Gamma}_v^+$ is simple, and $\tilde{\Gamma}_v^+$ is a multi-curve.
\item Each curve in $\tilde{\Gamma}_v^+$ is admissible.
\end{itemize}

The multi-curve $\tilde{\Gamma}_v^+$ is in $\widetilde{\MS}_\Z(R)$; all connected components are either in $\Si(M)$ or are homotopic to a negative boundary. Let $B^+$ be the positive boundaries of $\Ro$ that are adjacent to $v$ and $B^-$ be the negative boundaries of $\Ro$ that are adjacent to $v$ only. From the construction we can obtain the following lemma:

\begin{lem}
The curve constructed in this way is $\Gamma^+_v + \sum_{\beta\in B^-} \beta$, then we have on $T_R$
\begin{equation*}
dl_{\Gamma^+_v}=\sum_{\beta\in B^+} dl_\beta - \sum_{\beta\in B^-} dl_\beta.
\end{equation*}
\end{lem}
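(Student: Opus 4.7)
The plan is to unpack the explicit combinatorial construction of $\tilde{\Gamma}_v^+$ just given, identify its components, and compute the intersection numbers $y_e(\tilde{\Gamma}_v^+)$ directly from the combinatorial sequence. The two main tools will be the formula $l_\gamma(S)=\sum_{e\in X_1R} m_e\, y_e(\gamma)$ from paragraph \ref{ribbon_filling_arc}, which turns both claims into counting problems on the sequence of half-edges, together with the uniqueness corollary stated just above the construction, which identifies an admissible multi-curve from its twist vector.

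For the decomposition $\tilde{\Gamma}_v^+=\Gamma_v^++\sum_{\beta\in B^-}\beta$, I would first split $\tilde{\Gamma}_v^+$ into essential components and components homotopic to boundary curves of $M$. Using the recipe of paragraph \ref{twist_in_coord}, I would read off the twist vector of the essential part by inspecting each visit of the combinatorial sequence to $v$: the pair of consecutive turns there is of one of the two mixed types $(s_2^+,s_2^-)$ or $(s_2^-,s_2^+)$ and contributes $\pm 1$ to the coefficient of the incident edge with sign equal to $\epsilon$ of the corresponding half-edge, whereas any boundary-homotopic component is a pure $s_2^+$ loop and contributes zero. Summing over all visits to $v$ yields $\sum_{a:\,[a]_0=v}\epsilon(a)\,\partial_{[a]_1}=\xi_v^+(R)$, so by the uniqueness corollary the essential part must coincide with $\Gamma_v^+$. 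For the inessential components, I would argue that a component is homotopic to a boundary $\beta$ exactly when the right-turn scan $s_1s_0^{-1}s_1$ encloses $\beta$ without crossing any vertex other than $v$, which happens iff every half-edge of $\beta$ lies at $v$, i.e.\ iff $\beta\in B^-$, each contributing exactly one loop.

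For the length formula I would read $y_e(\tilde{\Gamma}_v^+)$ off the same combinatorial sequence: the unoriented edge $e$ appears in the sequence once for every half-edge $a$ with $[a]_1=e$ and $[a]_2\in B^+$, since the scan exhausts each positive boundary side at $v$ exactly once. Comparing with $y_e(\beta)=\#\{a\in XR:[a]_1=e,\,[a]_2=\beta\}$ from the boundary length formula of paragraph \ref{orient_boundaries} gives $y_e(\tilde{\Gamma}_v^+)=\sum_{\beta\in B^+}y_e(\beta)$ and hence $dl_{\tilde{\Gamma}_v^+}=\sum_{\beta\in B^+}dl_\beta$ on $T_R$. Combined with linearity of $dl$ in the multi-curve and the decomposition of the previous step, this produces $dl_{\Gamma_v^+}=\sum_{\beta\in B^+}dl_\beta-\sum_{\beta\in B^-}dl_\beta$. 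The main obstacle will be the combinatorial bookkeeping of the first step — avoiding double-counting of boundary components, handling loop edges at $v$, and treating positive boundaries that meet $v$ several times — where the orientability of $R$, giving the alternation of $\epsilon$ under $s_0$, is essential to make the accounting consistent.
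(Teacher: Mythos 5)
Your proposal is correct and follows essentially the route the paper intends: the paper gives no detailed argument for this lemma beyond the remark that it follows ``from the construction used in section \ref{twist_in_coord}'', i.e.\ from reading the twist vector and the intersection numbers $y_e$ off the explicit combinatorial representation of $\tilde{\Gamma}_v^+$ and invoking the uniqueness of an admissible multi-curve with twist vector $\xi_v^+(R)$. Your write-up simply fills in that computation (identification of the boundary-parallel components with $B^-$, the alternation of $\epsilon$ under $s_0$ at $v$, and the count $y_e(\tilde{\Gamma}_v^+)=\sum_{\beta\in B^+}y_e(\beta)$), which is consistent with the paper's sketch.
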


\begin{figure}
\centering
\includegraphics[height=4cm]{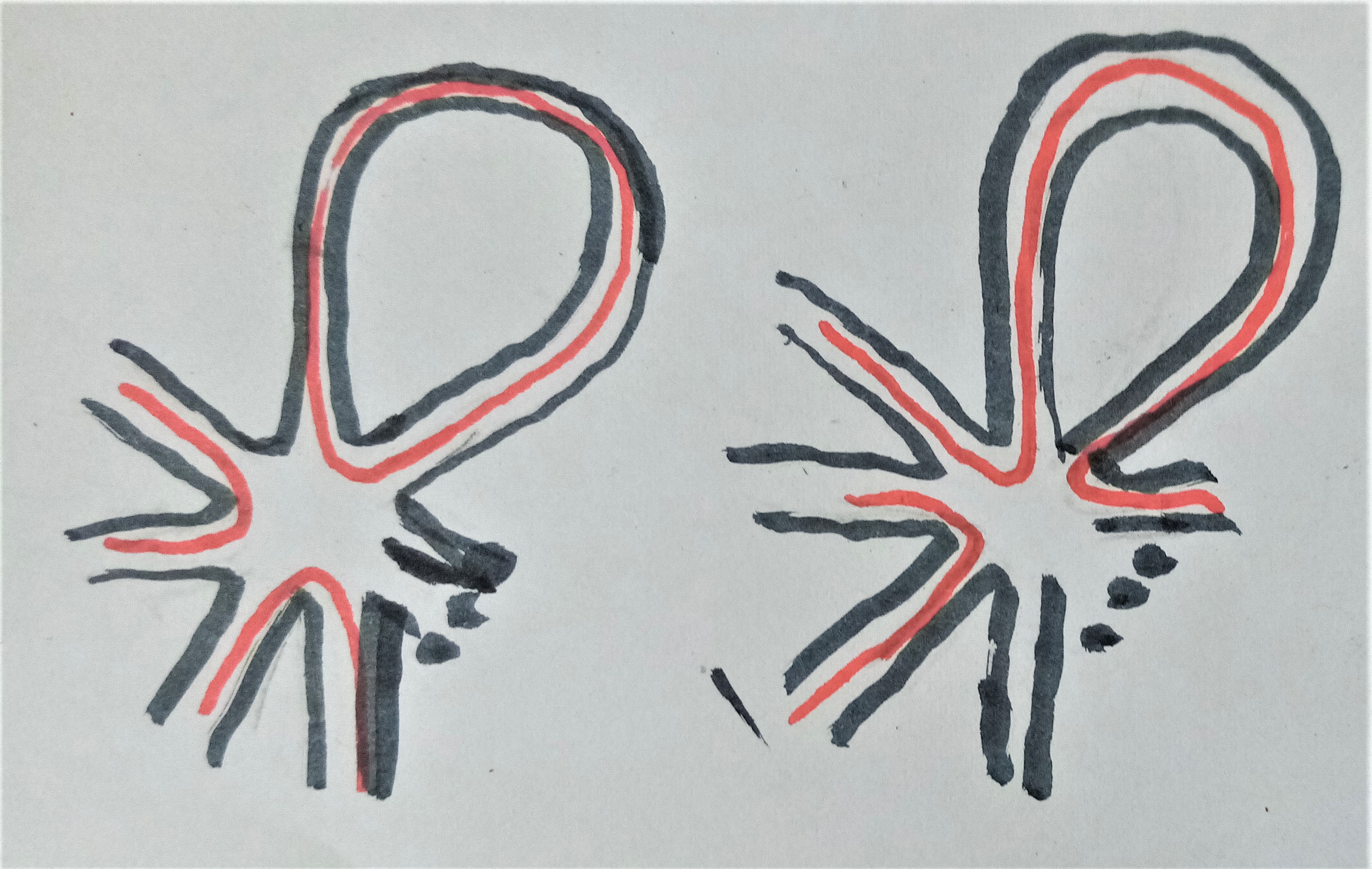}
\caption{Illustration of the construction.}
\label{figure_symp_curve_constr}
\end{figure}

We prove that $\Gamma_v^+$ satisfies the last property in Theorem \ref{thm_acycl_decomp}. Let $M_v=M_{R_v^+}$ be the component that contains $v$ and $M=M_R$. We have the relative sequence in cohomology:
\begin{equation*}
H^1(M,X_0R,\R) \longrightarrow H^1(M_v,\R) \longrightarrow H^2(M,M_v,\R) \longrightarrow 0,
\end{equation*}
where the first map is the projection $\cut_v$ of $\cut_{\Gamma_v^+}$
\begin{equation*}
T_R\longrightarrow T_{R_{\Gamma_v^+}} \longrightarrow T_{R_v^+}
\end{equation*}
Then the obstruction to the map being surjective is in $H^2(M,M_v,\R)$.

\begin{lem}
\label{lem_H^2(M,M_v,R)}
We have $H^2(M,M_v,\R)=\{0\}$.
\end{lem}
Let $R_v(c)$ be a component of $R_{\Gamma_v^+}$ that doesn't contain $v$, and $\Gamma_v^+(c)$ be the boundaries of $R_v(c)$ that are in $\Gamma_v^+$. Let $\epsilon_v$ be the direction on $\Gamma_v^+$ induced by $R_v^+$. Then we have the following fact:
\begin{lem}
\label{lem_constant_direct_component}
The direction $\epsilon_v$ is constant on $\Gamma_v^+(c)$.
\end{lem}

\begin{proof}
Assume it is not. As the curves in $\Gamma_v^+(c)$ are naturally oriented they define an element $[\Gamma_v^+(c)]\in H_1(M_R\backslash X_0R,\R)$. We see that we can find a simple curve in the homology such that
\begin{equation*}
\langle [\Gamma_v^+(c)],[\gamma']\rangle \neq 0;~~~\text{and}~~~ \langle [\Gamma_v^+(c')],[\gamma']\rangle = 0,~~~\text{if} ~~c\neq c'.
\end{equation*}
Then
\begin{equation*}
dl_{\Gamma_v^+}(\xi_{\gamma'})=\langle [\Gamma_v^+],[\gamma']\rangle\neq 0
\end{equation*}
but by assumption, $dl_{\Gamma_v^+}=0.$ on $K_R$, and then the direction must be constant.
\end{proof}

Then we prove Lemma \ref{lem_H^2(M,M_v,R)}:

\begin{proof}
The dimension of the space $H^2(M,M_v,\R)$ is equal to the number of components of $M_{\Gamma_v^+}$ such that all the boundaries of the component are in $\Gamma_v^+$. Then, using Lemma \ref{lem_constant_direct_component}, this is impossible because the direction is constant on a given component.
\end{proof}
From Lemma \ref{lem_H^2(M,M_v,R)}, we obtain

\begin{lem}
\label{relation_trv}
On $T_{R_v}$, we have the relation
\begin{equation*}
dl_{\Gamma_v^+}+dl_{B^-}-dl_{B^+}=0
\end{equation*}
\end{lem}

From Proposition \ref{prop_dimorientability}, the only relation on $T_{R_v}$ is given by the direction $\epsilon_v$ of the graph. Then, by writing
\begin{equation*}
dl_{\Gamma_v^+}=\sum_{\gamma\in \Gamma_v^+}  m_\gamma dl_\gamma.
\end{equation*}
The relation of Lemma \ref{relation_trv} is proportional to
\begin{equation*}
\sum_{\gamma\in \Gamma_v^+} \epsilon_v(\gamma) dl_\gamma + dl_{B^+}-dl_{B^-}
\end{equation*}
then we obtain that
\begin{equation*}
\epsilon_v(\gamma)=-m_\gamma
\end{equation*}
Then we get that $m_\gamma=1$ and $\epsilon_v(\gamma)=-1$. Then we see that the multi-curve is primitive and satisfies the third point in Theorem \ref{thm_acycl_curve}. \\ 

To prove the converse statement of Theorem \ref{thm_acycl_curve}, we give the following generalization of the last result: For each subset $I\subset X_0R$, let
\begin{equation*}
\xi_I^+=\sum_{v\in I}\xi_v^+,
\end{equation*}
and $\xi_I^-$ defined in a similar way. Then $\xi_I^+$ satisfies the following properties:
\begin{prop}
\label{acyclicI}
There is a primitive multi-curve $\Gamma_I^+$ such that
\begin{equation*}
\xi_I^+=\xi_{\Gamma_I^+}
\end{equation*}
Moreover, the multi-curve satisfies
\begin{itemize}
\item We have $\Ro_{\Gamma_I^+}=R^{\circ,+}_I\sqcup R^{\circ,-}_{I^c}$ where $X_0 R_I^+=I$,
\item All the curves in $\Gamma_I^+$ rely on $R^{\circ,+}_I$ to $R^{\circ,-}_{I^c}$,
\item $R^{\circ,+}_I$ is glued along negative boundaries.
\end{itemize}
\end{prop}

We can prove the converse statement of Theorem \ref{thm_acycl_curve}.

\begin{prop}
If $\Gamma$ satisfies the properties of Theorem \ref{thm_acycl_curve}, it is $\Gamma_v^+$.
\end{prop}
\begin{rem}
By using the same argument, it is possible to prove that $\Gamma_I^+$ is the unique primitive multi-curve that fills the conditions of Proposition \ref{acyclicI}.
\end{rem}
\begin{proof}
Let $\Gamma$ be a multi-curve that fills the condition of Theorem \ref{thm_acycl_curve}. Then we have the relation on $T_R$.
\begin{equation*}
dl_\Gamma=dl_{B^+}-dl_{B^-}
\end{equation*}
Which means that $\xi_\Gamma$ belongs to $H_R$, and then from Lemma \ref{basis_HR}
\begin{equation*}
\xi_\Gamma=\sum_{u\neq v} \lambda_u \xi_u.
\end{equation*}
Let $\Ro_{\Gao}(c)$ be a connected component of $\Ro_{\Gao}$ that doesn't contain $v$; $\Gao_c$ are the curves in $\Gao$ that are in the boundary of $\Ro_{\Gao}(c)$; and $B_+(c),B_-(c)$ are the other boundaries of $\Ro_{\Gao}(c)$. We have the relation on $T_R$.
\begin{equation*}
    dl_{\Gao_c}= dl_{B_-(c)}-dl_{B_+(c)}
\end{equation*}
and then $\xi_{\Gao(c)}$ is in $H_R$. Moreover, $\xi_{\Gao(c)}$ is in the kernel of $T\cut_{\Gao_c}$, the graph $\Ro_{\Gao(c)}$ has only two connected components, and then the space $\Ker (T\cut_{\Gamma_c})\cap H_R$ is a one-dimensional vector space generated by $\xi_{X_0R_\Gamma(c)}^+$. Then
\begin{equation*}
\xi_{\Gamma(c)}=\lambda_c \xi_{X_0R_\Gamma(c)}^+.
\end{equation*}
If $\Gamma(c)$ is primitive, then $\lambda_c\in \{\pm 1\}$.
We have $-\xi_{X_0R_\Gamma(c)}^+=\xi_{X_0R_\Gamma(c)}^-$ and $R_{\Gamma(c)}$ is glued along positive boundaries, then we conclude from Proposition \ref{acyclicI} that $\lambda_c=-1$. To conclude, we have
\begin{equation*}
\xi_\Gamma=\sum_c \xi_{\Gamma(c)} = -\sum_c \xi_{X_0R_\Gamma(c)}^+
\end{equation*}
because the curves are disjoint. From Lemma \ref{basis_HR}, we have
\begin{equation*}
-\sum_c \xi_{X_0R_\Gamma(c)}^+ =\xi_v^+,
\end{equation*}
And then we conclude the proof: $\xi_\Gamma=\xi_v^+$.
\end{proof}

\paragraph{Extracting a pair of pants on a directed surface:}
\label{acycl_pant}

In the generic case, when there is only vertices of degree $4$, the only minimal ribbon graphs are topological pairs of pants. In this case, minimal and irreducible ribbon graphs coincide, and then Theorem \ref{thm_acycl_decomp} gives a particular family of canonical Fenchel-Nielsen decompositions of a ribbon graph. Let $\Ro$ be a generic directed ribbon graph, an embedded bounded pair of pants \footnote{The terminology is motivated by the third condition; it implies that the length of such a curve is necessarily bounded by a function of the lengths of the boundaries. This implies by Proposition \ref{prop_coord_x} that there is only a finite number of bounded embedded pairs of pants.} in $\Ro$ is an admissible curve $\Gao$ such that:

\begin{itemize}
\item There is a component $c_v$\footnote{These three conditions also imply that the choice of the marked component $c_v$ is canonical. Moreover, there is only one directed ribbon graph on a pair of pants and it contains a unique vertex of order four. Then a bounded pair of pants on a generic directed metric ribbon graph spares a vertex from the rest of the graph.} of $\Go$, which is a pair of pants.
\item All the curves in $\Gamma$ are in the boundary of $c_v$.
\item $c_v$ is glued along it's negative boundaries.
\end{itemize}

\begin{thm}
\label{thm_bounded_pants}
For each generic directed metric ribbon graph $S$ and each vertex $v$, there exists a unique bounded pair of pants $\Gamma^+_v$ that spares $v$ from the rest of the surface.
\end{thm}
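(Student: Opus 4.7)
The plan is to deduce Theorem \ref{thm_bounded_pants} directly from the acyclic decomposition Theorem \ref{thm_acycl_curve} by checking that, in the generic case, the component $c_0$ arising there is automatically a topological pair of pants. The three defining conditions of a bounded pair of pants are exactly the three conditions of Theorem \ref{thm_acycl_curve} together with the additional constraint that $c_0$ be a pair of pants. Since $R$ has only four-valent vertices, the lemma following Lemma \ref{lem_combi_admissible} guarantees that orientable multi-curves coincide with admissible ones, so admissibility is for free. Therefore both existence and uniqueness will follow once one identifies $\Gamma_v^+$ with the multi-curve supplied by Theorem \ref{thm_acycl_curve} and verifies that its distinguished component is a pair of pants with bounded boundary curves.

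Assuming $R$ has at least two vertices, I would apply Theorem \ref{thm_acycl_curve} to obtain $\Gamma_v^+$, which separates $v$ from the rest via a component $c_0$ glued along its negative boundaries. By construction $v$ is the only vertex of $c_0$ and has degree four, so the underlying ribbon graph of $c_0$ satisfies $V=1$ and $E=2$, hence $\chi(c_0)=V-E=-1$. Writing $n$ for the number of boundaries of $c_0$, the relation $\chi(c_0)=2-2g(c_0)-n$ gives $2g(c_0)+n=3$, whose integer solutions are $(g,n)=(0,3)$ and $(g,n)=(1,1)$. The orientation inherited on $c_0$ must be non-constant on the boundary, so both positive and negative boundaries exist, forcing $n\geq 2$. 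This rules out the torus with one boundary and pins $c_0$ down as a pair of pants. The case where $R$ has a single vertex is handled by the same Euler characteristic count applied to $R$ itself: $R$ is already a pair of pants and $\Gamma_v^+=\emptyset$ trivially satisfies the three conditions.

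The main obstacle, and the only substantive geometric step, is precisely this rigidity argument ruling out the torus with one boundary; without the orientation's non-constancy, the single-vertex four-valent topology would not be pinned down. For boundedness, the third condition forces every edge of $\Go_v$ incident to $c_0$ to exit through a negative boundary of $c_0$, so no absolute cycle of $\Go_v$ can pass through $c_0$; since every curve of $\Gamma_v^+$ is incident to $c_0$, Corollary \ref{prop_topo_edges} yields boundedness. Uniqueness is immediate from Theorem \ref{thm_acycl_curve}: any bounded pair of pants separating $v$ from the rest satisfies the three hypotheses of that theorem and is admissible (since orientable $=$ admissible at a four-valent graph), hence must coincide with $\Gamma_v^+$.
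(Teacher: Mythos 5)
Your proposal is correct and follows essentially the same route as the paper, which presents Theorem \ref{thm_bounded_pants} as a direct reformulation of Theorem \ref{thm_acycl_curve} in the generic case, after observing that the only minimal (one-vertex, orientable) four-valent ribbon graph lives on a pair of pants and that orientable and admissible curves coincide for four-valent graphs. Your Euler-characteristic count ruling out the one-holed torus via the non-constancy of the boundary orientation, and your use of the negative-gluing condition to get boundedness, simply make explicit the justifications the paper leaves implicit in the paragraph preceding the statement.
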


A reformulation of Theorem \ref{thm_acycl_decomp} gives the following result:

\begin{cor}
\label{prop_bounded_pants_decomp}
Let $\Ro$ be a generic directed ribbon graph with a linear order. There is a unique directed multi-curve $\Gao$ such that
\begin{enumerate}
\item $\Gao$ is maximal, i.e., components of $\Ro_{\Gao}$ are pairs of pants.
\item The directed stable graph $\Go$ associated with $\Gao$ is acyclic.
\item The linear order on $\Ro$ induces a linear order on $\Go$.
\end{enumerate}
\end{cor}

\newpage

\section{Recursion for volumes of moduli space of directed metric ribbon graphs:}
In this part, we give the proof of Theorem \ref{intro_thm_recurrence_1} given in the introduction. We use Theorem \ref{thm_acycl_curve} and integration formulas to compute integrals over moduli spaces of directed metric ribbon graphs; these formulas were proved in \cite{andersen2020kontsevich} in the case of trivalent graphs and were inspired by the pioneering work of M. Mirzakhani in \cite{mirzakhani2007simple}.

\subsection{Statistics for multi-curves and desintegration of measures:}

 We cover the case of generic directed metric ribbon graphs, but the results are also valid for other strata of the moduli spaces of directed metric ribbon graphs. Let $\Mo\in \bordo$ and $\Gao \in \MS(\Mo)$ we set:
\begin{equation*}
\Tcs_{\Gao}(\Mo)=\{(S,\Gamma)~~|~~ S\in \Tcs(\Mo), \Gamma \in \MF_0(S) ,~~\text{and}~~\Gao=\Gao_S\}.
\end{equation*}
It is the space of generic directed metric ribbon graphs $S$ such that $\Gamma$ is admissible on $S$ and the direction on it, induced by $S$, corresponds to $\Gao$.
 Let $\Go$ be the stable graph of $\Gao$, the lenght of curves in $\Gamma$ defines a map $L_\Gamma: \Tcs_{\Gao}(\Mo)\to \Lambda_{\Go}$. There is an action of $\Stab(\Gao)$ on $ \Tcs_{\Gao}(\Mo)$, and we denote:
\begin{equation*}
\Mcs_{\Go}(\Mo) = \Tcs_{\Gao}(\Mo)/\Stab(\Gao).
\end{equation*}
 The map $L_{\Gamma}$ defines a map $L_{\Go}$ on $\Mcs_{\Go}(\Mo)$ but with values on $\Lambda_{\Go}/\Aut(\Go)$. There is a canonical map:
\begin{equation*}
\pi_{\Go}~:~\Mcs_{\Go}(\Mo) \longrightarrow \Mcs(\Mo).
\end{equation*}
The fiber $\pi_{\Go}^{-1}(S)$ over $S$ is the set $\mathcal{O}_{\Gao}(S)$ of admissible multi-curves on $S$ that are in the orbit of $\Gao$, or equivalently, admissible multi-curves with a directed stable graph given by $\Go$. The map $\pi_{\Go}$ is a covering over each cell, and the space $\Mcs_{\Go}(\Mo)$ is equipped by the pullback of the measure on $\Mcs(\Mo)$.

As in \cite{andersen2020kontsevich}, \cite{andersen2023topological} we consider statistics for the distribution of the length of curves in a multi-curve. If $ F: \Lambda_{\Go}/\Aut(\Go) \to \R_+$ is a continuous function. We define $N_{\Go}F(S)$ as the sum of $F(L_{\Gamma}(S))$ on all admissible multi-curves with stable graph $\Go$:
\begin{equation*}
N_{\Go}F(S)=\sum_{\Gamma\in \pi^{-1}_{\Go}(S)} F(L_{\Gamma}(S)).
\end{equation*}
This is, by definition, the push forward of $F\circ L_{\Gamma}$ under $\pi_{\Go}$. The function $ N_{\Go}F$ is well defined on the moduli space $\Mc(\Mo)$ because of the relation
\begin{equation*}
 F(L_{g\cdot \Gamma}(g\cdot S))=F(L_\Gamma(S)),
\end{equation*}
and the map $g: \MS(R)\longrightarrow \MS(g\cdot R)$ preserves the direction (for $g\in \Mod(M)$). Then it satisfies the following relation, which is the formula for a push forward along a covering:
\begin{equation*}
\int_{\Mcs(\Mo,L)}N_{\Go}F(S)d\mu_{\Mo}(L)=\int_{\Mcs_{\Go}(\Mo,L)}F(L_\Gamma(S))d\mu_{\Mo}(L).
\end{equation*}
The object of this section is to express the RHS of the formula.

\begin{prop}
\label{prop_integral_formula_2}
The integral of $N_{\Go}F(S)$ satisfies the Mirzakhani integral formula \cite{mirzakhani2007simple}:
\begin{equation*}
\int_{\Mc(\Mo,L)}N_{\Go}F(S)d\mu_{\Mo}(L)=\frac{1}{\#\Aut(\Go)}\int_{x\in \Lambda_{\Go}(L)}F(x) \prod_{c\in X_0\G} V_{\Go(c)}(L_c(x)) \prod_{\gamma \in X_1\Go} l_\gamma(x) d\sigma_{\Go}(L).
\end{equation*}
\end{prop}
 Where $L_c : \Lambda_{\Go}\to \Lambda_{\Go(c)}$ is the projection. To prove this formula, we need to increase the space $\Mcs_{\Go}(\Mo,L)$ to obtain complete twist flow and decompose the measures. We construct this space in the next paragraph using spaces of foliations.

\paragraph{Covering and decomposition of the measures:}
\label{para_decomp_measure}
Let $\Mo$ be a directed surface and $\Go$ be a directed stable graph in $\st(\Mo)$. There is a natural bundle over $\Mc(\Go)$ that corresponds to all the possible gluing's of surfaces in $\Mc(\Go)$. If $\Gao$ is a multi-curve that represents $\Go$, we consider the space $\MF_{\Gao}(\Mo)$ of directed foliations transverse to $\Gao$ (see paragraph \ref{gluing_coordinates}). This space carries an action of $\Stab(\Gao)$ the stabilizer of $\Gao$ under the action of the mapping class group $\Mod(M)$. It is possible to consider the quotient.
\begin{equation*}
 B\Mc(\Go)= \MF_{\Gao}(\Mo)/\Stab(\Gao).
\end{equation*}
It does not depend on the choice of $\Gao$, only of $\Go$. The reasons for this choice are the following:
\begin{itemize}
\item The space $\MF_{\Gao}(\Mo)$ contains, in a natural way $\Tc_{\Gao}(\Mo)$, by using the map $\So \to \lambdao_{\So}$ (see Proposition \ref{prop_ribbon_fol}),
\item It is possible to cut an element of $\MF_{\Gao}(\Mo)$ along curves in $\Gamma$, and we obtain an element of $\Tc(\Go)$, this induces a map:
\begin{equation*}
\cut_{\Go}: B\Mc(\Go) \longrightarrow \Mc(\Go).
\end{equation*}
\item Moreover, the twist flows along the curves in $\Gamma$ are complete in $\MF_\Gamma(\Mo)$, which was not the case of $\Tc_{\Gao}(\Mo)$, and this induces an orbifold torus action on $B\Mc(\Go)$.
\end{itemize}
This space is a piecewise linear orbifold with a natural measure $d\tilde{\mu}_{\Go}$ normalized by the set of integer points in the tangent space. Using the results of Proposition \ref{prop_cutting_measure_bundle}, the map
\begin{equation*}
 \cut_{\Go} : B\Mc(\Go)\longrightarrow \Mc(\Go),
\end{equation*}
is an $\R^{\#X_1\G}$ bundle, and for $\gamma \in \Gamma$ the Dehn twist $\delta_\gamma$ acts on the twist parameter by $t_\gamma(\delta_\gamma(S))=t_\gamma(S)+l_\gamma(S)$. And then $B\Tc(\Go)$ is a toric bundle over $\Mc(\Go)$.
\begin{lem}
\label{lem_decomp_measure_l}
The measures $d\tilde{\mu}_{\Go}$ and $d\mu_{\Go}$ satisfy the relation:
\begin{equation*}
(\cut_{\Go})_* d\tilde{\mu}_{\Go} = \prod_{\gamma\in X_1\G} l_\gamma ~~ d\mu_{\Go}
\end{equation*}
\end{lem}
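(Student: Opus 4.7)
The plan is to compute both sides in local twist coordinates and then descend to the quotient. Fix a point $[\lambda]\in B\Mc(\Go)$ represented by $\lambda\in\MF_{\Gao}(\Mo)$ with $\cut_\Gao(\lambda)=S\in\Tc(\Go)$. By the local coordinate lemma of paragraph \ref{gluing_coordinates} there is an open neighbourhood $U\subset \MF_{\Gao}(\Mo)$ of $\lambda$ and a piece-wise linear isomorphism
\begin{equation*}
    U \longrightarrow V\times \R^{\Gamma},\qquad \lambda'\longmapsto (\cut_\Gao(\lambda'),(t_\gamma(\lambda'))_{\gamma\in\Gamma}),
\end{equation*}
which identifies the lattice $U\cap \MS_\Z(M)$ with $V_\Z\times \Z^\Gamma$. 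Consequently the affine measure $d\tilde\nu_\Go$, normalised by its integral points, factorises on $U$ as
\begin{equation*}
    d\tilde\nu_\Go\big|_U \;=\; d\nu_\Go\big|_V \;\otimes\; \bigotimes_{\gamma\in\Gamma} dt_\gamma,
\end{equation*}
where $d\nu_\Go$ on $\Tc(\Go)$ is the product of the affine measures $d\nu_{\Go(c)}$ on each component. In particular, before taking any quotient, pushing forward by $\cut_\Gao$ would give the \emph{infinite} measure $d\nu_\Go\otimes (\text{Lebesgue on }\R^\Gamma)$; the factor $\prod_\gamma l_\gamma$ must therefore come from the quotient by $\Stab(\Gao)$.

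Next I analyse $\Stab(\Gao)\subset \Mod(\Mo)$. The key observation is that $\Stab(\Gao)$ sits in a short exact sequence
\begin{equation*}
    1 \longrightarrow \langle T_\gamma \mid \gamma\in\Gamma\rangle \longrightarrow \Stab(\Gao) \longrightarrow \prod_{c\in X_0\G}\Mod(\Go(c)) \longrightarrow 1,
\end{equation*}
where $T_\gamma$ is the Dehn twist about $\gamma$. In the local twist coordinates $(S,(t_\gamma))$, the twist $T_\gamma$ acts trivially on the base $V$ and by the translation $t_\gamma\mapsto t_\gamma+l_\gamma(S)$ on the fibre, because a single Dehn twist corresponds to gluing with an extra shear equal to the length of the curve. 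The residual factor $\prod_c\Mod(\Go(c))$ acts on $V$ by the mapping class group action that produces the moduli space $\Mc(\Go)$ in the quotient.

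Combining these two steps: after quotienting $U$ first by $\langle T_\gamma\rangle$ we obtain a bundle whose fibre over $S\in V$ is the torus $\prod_\gamma \R/l_\gamma(S)\Z$, carrying the Haar measure $\bigotimes dt_\gamma$ of total mass $\prod_\gamma l_\gamma(S)$; then the further quotient by $\prod_c\Mod(\Go(c))$ turns $V$ into a chart of $\Mc(\Go)$ without changing the fibrewise measure. Integrating out the fibres therefore yields
\begin{equation*}
    \cut_* d\tilde\nu_\Go \;=\; \Big(\prod_{\gamma\in X_1\G} l_\gamma\Big)\; d\nu_\Go,
\end{equation*}
which is a local identity. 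Since the local charts cover $B\Mc(\Go)$ and the measures and the cut map are canonically defined, this formula glues to a global identity, which is the claim.

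The main technical point is the verification that within $\Stab(\Gao)$ the only elements acting non-trivially on the twist coordinates are the Dehn twists $T_\gamma$, and that these act precisely by translation by $l_\gamma$; everything else acts on the base $V$ and disappears upon forming $\Mc(\Go)$. This is really the content of the lemma of paragraph \ref{gluing_coordinates} applied in the quotient together with a standard description of the stabiliser of a multi-curve in the mapping class group.
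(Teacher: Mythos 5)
Your argument is correct and is exactly the one the paper implicitly relies on: the paper states (without proof) that $\cut$ is a piece-wise linear torus bundle of rank $\#X_1\G$, and your unfolding via the local twist coordinates of paragraph \ref{gluing_coordinates}, the action of the Dehn twists $T_\gamma$ by $t_\gamma\mapsto t_\gamma+l_\gamma$, and the resulting fibrewise Haar measure of mass $\prod_\gamma l_\gamma$ is the standard Mirzakhani-type computation that fills in that gap. The only point to handle with slightly more care is that your short exact sequence for $\Stab(\Gao)$ ignores possible finite permutations of the curves and components; these are absorbed into the orbifold measures on both sides (and resurface as the $\#\Aut(\Go)$ factor in the subsequent integration formula), so they do not affect the identity.
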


\begin{proof}
This lemma is a consequence of Propositions \ref{prop_tangent_cut}, \ref{prop_cutting_measure_bundle} and Lemma \ref{lem_decompo_measure}. From the two first propositions, we have
\begin{equation*}
(\cut_{\Go})_* d\tilde{\mu}_{\Go}=\text{Vol}(\cut_{\Go}^{-1}(S))d\mu_{\Go}.
\end{equation*}
From the second, the map 
\begin{equation*}
 \cut_{\Go} : B\Mc(\Go)\longrightarrow \Mc(\Go),
\end{equation*}
is a $\R^{\#X_1\G}$ bundle, and for $\gamma \in \Gamma$ the Dehn twist $\delta_\gamma$ acts on the twist parameter by $t_\gamma(\delta_\gamma(S))=t_\gamma(S)+l_\gamma(S)$. And then $B\Tc(\Go)$ is a toric bundle over $\Mc(\Go)$. The fiber over a point is $\prod_\gamma \R/l_\gamma\Z$ with the Haar measure $\prod_\gamma dt_\gamma$. We conclude that
\begin{equation*}
\text{Vol}(\cut_{\Go}^{-1}(S))=\prod_\gamma l_\gamma(S).
\end{equation*}
\end{proof}
    For any stable graph and any positive function $F: \Lambda_{\Go}/\Aut(\Go) \longrightarrow \R_+$. We can consider the integral:
\begin{equation*}
\Vc_{\Go}(F)(L) = \int_{B\Mc(\Go,L)}F(L_{\Go}(S)) d\tilde{\mu}_{\Go}(L).
\end{equation*}
\begin{lem}
\label{lem_integral_formula_1}
For all $\Go$ stable graphs, the function $\Vc_{\Go}(F)(L)$ is given by:
\begin{equation*}
\Vc_{\Go}(F)(L) = \frac{1}{\#\Aut(\Go)|} \int_{x\in\Lambda_{\Go}(L)}F(x) \prod_{c\in X_0\G} V_{\Go(c)}(L_c(x)) \prod_{\gamma\in X_1\Go} l_\gamma(x) d\sigma_{\Go}(L).
\end{equation*}
\end{lem}
\begin{proof}
To prove Lemma \ref{lem_integral_formula_1}, we use  Lemma \ref{lem_decomp_measure_l} and Lemma \ref{lem_decompo_measure}.
\end{proof}

\paragraph{Proof of Proposition \ref{prop_integral_formula_2} and discussion:}
There is a canonical map:
\begin{equation*}
\Mcs_{\Go}(\Mo)\longrightarrow B\Mcs(\Go).
\end{equation*}
This map is not surjective, but Lemma \ref{lem_zero_measure} allows us to avoid this problem.
\begin{lem}
\label{lem_zero_measure}
The subset $\Mcs_{\Go}(\Mo)$ is of full measure in $B\Mcs(\Go)$, and this is also true for $\Mcs_{\Go}(\Mo,L)$ in $B\Mcs(\Go,L)$ for all $L\in \Lambda_{\Mo}$.
\end{lem}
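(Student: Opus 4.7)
The plan is to work on the cover $\MF_{\Gao}(\Mo)$ rather than directly on the quotient: since $B\Mcs(\Go)=\Stab(\Gao)\backslash\MF_{\Gao}(\Mo)$ and $\Mcs_{\Go}(\Mo)=\Stab(\Gao)\backslash\Tcs_{\Gao}(\Mo)=\Stab(\Gao)\backslash\MF_{\Gao}^*(\Mo)$ (using the bijection recalled in the ``Covering of admissible curves'' paragraph), and since the $\Stab(\Gao)$-action is piece-wise linear and preserves the integral lattice, it preserves the measure $d\tilde\nu_{\Go}$. So it suffices to show that $\MF_{\Gao}(\Mo)\setminus\MF_{\Gao}^*(\Mo)$ has $d\tilde\nu_{\Go}$-measure zero. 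Recall that $\MF_{\Gao}^*(\Mo)$ is characterized as the locus of abelian differentials with no horizontal saddle connection, equivalently with simple zeros, so the complement consists precisely of foliations carrying at least one horizontal saddle connection.

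The key step uses the gluing lemma stated just above: locally I have a piece-wise linear identification $\MF_{\Gao}(\Mo)\simeq \Tc(\Go)\times \R^{\Gamma}$ under which $d\tilde\nu_{\Go}$ becomes the product of the affine measure on $\Tc(\Go)$ and Lebesgue measure in the twist parameters $(t_\gamma)_{\gamma\in\Gamma}$. I would split potential horizontal saddle connections of $\lambda\in \MF_{\Gao}(\Mo)$ into two families. Connections disjoint from $\Gamma$ lie entirely in one component of $S_\Gamma=\cut_{\Gao}(\lambda)$, so their presence forces $S_\Gamma$ out of the open four-valent stratum $\Tcs(\Go)\subset \Tc(\Go)$; but $\Tcs(\Go)$ is the top cell of the piece-wise linear complex, so its complement is a countable union of strictly lower-dimensional cells and is $d\nu_{\Go}$-negligible. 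Connections crossing $\Gamma$ are labelled by a countable combinatorial datum (a homotopy class of arc between two zeros, together with its crossing pattern with the curves of $\Gamma$), and the condition of being realized as an actual horizontal trajectory is a single real linear equation on $\Tc(\Go)\times\R^{\Gamma}$ whose coefficient in the relevant $t_\gamma$ variables is nonzero. Hence each such class cuts out a codimension-one real affine subspace; a countable union of such is Lebesgue-negligible in $\R^{\Gamma}$, and by Fubini negligible in the product.

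For the statement on level sets, I would rerun the same argument after intersecting with a fiber of the boundary length map: the gluing coordinates restrict to $\MF_{\Gao}(\Mo,L)\simeq \Tc(\Go,L_0)\times \R^{\Gamma}$ for the compatible choices of $L_0\in\Lambda_{\Go}(L)$, the stratum $\Tcs(\Go,L_0)$ is still of full measure in $\Tc(\Go,L_0)$, and the linear equations from the crossing saddle connections keep a nonzero twist coefficient and hence still define genuine codimension-one slices inside each fiber. The main technical obstacle is the bookkeeping for the second family: one needs the countability of homotopy classes of simple arcs between the finitely many zeros on $M$ (which is a standard surface-topology statement) together with the fact that the linear form defining the realization condition is not identically zero on the fiber, which is where the nonvanishing coefficient in the twist direction matters; once these two points are verified, the measure-theoretic conclusion is a direct Fubini argument.
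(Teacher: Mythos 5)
Your proposal follows essentially the same route the paper sketches (and explicitly declines to write out in full): the complement consists of foliations carrying a saddle connection, and transversality to $\Gamma$ --- via the nonvanishing coefficient in the twist directions --- prevents any of the countably many saddle-connection conditions from holding on a set of positive measure, so the bad set is a countable union of codimension-one slices. Your write-up is in fact more detailed than the paper's two-sentence outline; the split into connections disjoint from versus crossing $\Gamma$, and the Fubini argument in the gluing coordinates, are a faithful elaboration of that outline.
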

\begin{proof}
We don't give a detailed proof of this lemma; it is based on the fact that foliations in $B\Mcs(\Go,L)\backslash \Mcs_{\Go}(\Mo)$ contain saddle connections. In the directed case, such saddle connections can be generic. But this phenomenon cannot happen if the foliation is transverse to a multi-curve. Then the only configurations of saddle connections that are possible are not generic, and the problem falls in some codimension-one sub-spaces. There is only a countable number of such sub-spaces, and then problems are in a null set\footnote{In the case of the acyclic gluing that we use, the situation is indeed much simpler because the gluing's always create multi-arcs; the space $\MF_{\Gao}(\Mo)$ is included in $\MA_\R(\Mo)$, because gluing cannot create a cycle.}. We refer to \cite{andersen2020kontsevich} for a more detailed proof.
\end{proof}
\begin{proof}[Proposition \ref{prop_integral_formula_2}]
    Using Lemma \ref{lem_zero_measure} and Lemma \ref{lem_integral_formula_1}, we obtain the relation:
\begin{equation*}
\Vc_{\Go}(F)(L)=\int_{\Mcs(\Mo,L)}N_{\Go}F(S)d\mu_{\Mo}(L).
\end{equation*}
Which concludes the proof of Proposition \ref{prop_integral_formula_2}.
\end{proof}

We end this part by a discussion on the case of other strata in the moduli spaces of directed metric ribbon graphs. For $\Mdo$ a decorated directed surface and $\Gdo\in \st(\Mdo)$, we can still consider the statistics of admissible multi-curves with a decoration that coincides with $\Gdo$. In this case we still have the integral formula:
\begin{equation*}
\int_{\Mc(\Mdo,L)}N_{\Gdo}F(S)d\mu_{\Mdo}(L)=\frac{1}{\#\Aut(\Gdo)}\int_{x\in \Lambda_{\Go}(L)}F(x) \prod_c V_{\Gdo(c)}(L_c(x)) \prod_\gamma l_\gamma(x) d\sigma_{\Go}(L).
\end{equation*}
And the decomposition of measures remains valid in this case.

\subsection{Recursion for volumes of moduli spaces}
\paragraph{A degenerated geometric recursion formula:}
We denote $P(\Mo)$ the set of directed stable graphs on $\Mo$ that correspond to bounded pairs of pants (see Theorem \ref{thm_bounded_pants}). There are four families of such stable graphs, which are represented in figure \ref{fig_pant_gluing} and are also given in the introduction:
\begin{enumerate}
 \item We remove a pants that contains two positive boundaries $i,j$.
 \item We remove a pants that contains a positive boundary $i$ and a negative boundary $j$.
 \item We remove a pants with one positive boundary $i$ that is connected to the surface by two negative boundaries, and do not disconnect the surface.
 \item We remove a pants with one positive boundary $i$, which is connected to the surface by two negative boundaries, and disconnect the surface into two connected components.
\end{enumerate}
Then we can rewrite Theorem \ref{thm_bounded_pants} in the following way: it is a kind of degeneration of the geometric recursion formula \cite{andersen2017geometric}. This formulation is straightforward to integrate over the moduli space $\Mc(\Mo)$ by using Proposition \ref{prop_integral_formula_2}.

\begin{prop}
\label{prop_mirzmacshane_1}
For all $S\in \Mcs_{g,n^+,n^-}(L)$, we have:
\begin{equation*}
2g-2+n^++n^-=
\sum_{\Go \in P(\Mo)} N_{\Go}(1)(S).
\end{equation*}
\end{prop}

\begin{proof}
Theorem \ref{thm_bounded_pants} can be stated in the following way: for each four-valent directed ribbon graph with at least two vertices, there is a bijection between the set of admissible multi-curves with stable graph in $P(\Mo)$ and the set of vertices of the graph. There is $2g-2+n^++n^-$ vertices, and then we obtain the formula because the right-hand side is the number of admissible primitive multi-curves with a directed stable graph in $ P(\Mo)$.
\end{proof}

\begin{figure}
    \centering
    \includegraphics[width=0.5\linewidth]{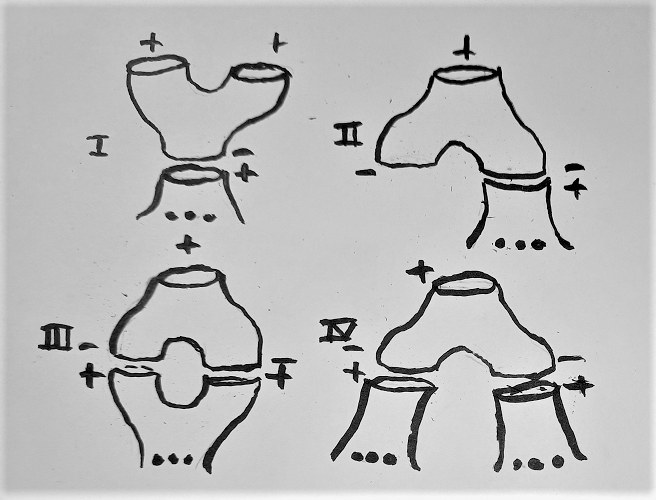}
    \caption{Different gluings that appear in the recursion.}
    \label{fig_pant_gluing}
\end{figure}

\paragraph{Recursion for volumes:}
In this part we prove Theorem \ref{intro_thm_recurrence_1} given in the introduction. We use the integral formula of Proposition \ref{prop_integral_formula_2} to integrate the formula obtained in Proposition \ref{prop_mirzmacshane_1}.
\begin{thm}
\label{thm_recurrence_1}
For all values of the boundary lengths $L=(L^+,L^-)$, the functions $V_{g,n^+,n^-}$ satisfy the recursion:
\begin{eqnarray*}
(2g-2+n)V_{g,n^+,n^-}(L^+|L^-)&=& \frac{1}{2}\sum_{i\neq j} (L_i^+~+L_j^+)~V_{g,n^+-1,n^-}(L_i^+ +L_j^+,L^+_{\{i,j\}^c}|L^-)\\
&+&\sum_{i}\sum_{j} [L_i^+-L_j^-]_+~V_{g,n^+,n^--1}([L_i^+-L_j^-]_+,L^+_{\{i\}^c}|L^-_{\{j\}^c})\\
&+&\frac{1}{2}\sum_{i} \int_0^{L_i^+} V_{g-1,n^++1,n^-}(x,L_i^+-x,L^+_{\{i\}^c}|L^-)~x(L_i^+-x)~ dx\\
&+&\frac{1}{2}\sum_{i}\sum_{\underset{I_1^{\pm}\sqcup I_2^{\pm}=I^{\pm}}{g_1+g_2=g}}^{'}  x_1 x_2 V_{g_1,n_1^++1,n^-_1}(x_1,L^+_{I_1^+}|L^-_{I_1^-})~ V_{g_2,n_2^++1,n^-_2}(x_2,L^+_{I_2^+}|L^-_{I_2^-});
\end{eqnarray*}
where we use the notation\footnote{Notations relative to indices are recalled in paragraph \ref{paragraph_notation_indices}.}
\begin{equation*}
x_l = |L_{I^{-}_l}^{-}|_1-|L_{I^{+}_l}^{+}|.
\end{equation*}
\end{thm}
The index $'$ above the sum means that we sum over all the stable pairs $(g_i,n^+_i,n^-_i)$ with $2g_i+n^+_i+n^-_i-2>0$.
\begin{proof}
To obtain Theorem \ref{thm_recurrence_1}, we multiply the formula of Proposition \ref{prop_mirzmacshane_1} by  $d\mu_{\Mo}(L)$ and integrate over the moduli space $\Mc_{g,n^+,n^-}(L)$. By applying Proposition \ref{prop_integral_formula_2}, we obtain the identity:
\begin{equation*}
(2g-2+n)V_{\Mo}= \sum_{\Go \in P(\Mo)} \Vc_{\Go}(1).
\end{equation*}
The covering of bounded pairs of pants splits into several coverings, which correspond to the four types of gluings with all the possible choices of boundaries. Now, from the results of the last section, we have
\begin{equation*}
\Vc_{\Go}(1)(L)=\frac{1}{\#\Aut(\Go)}\int_{x\in \Lambda_{\Go}(L)} \prod_{c} V_{\Go(c)}(L_c(x)) \prod_\gamma l_\gamma(x) d\sigma_{\Go}(L).
\end{equation*}
Then there is a distinguished component $c_0$ in $X_0\Go$ which is a pair of pants glued along its negative boundaries, which is either of type $(0,2,1)$ or $(0,1,2)$. The volumes $V_{0,2,1},V_{0,1,2}$  are constant, equal to one. The reason is that, there is only one directed graph on a directed pair of pants. Then this term disappears in the formulas. To finish the proof, it remains to compute the domain of integration in each case. Except in the case of type $III$ the directed stable graphs are trees. As we see in section \ref{paragraph_trees} in these cases the domain of integration  $\Lambda_{\Go}(L)$ is a point, then the integration is trivial, and we can obtain in this way the lines $1,2,4$ of the formula in Theorem \ref{thm_recurrence_1}. For the third line, we have $\Lambda_{\Go}(L)=\{(l_1,l_2)\in \Rp^2|l_1+l_2=L\}$.
\end{proof}

\paragraph{Case of marked points:}

If $\Mo$ has $m$ marked points that are not labeled, as we explain, a generic directed ribbon graph is then a quadrivalent graph with bivalent vertices at the marked points. In the case where the boundaries are labeled, we denote
\begin{equation}
V_{g,n^+,n^-,m}(L^+|L^-),
\end{equation}
the volume of the moduli space. We also denote $E:\Lambda_{n^+,n^-}\to \R$ the function 
\begin{equation*}
    E(L)=| L^+|_1=|L^-|_1.
\end{equation*}
Then, using the Theorem \ref{thm_acycl_curve} applied to the bivalent vertices, we can derive the following proposition: 
\begin{prop}
\label{prop_recurssion_marked_points}
The functions $V_{g,n^+,n^-,m}$ satisfy
\begin{equation*}
m V_{g,n^+,n^-,m}= E~V_{g,n^+,n^-,m-1},
\end{equation*}
then
\begin{equation*}
 V_{g,n^+,n^-,m} = \frac{E^m}{m!} V_{g,n^+,n^-}.
\end{equation*}
\end{prop}
\begin{proof}
This is also a consequence of Theorem \ref{thm_acycl_curve}. We can apply it to vertices of degree two. In this case, there is only one minimal directed ribbon graph; it corresponds to a ribbon graph on a cylinder with one vertex of degree two and only one edge. The recursion is the following:
\begin{equation*}
mV_{g,n^+,n^-,m} = \sum_i L_i^+ V_{g,n^+,n^-,m-1}.
\end{equation*}
Which gives a proof of Proposition \ref{prop_recurssion_marked_points}.
\end{proof}

\subsection{Graphical expansion and structure of the volumes}
\paragraph{Graphical expansion:}
\label{paragraph_graphical_expension}
As in the case of the topological recursion, the formula  in Theorem \ref{thm_recurrence_1} admits a graphical expansion. It can be obtained by iterating the recursion. Proposition \ref{prop_bounded_pants_decomp} gives canonical maximal acyclic multi-curves that decompose the ribbon graph, in the case of a generic directed ribbon graph, the decomposition is a pants decomposition.

\begin{prop}
\label{prop_graph_expension}
The volume $V_{g,n^+,n^-}$ satisfies:
\begin{equation*}
(2g-2+n)!~V_{g,n^+,n^-}(L)=\sum_{\Go}\frac{n_{\Go}}{\#\Aut(\Go)}\int_{x\in\Lambda_{\Go}(L)}\prod_\gamma l_\gamma(x) d\sigma_{\Go}(L)=\sum_{\Go\in \acyclos}n_{\Go} V_{\Go}(L).
\end{equation*}
The sum over all the directed acyclic pants decompositions of genus $g$ and with $n^+$ positives and $n^-$ negatives labeled boundaries and $n=n^++n^-$, $n_{\Go}$ is the number of linear orders on the acyclic stable graph.
\end{prop}

\begin{proof}
We can use the Theorem \ref{prop_bounded_pants_decomp}, there is a correspondence between enumeration of the vertices and multi-curves in $\MS(\Ro)$ with an acyclic stable graph and a linear order. If $\Ro$ is four-valent, a maximal acyclic curve is a pant decomposition? and then for each $S\in \Mcs(\Mo)$ we have:
\begin{equation*}
(2g-2+n^++n^-)!= \sum_{\Go\in \acyclos} n_{\Go} N_{\Go}(1)(S).
\end{equation*}
Then, by taking the integral over $\Mc(\Mo,L)$ and applying Proposition \ref{prop_integral_formula_2}, we obtain the formula:
\begin{equation*}
(2g-2+n^++n^-)!~V_{g,n^+,n^-}=\sum_{\Go} n_{\Go} V_{\Go}.
\end{equation*}
\end{proof}

\paragraph{Piece-wise polynomiality:}

In this part, we obtain result on piece-wise polynomiality for the function $V_{g,n^+,n^-}$. We could use the recursion, but we use results on acyclic directed stable graphs obtained in Theorem \ref{thm_polynomial_ZGo}. The structure of the volumes is similar to the one of double Hurwitz numbers studied in \cite{hahn2018wallcrossing}.

\begin{thm}
\label{thm_polynomial_Vgnn}
The volume $V_{g,n^+,n^-}(L^+|L^-)$ is an element of $\mathcal{P}_{n^+,n^-}$. It is a continuous piece-wise polynomial of degree $4g-3+n^++n^-$.
\end{thm}

\begin{proof}
 We have already got all the ingredients for the proof. From Theorem \ref{thm_polynomial_ZGo}, we know that the volumes $V_{\Go}$ are continuous piece-wise polynomials in $\mathcal{P}_{n^+,n^-}$ homogeneous of degree $4g-3+n^++n^-$. Moreover, by using Proposition \ref{prop_graph_expension}, we deduce Theorem \ref{thm_polynomial_ZGo} by linearity.
\end{proof}

\subsection{General formula for higher volumes}

Assumes that $\Mdo=(\Mo,\nu)$ is a decorated directed surface. We consider the volumes:
\begin{equation*}
V_{\Mdo}(L)=\int_{\Mcs(\Mdo,L)}d\mu_{\Mdo}(L).
\end{equation*}
In this part, we state the recursion in full generality using  Theorem \ref{thm_acycl_curve}. We plan to make it more explicit in future work. We define $\text{Bi}_i(\Mdo)$ as the set of decorated acyclic stable graphs $\Gdo$ with a height
\begin{equation*}
h~:~X_0\G\longrightarrow \{0,1\}.
\end{equation*}
A height is a function that preserves the order of the acyclic graph. We assume that $h$ satisfies the following conditions:
\begin{itemize}
\item $\Gdo\in \st(\Mdo)$.
\item $h$ is strictly increasing for the order relation on the graph.
\item There is a unique component $c_1$ with $l(c_1)=1$, $\Gdo(c_1)$ is minimal and $\nu(c_1)=(i)$.
\end{itemize}
The last condition says that there is a unique component at the top of the graph, it is minimal with a vertex of degree $2i+2$. It is glued along its negative boundaries (because it is at the top of an acyclic graph). Then, we can rewrite Theorem \ref{thm_acycl_curve} in the following way: For each $\So=(\Ro,m)$ and each $v\in X_0R$ of degree $2i+2$, there is a unique admissible curve $\Gado$ such that $\Gdo_{\Gado}\in \Bil_i(\Mdo)$. And $\Gado$ spares $v$ from the rest of the surface. Then we can derive the following corollary that generalizes Proposition \ref{prop_mirzmacshane_1}.

\begin{prop}
\label{prop_mirzmacshane_general}
For each $\Mdo$ and each $i$, we have for all $S\in \Mcs(\Mdo)$:
\begin{equation*}
\nu(i)=\sum_{\Gdo\in\text{Bi}_{i}(\Mdo)}N_{\Gdo}(1).
\end{equation*}
\end{prop}

And from this, using integration over the moduli space, we can obtain the following Theorem.

\begin{thm}
\label{thm_recurssion_general}
The volumes satisfy the recursion:
\begin{equation*}
\nu(i)V_{\Mdo}(L^+|L^-)=\sum_{\Gdo\in \text{Bi}_{i}(\Mdo)} \frac{1}{\#\Aut(\Gdo)} \int_{x\in\Lambda_{\Go}(L)} V_{\Gdo(1)}(L_{1}(x))V_{\Gdo(0)}(L_0(x))\prod_\gamma l_\gamma(x) d\sigma_{\Go}(L).
 \end{equation*}
\end{thm}

In this formula, $V_{\Gdo(1)}$ corresponds to the volume associated to the minimal component and $V_{\Gdo(0)}$ to the volume of all the other components. The formula gives a recursion that computes $V_{\Mdo}$ by removing vertices of degree $2i+2$, it allows to compute the functions $V_{\Mdo}$ using the volumes of minimal graphs only. The possible terms that appear in the formula for $i=2$ are given in figure \ref{fig:gluing_6}; in the case of $i=1$ we obtain all the ways to remove a directed pant, i.e., Theorem \ref{thm_recurrence_1}; and for $i=0$, we remove a cylinder with one marked point, which is Proposition \ref{prop_recurssion_marked_points}. As before, we can also obtain an expansion in terms of acyclic stable graphs. Let $\acyclodmax(\Mdo)$ be the set of decorated acyclic stable graphs on $\Mdo$, such that all components are minimal. This is also the set of maximal acyclic decompositions of $\Mdo$. An before $n_{\Go}$ denotes the number of linear orders. Let
\begin{equation*}
V_{\Gdo}(L)=\frac{1}{\#\Aut(\Gdo)}\int_{x\in\Lambda_{\Go}} \prod_c V_{\Gdo(c)}(L_c(x))\prod_\gamma l_\gamma d\sigma_{\Go}(L).
\end{equation*}
Then we have the following proposition.
\begin{prop}
We have the following relation
\begin{equation*}
n(\nu)! V_{\Mdo}(L)=\sum_{\Gdo\in \acyclodmax(\Mdo)}n_{\Go} V_{\Gdo}(L).
\end{equation*}
\end{prop}

\begin{figure}
\centering
\includegraphics[width=15cm]{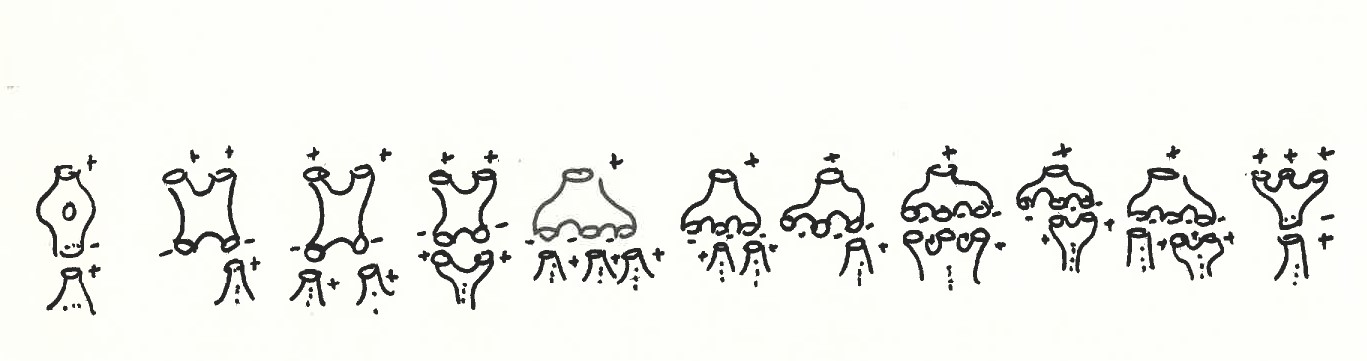}
\caption{Different gluings that appear when we remove a vertex of degree $6$.}
\label{fig:gluing_6}
\end{figure}

\newpage

\section{Surfaces with one negative boundary component:}

In this section, we study in more detail the case of surfaces with only one negative boundary. In this case, the recursion of Theorem \ref{thm_recurrence_1} takes a much simpler form, and it is possible to relate it to Cut-and-Join equations.

\subsection{Recursion for the volumes}
\label{subsection_one_bound}
First of all, if $n^-=1$, the first projection allows us to identify $\Lambda_{n^+,1}\simeq (\Rp)^{n_+}$,
 then it is possible to drop the negative boundaries and write $V_{g,n^+,1}(L^+|L^-)$ as a function of $L^+$. We write:
\begin{equation*}
\Vo_{g,n^+}(L)=V_{g,n^+,1}(L||L|_1),
\end{equation*}
where $\Vo_{g,n^+}(L)$ is a function on $\Rp^{n^+}$. We can see that the recursion of Theorem \ref{thm_recurrence_1} preserves the family of surfaces with one negative boundary component. Extracting a bounded pair of pants on a surface with only one negative boundary creates only surfaces of this type. Moreover, these gluings are necessarily non-separating and cannot be of type $II$. Then the recursion takes the following form:

\begin{thm}
\label{thm_recursion_1_bord}
Functions $\Vo_{g,n}$ are homogeneous polynomials of degree $4g-2+n$ and satisfy the following recursion:
\begin{eqnarray}
\label{formula_recurence_1_bord}
(2g+n-1)\Vo_{g,n}(L)&=& \frac{1}{2}\sum_{i\neq j} (L_i~+L_j)~\Vo_{g,n-1}(L_i +L_j,L_{\{i,j\}^c})\\
&+&\frac{1}{2}\sum_{i} \int_0^{l_i} \Vo_{g-1,n+1}(x,L_i-x,L_{\{i\}^c})~x(L_i-x)~ dx
\end{eqnarray}
\end{thm}
Using this theorem, we can deduce $\Vo_{g,n}(L)$ from the case $\Vo_{0,2}(L_1,L_2)=1$.
\begin{proof}
To prove Theorem \ref{thm_recursion_1_bord}, we remark that when we apply Theorem \ref{thm_bounded_pants} to a surface with only one boundary, surgeries that are allowed are necessarily of type $I$ or $III$; in other cases, a component of the surface must contain only positive boundary components, which is impossible. Reciprocally performing gluings of type I or III preserve the sub-family of surfaces with only one negative boundary component, then we can deduce the Formula \ref{formula_recurence_1_bord}. The form of the recursion preserves the space of polynomials, the first line increases the degree by one and preserves the homogeneity, the second increase the degree by $3$. In both cases, we have 
\begin{equation*}
(4g-2+n-1)+1=4g-2+n ~~~\text{and}~~~(4(g-1)-2+n+1) + 3 = 4g-2+n.
\end{equation*}
Then, by induction, we obtain that $\Vo_{g,n}$ is homogeneous of degree $4g-2+n$.
\end{proof}
\paragraph{String and dilaton equations:}
Functions $\Vo_{g,n}$ satisfy two series of equations that are similar to string and dilaton equations.
\begin{prop}
\label{prop_dilaton_F}
The volume $\Vo_{g,n+1}$ satisfies the following two relations:
\begin{itemize}
    \item \textit{String} equation: $~.\Vo_{g,n+1}(0,L)=|L|_1 \Vo_{g,n}(L)$
    \item \textit{Dilaton} equation: $~\frac{\partial \Vo_{g,n+1}}{\partial L_1}(0,L)= (2g+n-1) \Vo_{g,n}(L).$
\end{itemize}
\begin{equation*}
\frac{\partial \Vo_{g,n+1}}{\partial L_1}(0,L)= (2g+n-1) \Vo_{g,n}(L).
\end{equation*}
\end{prop}
\begin{proof}
    This \textit{string} equation is obtained by computing the volume of the space of a ribbon graph with only one edge in the first boundary component $(\alpha_1^+=1 )$. By looking at the contribution of ribbon graphs with at most two edges in the first boundary component, it is possible to compute the first order of $V_{g,n+1}$ at $L_1=0$, and obtain the \textit{dilaton} equation.
\end{proof}

\paragraph{Computation of $\Vo_{0,n}$:}
A particular case is one of the spheres. Rewriting the last formula, we obtain the following corrolary:
\begin{cor}
The volumes $\Vo_{0,n}(L)$ satisfy the recursion:
\begin{equation}
\label{formula_recurssion_1_bord_sphere}
(n-1) \Vo_{0,n}(L) = \sum_{i<j}(L_i+L_j)\Vo_{0,n-1}(L_i+L_j,L_{\{i,j\}}).
\end{equation}
And then they are given by: $\Vo_{0,n}(L)= |L|_1^{n-2}$.
\end{cor}
Then, in this case, $\Vo_{0,n}$ is one of the simplest homogeneous polynomials. 
\begin{proof}
The recursion is a consequence of Theorem \ref{thm_recursion_1_bord}; in the case of the sphere, gluings of type $III$ are not allowed, and then we obtain Formula \ref{formula_recurssion_1_bord_sphere}. To obtain the explicit formula, we can see that the recursion determines $\Vo_{0,n}(L)$ for all $n$ with the initial condition $\Vo_{0,2}=1$. Then, by a direct computation, we can see that the functions $|L|^{n-2}$ satisfy the recursion and also the initial condition.
\end{proof}
\begin{rem}
   For genus $0$, surfaces with only one positive boundary are also preserved by the recursion along positive boundaries. We have:
\begin{equation*}
(n-1) \Vo_{0,1,n}(L_1^+|L^-)= \frac{1}{2}\sum_{I_1^-,I_2^-} |L_{I_1}^-|_1 |L_{I_2}^-|_1\Vo_{0,1,n_1}(|L_{I_1}^-|_1|L_{I_1}^-)\Vo_{0,1,n_2}(|L_{I_2}^-|_1|L_{I_2}^-).
\end{equation*}
Using the relation $\Vo_{0,1,n}(|L|_1|L)=\Vo_{0,n,1}(L||L|_1)=\Vo_{0,n}(L)$, it is possible to obtain another recurrence relation for the polynomials $\Vo_{0,n}(L)$:
\begin{equation*}
(n-1) \Vo_{0,n}(L) = \frac{1}{2}\sum_{I_1,I_2}|L_{I_1}|_1 |L_{I_2}|_1\Vo_{0,n_1+1}(L_{I_1})\Vo_{0,n_1+1}(L_{I_2}).
\end{equation*}
\end{rem}

\paragraph{ Cut-and-Join equation:}

In this section, we investigate the recursion obtained for coefficients of the polynomials $\Vo_{g,n}(L)$. We can write
\begin{equation*}
\Vo_{g,n}(L)=\sum_{\alphab} c_{g,n}(\alphab)L^{\alphab},~~~\text{with}~~~L^{\alphab}=\prod_i L_i^{\alpha_i},
\end{equation*}
where $\alphab=(\alpha_1,...,\alpha_n)$
\begin{equation*}
d(\alphab)=4g-2+n~~~\text{and}~~~n(\alpha)=n.
\end{equation*}
Then it is possible to drop indices $(g,n)$ in the notation. By using Formula \ref{formula_recurence_1_bord}, we derive the following relation for the coefficients:
\begin{prop}
\label{prop_coef_c_1_bord}
The coefficients $(c(\alphab))_{\alphab}$ satisfy the following recursion:
\begin{eqnarray}
\label{formula_recurssion_c}
(2g-1+n)c(\alphab)&=& \frac{1}{2}\sum_{i\neq j}\frac{(\alpha_i+\alpha_j)!}{\alpha_i!\alpha_j!} c(\alpha_i+\alpha_j-1,\alphab_{\{i,j\}^c})\\
&+&\frac{1}{2}\sum_{i,x_1+x_2=\alpha_i-3} \frac{(x_1+1)!(x_2+1)!}{\alpha_i!} c(x_1,x_2,\alphab_{\{i\}^c}).
\end{eqnarray}
\end{prop}
The coefficients $c(\alphab)_{\alphab}$ satisfy an important symmetry; they are invariant under permutations. Then, as for the intersection numbers of the tautological class over the moduli space, it is possible to see $c$ as a function on the set of generalized partitions; we write $c(\mu)$ where $\mu=(\mu(0),\mu(1),...)$. Then we can consider the following formal series with infinitely many variables $\tb=(t_0,t_1,t_2,...)$
\begin{equation*}
\Zo(q,\tb)=\sum_\mu \frac{q^{\frac{d(\mu)+n(\mu)}{2}}\prod_i (i!) ^{\mu(i)}t_i^{\mu(i)}}{\prod_i \mu(i)!} c(\mu).
\end{equation*}
From Proposition \ref{prop_coef_c_1_bord}, we obtain the following result:
\begin{thm}
\label{cor_cutandjoin_1_bord}
The series $\Zo(q,\tb)$ satisfies the following equation:
\begin{equation}
\label{formula_cut_and_join_Zo}
\frac{\partial \Zo}{\partial q} = \frac{1}{2}\sum_{i,j}(i+j)t_it_j \partial_{i+j-1} \Zo+ \frac{1}{2}\sum_{i,j}(i+1)(j+1)t_{i+j+3} \partial_i\partial_j \Zo + \frac{t_0^2}{2},
\end{equation}
with $\Zo(0,\tb)=0 $.
\end{thm}
\begin{rem}[Dependance in $q$]
    The variables $q$ and $\tb$ are not independent; we have the relation
\begin{equation*}
\Zo(q,t)=\Zo(1,t(q))=\Zo(t(q)),~~~\text{with}~~~t_i(q)=q^{\frac{i+1}{2}}t_i.
\end{equation*}
And then, by taking the derivative in $q$ and evaluating at $q = 1$, we have:
\begin{equation*}
\frac{\partial \Zo}{\partial q}= \sum_i \frac{i+1}{2} t_i \partial_i \Zo.
\end{equation*}
Then we can obtain the following reformulation of Formula \ref{formula_cut_and_join_Zo}:
\begin{equation*}
\sum_i (i+1) t_i \partial_i\Zo = \sum_{i,j}(i+j)t_it_j \partial_{i+j-1} \Zo + \sum_{i+j}(i+1)(j+1)t_{i+j-3} \partial_i\partial_j \Zo + \frac{t_0^2}{2}.
\end{equation*}
\end{rem}

\subsection{Dual problem and Hurwitz number:}

Let ${\Ro}^{\nu}_{g,n^+,n^-}(\alphab^+|\alphab^-)$ be the number of isomorphism classes of directed ribbon graphs $\Ro$ of type $(g,n^+,n^-)$ such that:
\begin{itemize}
\item The orders of the vertices are prescribed by $\nu$ ($\nu_{\Ro}=\nu$).
\item The perimeter of the positive (resp. negative) boundary components is given by $\alphab^+$ (resp. $\alphab^-$).
\end{itemize}
We weight each ribbon graph $\Ro$ by $\frac{1}{\#\Aut(\Ro)}$ and we simply denote $\Ro_{g,n^+,n^-}(\alphab^+|\alphab^-)$ the number of generic directed ribbon graphs (with only quadrivalent vertices). On the other hand, we denote $h_{g,n^+,n^-}^\nu(\alphab^+|\alphab^-)$ the Hurwitz number of coverings over the sphere ramified over three points $(x_0,x_+,x_-)$ and such that:
\begin{itemize}
\item There is $\nu(i)$ ramification of order $i$ over $x_0$.
\item There are $n^-$ ramifications
over $x_-$ and $n^+$ over $x_+$ and they are labeled and of order $\alphab_i^{\pm}-1$.
\end{itemize}
As before, we weight a cover by the inverse of the size of its automorphism group. These covers are called dessins d'enfants and have been studied at many places, $h_{g,n^+,n^-}(\alphab^+|\alphab^-)$ correpsond to covers with simple ramifications over $x_0$.
\begin{lem}
\label{lem_hurwitz_graph_orient}
We have the following equality:
\begin{equation*}
{\Ro}^{\nu}_{g,n^+,n^-}(\alphab^+|\alphab^-)=h_{g,n^+,n^-}^{\nu}(\alphab^+|\alphab^-).
\end{equation*}
\end{lem}
We give a pictorial explanation in figure \ref{figure_oriented_dessin_denfants}.
\begin{figure}
\centering
\includegraphics[height=7cm]{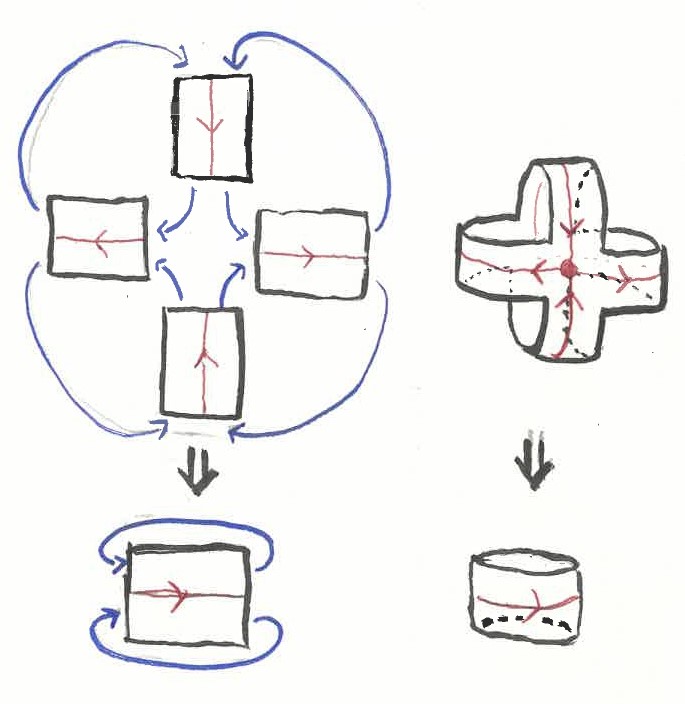}
\caption{From directed ribbon graphs to covers.}
\label{figure_oriented_dessin_denfants}
\end{figure}

\begin{proof}
Let $\Ro$ be a directed ribbon graph according to construction \ref{zip_rect_RG} $\Ro$ is made by gluing rectangles $\mathbf{R}_e^{\bullet}$, where $e\in X^+\Ro$ is a positive half edge. From Proposition  \ref{prop_coord_x} we have a canonical one form $\omega_R$ given locally by $dz$ on each $\mathbf{R}_e$. If we fix a vertex $v\in X_0R$ of the graph, the period map:
\begin{equation*}
z\longrightarrow \int_v^z \omega_R,
\end{equation*}
is well defined on the universal cover of $M_R^{\bullet}$ and the image of the fundamental group is contained in $\Z\subset \C$, then by taking the quotient we have a well defined map:
\begin{equation*}
M_R^{\bullet} \longrightarrow \C/\Z.
\end{equation*}
The target is an infinite cylinder (sphere with two marked points). Under this map, $\omega_R$ is given by the pullback of $\omega_0=dz$ on $\C/\Z$. When we pull back a differential $z^ldz$ by the map $\phi =z^k$, we obtain $\phi^*w=kz^{(l+1)k-1}dz$; moreover, if $l=-1$, we also have $\Res_0 \phi^*w =k\Res_0 w$. According to this, a vertex of degree $2i+2$ is mapped to $0\in \C/\Z$ and corresponds to a zero of degree $i$ of $\omega_R$. Then we see that the map $\phi_R$ is ramified of degree $i$ at this point. We can see $\C/\Z$ as a sphere with two removed points at $x_+,x_-$ and $\omega_0$ as two simple poles at these points; under our assumption, the positive boundary components are mapped to the pole at $x_+$ (resp. $x_-$). Their preimages are simple poles of $w_R$, and the absolute value of the residue at a pole corresponds to the length of the boundary component; it is also equal to the number of edges that it contains. On the other hand, for all coverings ramified over $x_0,x_-,x_+$, it is possible to obtain a ribbon graph on the surface by looking at the pre-image of the circle based at $x_0$; the orientation on the circle induces an orientation on the graph (see figure \ref{figure_oriented_dessin_denfants}).
\end{proof}

Then directed generic ribbon graphs with one negative boundary correspond to dessins d'enfants with a maximal ramification over $x^-$ and $2g-2+n$ simple ramifications over $0$. Let $h_{g,n^+,1}(\alphab^+)$ be the corresponding Hurwitz number. We assume that the ramifications over the first point are labeled. The last lemma and an explicit computation of the volumes in this case allow us to write the following formula:

\begin{cor}
\label{cor_Vo_hurwitz}
The volumes $\Vo_{g,n}$ are polynomials that are naturally related to Hurwitz numbers in the following way:
\begin{equation*}
\Vo_{g,n}(x_1,....,x_n) = \sum_{\alphab} h_{g,n,1}(\alphab)\prod_i \frac{x_i^{\alpha_i-1}}{(\alpha_i-1)!}.
\end{equation*}
\end{cor}

\begin{proof}
To prove Corrolary \ref{cor_Vo_hurwitz}, we compute the volume $V_{\Ro}(L)$ associated with a directed ribbon graph with a single negative boundary component. This is an integral over some affine subspace in $\Met(R)$. The linear relations on $\Met(R)$ are given by $L_i^{+}(m)=L_i$, for each $i=1,...,n^+$. As the graph is directed, the dual is bipartite, and then an edge of the graph appears in exactly one of these equations with a weight equal to $1$. In other words, there is an identification:
\begin{equation*}
\Met(\Ro,L,|L|_1) =\prod_{i} \{(m_e)_{[e]_2=\beta^+_i}|\sum_e m_e=L_{i}\}.
\end{equation*}
Then the volume is equal to the volume of a product of simplices. Each factor is equipped with the affine measure and
\begin{equation*}
\int_{\sum_1^n y_j=x}d\sigma= \frac{x^{n-1}}{(n-1)!}.
\end{equation*}
Then, the affine volume of $ \Met(\Ro,L,|L|)$ is
\begin{equation*}
\prod_i \frac{L_i^{\alpha_i^{+}(\Ro)-1}}{(\alpha_i^{+}(\Ro)-1)!},
\end{equation*}
where $\alpha_i^{+}(\Ro)$ is the number of edges in the boundary $\beta_i^+$. By summing the contribution of all directed generic ribbon graphs, we obtain:
\begin{equation*}
\Vo_{g,n}= \sum_{\Ro} V_{\Ro}=\sum_{\Ro} \frac{1}{\#\Aut(\Ro)} \frac{L_i^{\alpha_i^+(\Ro)-1}}{(\alpha_i^+(\Ro)-1)!}.
\end{equation*}
The coefficient in front of $ \prod_i \frac{L_i^{\alpha_i-1}}{(\alpha_i-1)!}$ is the number of quadrivalent ribbon graphs with $\alpha_i$ edges on the $i-$ieme positive boundary components, counted with automorphisms. By using Lemma \ref{lem_hurwitz_graph_orient}, we can conclude the proof of Corrolary \ref{cor_Vo_hurwitz}
\end{proof}

\newpage

\appendix

\section{Vocabulary and notations:}
\paragraph{Multi-graphs:}
\label{paragraph_graph}
In this text, we consider multi-graphs (see \cite{bollobas2012graph}), we generally call them graphs by abuse of language. A multi-graph $G$ can be defined by data $(X_0G,XG,(XG(c))_{c\in X_0G},s_1)$. Where $XG$ is the set of half edges, $X_0G$ is the set of vertices, $(XG(c))_{c\in X_0G}$ is a partition of $XG$, indexed by the vertices, and $s_1$ is an involution that encodes how to glue two half edges together. A multi-graph can have multiple edges, loops, and also legs, which are the fixed points of $s_1$. The quotient $X_1G=XG/\langle s_1\rangle$ corresponds to the set of edges of the graph. We denote $\partial G$ the legs and 
 $X^{int}G$ is the set of internal edges, which are the orbits of order $2$. For each $e\in XG$ we denote $[e]_0$ and $[e]_1$ the projections on $X_0G$ and $X_1G$. Two graphs $G,G'$ are isomorphic iff there is a bijection $\phi: XG\rightarrow XG'$ that preserves the two partitions and satisfies $s_1'\circ \phi=\phi\circ s_1$. We always assume that an automorphism fix the legs of the graph.\\

If $G$ is a graph and $E$ a subset of $X_1^{int}G$, we can define $G_{\langle E\rangle}$ the quotient, by identifying two vertices that are joined by an edge in $E$ and removing these edges. \footnote{Formally, let $\tilde{E}\subset XG$ the subset such that $s_1(\tilde{E})=\tilde{E}$ and $\tilde{E}/\langle s_1\rangle=E$. We consider the equivalence relation $\sim_E$ on $X_0G$ generated by the following symmetric relation:
\begin{equation*}
    c_1\sim_E' c_2 \Leftrightarrow \exists~
 e_1,e_2\in \tilde{E}~\text{such as}~ e_i \in XG(c_i)~~\text{and}~~e_1=s_1(e_2).
\end{equation*}
Then, we denote $X_0G_{\langle E\rangle }=X_0G/\sim_E$ the quotient. For each $c\in X_0G_{\langle E\rangle }$ we consider the subset  $XG_{\langle E\rangle }(c)=\sqcup_{c'\sim_{ E } c} XG(c')\backslash \tilde{E}$, and we take the restriction of $s_1$ to $XG_{\langle E\rangle}=XG \backslash \tilde{E}$.}\\

For $E\subset X_1G$ we can also define the graph $G_E$ by removing the edges in $E$ and deleting the vertices with no edges or half edges.

\paragraph{Measure on affine spaces:}
\label{paragraph_measure_convex}
We generally consider a vector space $V$ of dimension $n$, and  a convex polytope $X\subset V$. A polytope is a subspace defined by a finite number of linear inequalities:
\begin{equation*}
    X=\bigcap_{i=1}^r\{x\in~V~,~l_i(x)\ge b_i\}.
\end{equation*}
With $l_i\in V^*$ and $b_i\in\R$ for all $i$, a polytope is open if the inequalities are strict. We recommend \cite{barvinok2008integer}  for an introduction to the subject. We often have a lattice $V(\Z)$ in $V$ that we call ``integer points''. Then we can define the Lebesgue measure\footnote{If $X$ is relatively compact in $V$. The volume of $X$ is related to the asymptotic behavior of the number of integral points in $t\cdot X$ when $t$ tends to $\infty$} on $V$ normalized by $V(\Z)$. This measure is given by the pullback of the usual Lebesgue measure under any linear isomorphism $\phi~:~V\to \R^n$, such that $\phi(V(\Z))=\Z^n$.\\ 

We use fibrations, which are locally given by linear maps between convex polytopes. The following lemma is useful at many places in the text and is a basic result of linear algebra.
\begin{lem}
\label{lem_decompo_measure}
Assume that $V_i$ for $i=1,2,3$ are vector spaces with lattices of integer points $V_i(\Z)$. Let $d\sigma_i$ the Lebesgue measure be normalized by $V_i(\Z)$. If $A~:~V_2\longrightarrow V_1$ is a linear map, which induces an exact sequence:
    \begin{equation*}
        \{0\}\longrightarrow V_3(\Z)\longrightarrow V_2(\Z) \longrightarrow V_1(\Z)\longrightarrow \{0\}.
    \end{equation*}
Then the measure $d\sigma_3$ is the conditional measure of $d\sigma_2$ with respect to $d\sigma_1$
\end{lem}
This lemma implies the following fact: if $X_2\subset V_2$ and $X_1\subset V_1$ are polytopes, and $A: X_2\longrightarrow X_1$ is linear such that:
\begin{itemize}
    \item $A(V_2(\Z))=V_1(\Z)$,
    \item $\ker(A)\cap V_2(\Z)$ is a lattice in $\ker(A)$.
\end{itemize}
For $y\in X_1$, let $X_2(y)=A^{-1}(\{y\})$ be the fiber on $y$, and $d\sigma_3(y)$ be the measure on $X_2(y)$ normalized by $\ker(A)\cap V_2(\Z)$. Then for all integrable functions $f$ we have
\begin{equation*}
\label{formule_decomposition_general}
    \int_{X_2} fd\sigma_2=\int_{X_1}\int_{X_2(y)}fd\sigma_3(y)d\sigma_1.
\end{equation*}
This formula also works for maps between spaces that are cells complexes, such that cells are polytopes.

\paragraph{Notations on indices:}
\label{paragraph_notation_indices}
We also use several notations relative to indices. Let $I$ be a
totally ordered set and $L\in \R^{I}$ be a vector. For each subset $J\subset I$ we use the notation $L_{J}=(L_i)_{i\in J}$, ordered according to the order induced on $J$ by $I$ (in many cases, we are considering symmetric functions, so the order is sometimes irrelevant). When there is no possible confusion on $I$, for each $J\subset I$, we denote $L_{J^c}$ the vector $L_{I\backslash J}$. The notation 
\begin{equation*}
    |L|_1=\sum_i |L_i|,
\end{equation*}
is also used. For each $n^+,n^-\in \Npp$ we consider
\begin{equation}
\label{formula_Lambda_1}
    \Lambda_{n^+,n^-}=\{(L^+,L^-)\in \Rp^{n^+}\times\Rp^{n^-}~,~|L^+|_1=|L^-|_1 \},
\end{equation}
and denote $d\sigma_{n^+,n^-}$ the measure on $\Lambda_{n^+,n^-}$, proportional to the Lebesgue measure and normalized by the set of integer points. We have two projections:
\begin{equation*}
L_{\pm}~:~\Lambda_{n^+,n^-}\longrightarrow \Rp^{n^+}.
\end{equation*}
We also denote $E:\Lambda_{n^+,n^-}\to \Rp$ the function 
\begin{equation*}
    E(L)=|L^+|_1=|L^-|_1.
\end{equation*}

\section{Case of non-dirigible ribbon graphs:}

When we consider non-dirigible ribbon graphs with at least one vertex of even degree, we still have degeneration of the symplectic structure, and we can also find canonical curves that spare this vertex from the rest of the surface.

\begin{thm}
If $R$ is any ribbon graph and $v$ a vertex of even degree, there are exactly two admissible multi-curves $\Gamma^{\pm}_v$ such that:
\begin{itemize}
\item $\Gamma^{\pm}_v$ spares $v$ from the rest of the surface.
\item The component $R_v^\pm$ is dirigible and admits a direction such that it is glued along it's negative boundaries.
\item All curves in $\Gamma^{\pm}_v$ are boundaries of $R_v^\pm$.
\end{itemize}
\end{thm}

This theorem contains the case of the theorem \ref{thm_acycl_curve}.

\begin{rem}
In this case, the group of automorphisms of the surface can eventually exchange the two multi-curves $\Gamma^{\pm}_v$. This happens, for instance, for the torus with one boundary.
\end{rem}

\newpage

\bibliography{biblio}
\bibliographystyle{alpha}
\end{document}